\NeedsTeXFormat{LaTeX2e}
\documentclass[11pt,twoside,reqno]{amsart}
\usepackage{amsmath, amsfonts, amsthm, amssymb, graphicx, cite}

\usepackage{hyperref}
\usepackage{esint}
\usepackage{mathrsfs}
\usepackage{tikz}
\usetikzlibrary{matrix,arrows,decorations.pathmorphing}
\usepackage{setspace}
\usepackage{epigraph}

\usepackage{mathtools}
\usepackage{tikz,tikz-cd}
\usetikzlibrary{matrix,arrows,decorations.pathmorphing}

\textwidth=15.15cm \textheight=21.0cm
\hoffset=-1.25cm \voffset=-0.5cm

\newcommand{\R}{\mathbb{R}}

\newcommand{\N}{\mathbb{N}}



\DeclarePairedDelimiter\abs{\lvert}{\rvert}

\DeclarePairedDelimiter\cbrac{\{}{\}}
\DeclarePairedDelimiter\sqbrac{[}{]}
\DeclarePairedDelimiter\brac{(}{)}

\theoremstyle{plain}
\newtheorem{theorem}{Theorem}[section]
\newtheorem{lemma}{Lemma}[section]
\newtheorem{proposition}{Proposition}[section]

\theoremstyle{problem}

\theoremstyle{definition}
\newtheorem{definition}{Definition}[section]

\theoremstyle{remark}
\newtheorem{remark}{Remark}[section]

\numberwithin{equation}{section}

\begin{document}

\title[Stability of Steady Multi-Wave Configurations for the Full Euler Equations]
{Stability of Steady Multi-Wave Configurations for the Full Euler Equations of Compressible Fluid Flow}

\author
{Gui-Qiang G. Chen \qquad Matthew Rigby}
\address{Gui-Qiang G. Chen,
Mathematical Institute, University of Oxford,
Oxford, OX2 6GG, UK}
\email{chengq@maths.ox.ac.uk}
\address{Matthew Rigby,
Mathematical Institute, University of Oxford,
Oxford, OX2 6GG, UK}
\email{rigby@maths.ox.ac.uk}

\dedicatory{{\it In memoriam} Professor Xiaqi Ding}

\keywords{Stability, multi-wave configuration, vortex sheet, entropy wave, shock wave,
BV perturbation, full Euler equations, steady, wave interactions, Glimm scheme}
\subjclass[2000]{Primary: 35L03, 35L67, 35L65,35B35,35Q31,76N15;
Secondary: 76L05, 35B30, 35Q35}
\date{\today}
\thanks{}

\begin{abstract}
We are concerned with the stability of steady multi-wave configurations for the full Euler equations
of compressible fluid flow. In this paper, we focus on the stability of steady four-wave configurations
that are the solutions of the Riemann problem in the flow direction,
consisting of two
shocks, one vortex sheet, and one entropy wave,
which is one of the core multi-wave configurations for
the two-dimensional Euler equations.
It is proved that such steady four-wave configurations in supersonic flow are stable
in structure globally, even under the BV perturbation of the incoming flow in the flow direction.
In order to achieve this, we first formulate the problem as
the Cauchy problem (initial value problem) in the flow direction,
and then develop a modified Glimm difference scheme and
identify a Glimm-type functional to obtain the required BV estimates by tracing the interactions
not only between the
strong shocks and weak waves, but also between the strong vortex sheet/entropy wave and weak waves.
The key feature of the Euler equations is that the reflection coefficient is always less than $1$,
when a weak wave of different family interacts with
the strong vortex sheet/entropy wave or the shock wave,
which is crucial to guarantee that the Glimm functional is decreasing.
Then these
estimates are employed to establish the convergence of the approximate solutions to a global entropy
solution, close to the background solution of steady four-wave configuration.
\end{abstract}
\maketitle

\section{Introduction}
We are concerned with the stability of steady multi-wave configurations for the two-dimensional steady full Euler equations
of compressible fluid flow governed by
\begin{equation} \label{eq:euler}
\begin{cases} (\rho u)_x + (\rho v)_y = 0, \\
		(\rho u^2 + p)_x + (\rho u v )_y = 0, \\
		(\rho u v)_x + (\rho v^2 + p)_y = 0, \\
		(\rho u (E + \frac{p}{\rho}))_x + (\rho v(E + \frac{p}{\rho}))_y = 0,
\end{cases}
\end{equation}
where $(u,v)$ is the velocity, $\rho$ the density, $p$ the scalar pressure, and
$
E = \frac 12 (u^2 + v^2) + e(p,\rho)
$
the total energy, with internal energy $e$ that is a given function of $(p,\rho)$ defined through thermodynamic relations.
The other two thermodynamic variables are the temperature $T$ and the entropy $S$.
If $(\rho,S)$ are chosen as two independent variables, then the constitutive relations become
\begin{equation} \label{eq:epT}
	(e,p,T) = (e(\rho,S), p(\rho,S), T(\rho,S)),
\end{equation}
governed by
\begin{equation} \label{eq:governed}
	T dS = de - \frac{p}{\rho^2} \, d \rho.
\end{equation}
For an ideal gas,
\begin{equation} \label{eq:idealgas}
	p = R\rho T, \qquad e = c_v T, \qquad \gamma = 1 + \frac{R}{c_v} > 1,
\end{equation}
and
\begin{equation} \label{eq:pressureandenergy}
p = p(\rho,S) = \kappa \rho^{\gamma} e^{\frac{S}{c_v}},
\qquad e = \frac{\kappa}{\gamma-1} \rho^{\gamma -1} e^{\frac{S}{c_v}}
= \frac{RT}{\gamma-1},
\end{equation}
where $R, \kappa$, and $c_v$ are all positive constants.
The quantity
\[
c = \sqrt{p_{\rho}(\rho,S)} = \sqrt{\frac{\gamma p}{\rho}}
\]
is defined as the sonic speed.

In this paper, we focus on the stability of steady four-wave configurations
in the two space-dimensional case,
consisting of two shocks, one vortex sheet, and one entropy wave,
which are the solutions of the Riemann
problem in the flow direction; see Figure 1.
In this configuration, the vortex sheet and the entropy wave coincide in the Euler coordinates.
This is one of the fundamental core multi-wave configurations, as a solution of the
standard steady Riemann problem
for the two-dimensional Euler equations:

\smallskip
(i) For supersonic flow, there are at most eight waves (shocks, vortex sheets, entropy waves, rarefaction waves) that
emanate from one single point in the Euler coordinates,
which consist of one solution (at most four of these waves) of the Riemann problem in the flow direction
and the other solution (at most four of these waves) of the other Riemann problem in the opposite direction,
while the later Riemann problem can also be reduced into the standard Riemann problem in the flow direction
by the coordinate transformation $(x,y)\to (-x,-y)$ and the velocity transformation $(u,v)\to (-u,-v)$, which
are invariant for the Euler equations \eqref{eq:euler}.

\smallskip
(ii) Vortex sheets and entropy waves are new key fundamental waves in the multidimensional case,
which are normally very sensitive in terms of perturbations as observed in numerical simulations
and physical experiments ({\it cf.} \cite{AM87,AM89,Chen17,cf-book2014shockreflection,CW2012}).

\smallskip
(iii) Such solutions are fundamental configurations for the local structure of general
entropy solutions, which play an essential role in the mathematical theory of
hyperbolic conservation laws
({\it cf.} \cite{bressan,CH,Chen88,cf-book2014shockreflection,dafermos,Ding1,Ding2,Ding3,glimm,Liu,smoller}).

\begin{figure}[h!]
	\includegraphics[scale=0.8]{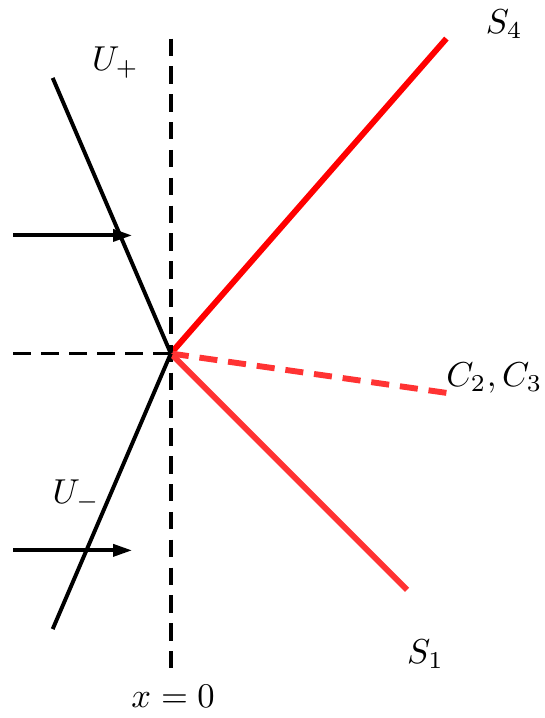}
	\centering
	\caption{An unperturbed four-wave configuration, consisting of two shocks $S_1$ and $S_4$, one vortex sheet $C_2$, and one entropy wave $C_3$}
\end{figure}

The stability problem involving supersonic flows with a single shock past a Lipschitz wedge
has been solved in Chen-Zhang-Zhu \cite{shockwedges} (also see Chen-Li \cite{ChenLi2008}).
The stability problem involving supersonic flows with vortex sheets and entropy waves
over a Lipschitz wall has been solved in Chen-Zhang-Zhu \cite{compvortex}.
See also Chen-Kuang-Zhang \cite{ChenKuangZhang17}
for the stability of two-dimensional steady supersonic exothermically reacting Euler flow past
Lipschitz bending walls.

The case of an initial configuration involving two shocks is treated in \cite{lewicka},
by using the method of front tracking,
for more general equations, under the finiteness and stability conditions.
We think that, with the estimates of Riemann solutions involving more than two strong waves,
the estimates on the reflection coefficients of wave interactions
should play a similar role so that the method of front tracking may be used.

In this paper, it is proved that steady four-wave configurations in supersonic flow are stable
in structure globally, even under the BV perturbation of the incoming flow in the flow direction.
In order to achieve this, we first formulate the problem as
the Cauchy problem (initial value problem) in the flow direction, then develop a modified Glimm difference scheme
similar to those in \cite{shockwedges,compvortex} from the original Glimm scheme in \cite{glimm}
for one-dimensional hyperbolic conservation laws, and further
identify a Glimm-type functional to obtain the required BV estimates by tracing the interactions
not only between the
strong shocks and weak waves, but also between the strong vortex sheet/entropy wave and weak waves carefully.
The key feature of the Euler equations is that the reflection coefficient is always less than $1$,
when a weak wave of different family interacts with the vortextsheets/entropy wave or the shock wave,
which is crucial to guarantee that the Glimm functional is decreasing.
Then these
estimates are employed to establish the convergence of the approximate solutions to a global entropy
solution, close to the background solution of steady four-wave configuration.

This paper is organized as follows: In \S 2,  we first formulate the stability of multi-wave configurations as
the Cauchy problem (initial value problem) in the flow direction for the Euler equations (\ref{eq:euler})
and then state the main theorem of this paper.
In \S 3, some fundamental properties of system \eqref{eq:euler} and the analysis of
the Riemann solutions are presented, which are used in the subsequent sections.
In \S 4,  we make estimates on the wave interactions, especially between the strong
and weak waves, and identify the key feature of the Euler equations that
the reflection coefficient is always less than $1$,
when a weak wave of different family interacts with
the vortex sheet/entropy wave or the shock wave.
In \S 5, we develop a modified Glimm difference scheme, based on the ones in \cite{shockwedges,compvortex},
to construct a family of approximate solutions, and establish necessary estimates that will
be used later to obtain its convergence to an entropy solution of the Cauchy problem \eqref{eq:euler} and \eqref{eq:initdata}.
In \S 6, we show the convergence of the approximate solutions to an entropy solution,
close to the background solution of steady four-wave configuration.

\section{Formulation of the Problem and  Main Theorem}

In this section, we formulate the stability problem for the steady four-wave configurations as
the Cauchy problem (initial value problem) in the flow direction for the Euler equations (\ref{eq:euler})
and then state the main theorem of this paper.

\subsection{Stability problem}

We focus on the stability problem of the four-wave configurations consisting of two strong shocks, one strong vortex sheet, and one entropy wave
for the supersonic Euler flows governed by system (\ref{eq:euler}) for $U = (u,v,p,\rho)$.
More precisely, we consider a background solution $\overline U: \R^2_+ \rightarrow \R$ that consists of four constant states:
\begin{align*} \label{eq:backgroundsoln}
	&U_b = (u_b,v_b, p_b, \rho_b), \\
	&U_{m1} = (u_{m1},0,p_{m1},\rho_{m1}), \\
	&U_{m2} = (u_{m2},0,p_{m2},\rho_{m2}), \\
	&U_a = (u_a,v_a,p_a,\rho_a),
\end{align*}
where $u_j > c_j$ for all $j \in \{a,m_1,m_2,b \}$ with the sonic speed  of state $U_j$:
$$
c_j = \sqrt{\frac{\gamma p_j}{\rho_j}},
$$
and state $U_{m1}$ connects to $U_b$ by a strong $1$--shock of speed $\sigma_{10}$,
$U_{m1}$ connects to $U_{m2}$ by a strong $2$--vortex sheet and a strong $3$--entropy wave
of strengths $(\sigma_{20},\sigma_{30})$, and $U_{a}$ connects to $U_{m2}$ by a strong $4$--shock
of speed $\sigma_{40}$; see Figure \ref{fig:backgroundsoln}.

\begin{figure}[h!]
	\includegraphics{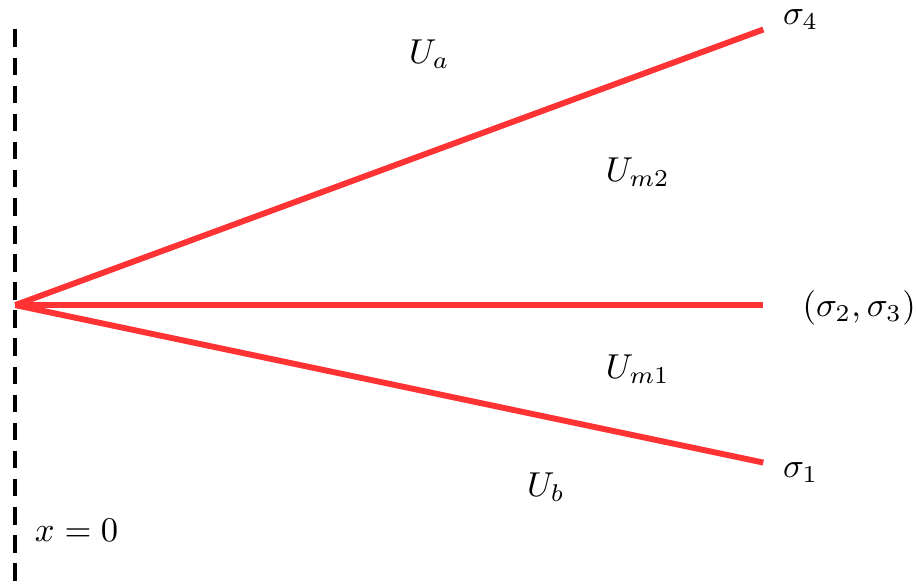}
	\centering
	\caption{The background solution $\overline{U}$, consisting of four waves and four constant states}
	\label{fig:backgroundsoln}
\end{figure}

We are interested in the stability of the background solution $\overline{U}$
of steady four-wave configuration, under small $BV$ perturbations of
the incoming flow as the initial data,
to see whether it leads to entropy solutions containing similar strong four-wave configurations,
close to the background solution $\overline{U}$.
That is, the stability problem can be formulated as the following
Cauchy problem (initial value problem) for the Euler equations \eqref{eq:euler}
with the Cauchy data:
\begin{equation}\label{eq:initdata}
	\left. U \right| _{x=0} = U_0,
\end{equation}
where  $U_0 \in \mathrm{BV}(\R)$
is a small perturbation function close to $\overline U(\cdot,0)$ in $BV$.

The main theorem of this paper is the following:

\begin{theorem}[Existence and Stability]\label{thm:mainthm}
There exist $\epsilon> 0$ and $C>0$ such that,
if $U_0 \in BV(\R)$ satisfies
\[
\mathrm{TV}\{U_0(\cdot)  - \overline U(0,\cdot)\}< \epsilon,
\]
then there are four functions{\rm :}
\[
U \in BV_{loc}(\R^2_+) \cap L^\infty(\R^2_+), \qquad\, \chi_i \in \mathrm{Lip}(\R_+;\R_+) \quad\mbox{for $i =1,2,3,4$,}
\]
such that
\begin{enumerate}
\item[\rm (i)] $U$ is a global entropy solution of system \eqref{eq:euler} in $\R^2_+$,
  satisfying the initial condition \eqref{eq:initdata}{\rm ;}
\item[\rm (ii)] Curves $\{ y = \chi_i(x) \}$, $i=1, 2, 3, 4$, are a strong $1$--shock, a combined strong $2$--vortex sheet and $3$--entropy wave ($\chi_{2,3}:=\chi_2=\chi_3$),
and a strong $4$--shock, respectively, all emanating from the origin, with
\begin{align*}
		&\abs*{U(x,y) |_{\{y < \chi_1(x)\}} - U_b}  < C \epsilon,  \\
		&\abs*{U(x,y) |_{\{\chi_{1}(x) <  y < \chi_{2,3}(x)\}} - U_{m1} }  < C \epsilon,  \\
		&\abs*{U(x,y) |_{\{\chi_{2,3}(x) <  y < \chi_4(x)\}} - U_{m2}}   < C \epsilon, \\
		&\abs*{U(x,y) |_{\{\chi_4(x) < y\}} - U_a}  < C \epsilon.
\end{align*}
\end{enumerate}
\end{theorem}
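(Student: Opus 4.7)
The plan is to treat $x$ as the evolution variable. Since every state near $\overline U$ is supersonic ($u_j>c_j$), system \eqref{eq:euler} is strictly hyperbolic in the $x$-direction with four characteristic families: a slow acoustic $1$-family, two linearly degenerate families (vortex sheet and entropy wave) that in this geometry share the same characteristic speed, and a fast acoustic $4$-family. After recasting \eqref{eq:euler}--\eqref{eq:initdata} in this form and linearising around each of the four constant states $U_b,U_{m1},U_{m2},U_a$, one verifies that the eigenvalues are well-separated and that the strong $1$- and $4$-shocks are non-characteristic with respect to their adjacent states. Two Riemann solvers are then built: for data close to a single constant, the classical four-wave weak solver $(\alpha_1,\alpha_2,\alpha_3,\alpha_4)$ depending smoothly on the jump; for data close to the full background profile, an implicit-function construction of a perturbed strong four-wave configuration with strengths $(\sigma_1,\sigma_2,\sigma_3,\sigma_4)$ near $(\sigma_{10},\sigma_{20},\sigma_{30},\sigma_{40})$ separating four small-BV zones. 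Solvability of the perturbed Riemann problem requires non-degeneracy of the shock and contact polars at the background strengths, to be checked from the Rankine--Hugoniot conditions together with the ideal-gas law \eqref{eq:pressureandenergy}.

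The core technical step is the wave interaction analysis. For weak-weak diamonds one has the classical $O(1)$ Glimm estimate. For a weak wave of family $j$ impinging on a strong front one derives the transmitted and reflected weak strengths in the form $\alpha'_i=\sum_j K_{ij}\,\alpha_j$, with coefficients $K_{ij}$ depending smoothly on the strong strengths. The crucial claim, to be verified by explicit computation from the Euler Rankine--Hugoniot relations, is that every cross-family reflection coefficient satisfies $|K_{ij}|<1$ at the background strengths, both for reflection off a strong shock and for reflection off the combined vortex sheet/entropy wave. This reflection bound is the principal obstacle of the paper: without it, the positive weights assigned to weak waves in each of the four background zones cannot be tuned to make the forthcoming Glimm functional monotone, and the whole scheme collapses.

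With these bounds available one sets up the modified Glimm scheme of \cite{shockwedges,compvortex}: the strong fronts $\chi_1,\chi_{2,3},\chi_4$ are explicitly tracked, weak waves in each zone are resolved by the weak Riemann solver, and strong-weak diamonds by the perturbed one. On each mesh curve $J$ define a Glimm-type functional
\[
F(J) = L(J) + \kappa\, Q(J) + \kappa' \sum_{i=1}^{4}|\sigma_i(J)-\sigma_{i0}|,
\]
where $L$ is the weighted total variation of weak waves (with positive weights chosen on each of the four zones using the reflection bounds), $Q$ the standard approaching-waves quadratic form, and the last term controls the strong strengths. The interaction estimates then yield, for $\epsilon$ small, that $F$ is non-increasing across every diamond, providing uniform BV bounds on the approximate solutions $U^{\theta,h}$ together with uniform Lipschitz bounds on the tracked fronts $\chi_i^{\theta,h}$. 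A standard Glimm--Liu sampling argument finally extracts, for a.e. equidistributed $\theta$, an $L^1_{loc}$ limit $U$ which is a BV entropy solution of \eqref{eq:euler}--\eqref{eq:initdata} together with Lipschitz limits $\chi_i$, and the uniform smallness of the weak-wave strengths delivers the four $C\epsilon$ bounds claimed in Theorem \ref{thm:mainthm}.
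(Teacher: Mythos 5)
Your proposal follows essentially the same route as the paper: the Cauchy problem in the flow direction, local parameterizations of the weak and strong wave curves via the implicit function theorem, interaction estimates whose key feature is that the relevant reflection coefficients off the strong fronts are strictly less than $1$, a weighted Glimm functional $L+KQ+C^*\sum_i|\sigma_i-\sigma_{i0}|$ shown to be non-increasing, and the standard compactness/consistency argument for the sampled limit. The only small caveats are that the paper keeps the three strong fronts permanently separated (so each interaction diamond meets at most one strong front, rather than a single ``perturbed four-strong-wave'' solver) and that only the cross-acoustic-family reflection coefficients ($K_{vb1}$, $K_{va4}$, $K_{1sa4}$, $K_{4sb1}$) are shown to be $<1$, the remaining coefficients being merely bounded and absorbed by the choice of weights.
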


In \S 2--\S6, we prove this main theorem and related properties of the global solution in $BV$.

\section{Riemann Problems and Solutions}

This section includes some fundamental properties of system \eqref{eq:euler}
and some analysis of the Riemann solutions, which will be used
in the subsequent sections; see also \cite{shockwedges, compvortex}.

\subsection{Euler equations}

With $U = (u,v,p,\rho)$, the Euler system can be written in the following conservation form:
\begin{equation} \label{eq:consform1}
W(U)_x + H(U)_y = 0,
\end{equation}
where
\begin{equation}\label{eq:consform}
\begin{aligned}
W(U) = (\rho u, \rho u^2 + p, \rho u v, \rho u(h + \frac{u^2 + v^2}{2})) , \,\,
H(U) = (\rho v, \rho u v, \rho v^2 + p, \rho v (h + \frac{u^2 + v^2}{2})),
\end{aligned}
\end{equation}
and $h = \frac{\gamma p}{(\gamma -1) \rho} $.
For a smooth solution $U(x,y)$, (\ref{eq:consform}) is equivalent to
\begin{equation}
\nabla_U W(U) U_x + \nabla_U H(U) U_y = 0,
\end{equation}
so that the eigenvalues of \eqref{eq:consform} are
the roots of the fourth order polynomial{\rm :}
\begin{equation}
\mathrm{det}\brac*{\lambda \nabla_U W(U) - \nabla_U H(U)},
\end{equation}
which are solutions of the equation:
\begin{equation}
(v-\lambda u)^2 \brac*{(v-\lambda u)^2 - c^2 (1 + \lambda^2)} = 0,
\end{equation}
where $c = \sqrt{\gamma p/\rho}$ is the sonic speed.
If the flow is supersonic, {\it i.e.} $u^2 + v^2 > c^2$,
system (\ref{eq:euler}) is hyperbolic.
In particular, when $u > c$, system (\ref{eq:euler}) has the following four
eigenvalues in the $x$--direction:
\begin{equation}\label{eq:eigenvalues}
\lambda_j = \frac{uv + (-1)^j c \sqrt{u^2 + v^2 - c^2}}{u^2 - c^2} \quad\mbox{for $j = 1,4$}; \,\,\qquad \lambda_i = \frac{v}{u} \quad \mbox{for $i = 2,3$},
\end{equation}
with four corresponding linearly independent eigenvectors:
\begin{equation}\label{eq:eigenvectors}
\begin{aligned}
&\mathbf{r}_j= \kappa_j (-\lambda_j, 1,\rho(\lambda_j u - v), \frac{\rho(\lambda_j u - v)}{c^2})^\top \qquad \mbox{for $j= 1,4$}, \\[2mm]
&\mathbf{r}_2 = (u,v,0,0)^\top, \qquad \mathbf{r}_3 = (0,0,0,\rho)^\top,
\end{aligned}
\end{equation}
where $\kappa_j$ are chosen to ensure that $\mathbf{r}_j \cdot \nabla \lambda_j = 1$ for $j=1,4$,
since the first and fourth characteristic fields are always genuinely nonlinear,
and the second and third are linearly degenerate.

In particular, at a state $U=(u,0,p,\rho)$,
\[
\lambda_2(U) = \lambda_3(U) = 0, \qquad \lambda_1(U) = - \frac{c}{\sqrt{u^2 - c^2}} = - \lambda_4(U) < 0.
\]
	
\begin{definition} $U \in BV(\R^2_+)$ is called an entropy solution of (\ref{eq:euler}) and (\ref{eq:initdata}) if
\begin{enumerate}
\item[(i)]  $U$ satisfies the Euler equations  (\ref{eq:euler}) in the distributional sense and (\ref{eq:initdata}) in the trace sense;
\item[(ii)] $U$ satisfies the following entropy inequality:
			\begin{equation} \label{eq:entropyineq} (\rho u S)_x + (\rho v S)_y \leq 0 \end{equation}
			in the distributional sense in $\R^2_+$, including the boundary.
\end{enumerate}
\end{definition}
	
\subsection{Wave curves in the phase space}

In this subsection, based on  \cite[pp. 297--298]{shockwedges} and \cite[pp. 1666--1667]{compvortex},
we look at the basic properties of nonlinear waves.

We focus on the region, $\{u > c\}$, in the state space,
especially in the neighborhoods of $U_j$ in the background solution.

We first consider self-similar solutions of (\ref{eq:euler}):
\[
(u,v,p,\rho)(x,y) = (u,v,p,\rho)(\xi), \qquad \xi = \frac{y}{x},
\]
which connect to a state $U_0 = (u_0,v_0,p_0,v_0)$. We find that
\begin{equation}\label{eq:eigens}
\mathrm{det}\big(\xi \nabla_U W(U) - \nabla_U H(U)\big) = 0,
\end{equation}
which implies
\[
\xi = \lambda_i = \frac{v}{u} \quad\mbox{for $i= 2,3$},
\quad \text{or} \quad \xi = \lambda_j = \frac{uv + (-1)^j c \sqrt{u^2 + v^2 - c^2}}{u^2 - c^2} \quad \mbox{for $j = 1,4.$}
\]

First, for the cases $i=2,3$, we obtain
\begin{equation}\label{eq:vortexsheets}
		dp = 0, \quad v du - u dv = 0.
\end{equation}
This yields the following curves $C_i(U_0)$ in the phase space through $U_0$:
\begin{equation}\label{eq:vortex}
C_i(U_0): \quad p = p_0, \quad w =\frac{v}{u} = \frac{v_0}{u_0}\,\, \qquad \mbox{for $i =2,3$},
\end{equation}
which describe compressible vortex sheets $(i=2)$ and entropy waves $(i=3)$.
More precisely, we have a vortex sheet governed by
\begin{equation} \label{eq:param2sheet}
C_2(U_0): \, U = (u_0 e^{\sigma_2}, v_0 e^{\sigma_2}, p_0, \rho_0)^\top
\end{equation}
with strength $\sigma_2$ and slope $\frac{v_0}{u_0}$, which is determined by
\[
	\frac{d U}{d \sigma_2} = \mathbf{r_2}(U), \qquad
	U|_{\sigma_2 = 0} = U_0;
\]
and an entropy wave governed by
\begin{equation}\label{eq:param3sheet}
C_3(U_0): \, U = (u_0 , v_0 , p_0, \rho_0 e^{\sigma_3})^\top
\end{equation}
with strength $\sigma_3$ and slope $\frac{v_0}{u_0}$, which is determined by
\[
	\frac{d U}{d \sigma_3} = \mathbf{r_3}(U), \qquad
	U|_{\sigma_3 = 0} = U_0.
\]

For $j=1,4$, we obtain the $j$th rarefaction wave curve $R_j(U_0)$, $j=1,4$, in the phase space through $U_0$:
\begin{equation}\label{eq:14rarefaction}
R_j(U_0):\, dp = c^2 d \rho,\, du = -\lambda_j dv, \,\rho(\lambda_j u - v) dv = dp \,\qquad \text{for } \rho < \rho_0, u >c , \, j =1,4.
\end{equation}
For shock wave solutions, the Rankine-Hugoniot conditions for \eqref{eq:euler} are
\begin{eqnarray}
&& s \sqbrac*{\rho u} = \sqbrac*{\rho v}, \label{eq:rankine1}\\
&& s \sqbrac*{\rho u^2 + p} = \sqbrac*{\rho u v}, \label{eq:rankine2}\\
&& s \sqbrac*{\rho u v } = \sqbrac*{\rho v^2 + p},\label{eq:rankine3}\\
&& s \big[\rho u \big(h + \frac{u^2+ v^2}{2}\big)\big] = \big[\rho v \big(h + \frac{u^2+v^2}{2}\big)\big],
\label{eq:rankine4}
\end{eqnarray}
where the jump symbol $\sqbrac*{\,\cdot\,}$ stands for the value of the front state minus that of the back state.
We find that
\[
(v_0 - s u_0)^2 \big( (v_0 - su_0)^2 - \overline c^2 (1+s^2)\big) = 0,
\]
where $\overline c^2 = \frac{c_0^2}{b}$ and $b = \frac{\gamma + 1}{2} - \frac{\gamma -1 }{2} \frac{\rho}{\rho_0}$. This implies
\begin{equation}\label{eq:23shockspeed}
s = s_i = \frac{v_0}{u_0} \qquad \mbox{for $i=2,3$},
\end{equation}
or
\begin{equation}\label{eq:14shockspeed}
 s = s_j = \frac{u_0 v_0 + (-1)^j \overline c \sqrt{u_0^2 + v_0^2 - \overline c^2}}{u_0^2 - \overline c^2} \qquad \mbox{for $j= 1,4$},
\end{equation}
where $u_0 > \overline c$ for small shocks.
	
For $s_i$, $i=2,3$, in (\ref{eq:rankine1})--(\ref{eq:rankine4}),
we obtain the same $C_i(U_0)$, $i =2,3$,  defined in \eqref{eq:param2sheet}--\eqref{eq:param3sheet},
since the corresponding fields are linearly degenerate.

On the other hand, for $s_j, j =1,4$, in (\ref{eq:rankine1})--(\ref{eq:rankine4}),
we obtain the $j$th shock wave curve $S_j(U_0)$, $j=1,4$, through $U_0$:
\begin{equation} \label{eq:14shocksdescribed}
S_j(U_0): \,\sqbrac*{p} = \frac{c_0^2}{b} \sqbrac*{\rho}, \,\sqbrac*{u} = - s_j \sqbrac*{v},\, \rho_0 (s_j u_0 - v_0) \sqbrac*{v} = \sqbrac*{p}
\qquad\, \text{for $\rho > \rho_0, u > c, \,\, j =1,4$},
\end{equation}
where $\rho_0 < \rho$ is equivalent to the entropy condition (\ref{eq:entropyineq}) on the shock wave.
We also know that $S_j(U_0)$ agrees with $R_j(U_0)$ up to second order and that
\begin{equation}\label{eq:densityineq}
\left.	\frac{d \sigma_1}{d \rho} \right|_{S_1(U_0)} < 0, \qquad \left.	\frac{d \sigma_4}{d \rho} \right|_{S_4(U_0)} > 0.
\end{equation}
The entropy inequality (\ref{eq:entropyineq}) is equivalent to the following:
\begin{equation}\label{eq:shockspeeds}
\begin{aligned}
&\lambda_j(\text{above})  < \,  \sigma_j < \lambda_j(\text{below}) \qquad\mbox{for $j=1,4$},\\
& \sigma_1  < \lambda_{2,3}(\text{below}), \\
&		\lambda_{2,3}(\text{above}) < \,   \sigma_4;
\end{aligned}
\end{equation}
see \cite[pp. 269--270, pp. 297--298]{shockwedges} for the details.

\subsection{Riemann problems}
We consider the Riemann problem for \eqref{eq:euler}:
\begin{equation}\label{eq:rproblem}
\left.	U \right| _{x=x_0}
= \begin{cases} U_a, &\,\,\, y>y_0,\\
			U_b, &\,\,\, y< y_0,
\end{cases}
\end{equation}
where $U_a$ and $U_b$ are constant states, regarded as the above and below state
with respect to line $y=y_0$.
	
\subsubsection{Riemann problem only involving weak waves}
Following Lax \cite{lax}, we can parameterize any physically admissible wave curve in a neighborhood of a constant state $U_0$.
	
\begin{lemma}
Given $U_0\in \R^4$, there exists a neighborhood $O_\epsilon(U_0)$ such that, for all $U_1,U_2 \in O_\epsilon(U_0)$,
the Riemann problem \eqref{eq:rproblem} admits a unique admissible solution consisting of four elementary waves.
In addition, state $U_2$ can be represented by
\begin{equation}\label{3.25a}
U_2 = \Phi(\alpha_4,\alpha_3,\alpha_2,\alpha_1;U_1)
\end{equation}
with
\begin{align*}
& \Phi(0,0,0,0;U_1) = U_1,\\
& \partial_{\alpha_i} \Phi(0,0,0,0;U_1) = \mathbf{r}_i(U_1) \qquad\mbox{for $i=1,2,3,4$}.
\end{align*}
\end{lemma}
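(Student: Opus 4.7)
The plan is to follow the classical Lax construction \cite{lax} adapted to the four families of the steady Euler system in the flow direction. For each $i\in\{1,2,3,4\}$ and any base state $U\in O_\epsilon(U_0)$, I would first define a scalar-parameter wave curve $\Psi_i(\alpha_i;U)$ passing through $U$ at $\alpha_i=0$: for $i=1,4$ (genuinely nonlinear fields) take $\Psi_i$ to be the rarefaction curve $R_i(U)$ of \eqref{eq:14rarefaction} for $\alpha_i\ge 0$ and the admissible shock curve $S_i(U)$ of \eqref{eq:14shocksdescribed} for $\alpha_i<0$; for $i=2,3$ (linearly degenerate fields) take $\Psi_i$ to be $C_i(U)$ given by \eqref{eq:param2sheet}--\eqref{eq:param3sheet}, parameterized so that $\frac{d}{d\alpha_i}\Psi_i(0;U)=\mathbf{r}_i(U)$. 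The standing property that $S_j(U)$ and $R_j(U)$ agree with each other up to second order, noted just after \eqref{eq:14shocksdescribed}, ensures that each $\Psi_i(\cdot;U)$ is $C^2$ across $\alpha_i=0$ with $\partial_{\alpha_i}\Psi_i(0;U)=\mathbf{r}_i(U)$.

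Next I would form the composite map
\[
\Phi(\alpha_4,\alpha_3,\alpha_2,\alpha_1;U_1) := \Psi_4\bigl(\alpha_4;\Psi_3(\alpha_3;\Psi_2(\alpha_2;\Psi_1(\alpha_1;U_1)))\bigr),
\]
which is well-defined and $C^2$ on a sufficiently small neighborhood of the origin in the $\alpha$-variables, uniformly for $U_1\in O_{\epsilon/2}(U_0)$. Clearly $\Phi(0,0,0,0;U_1)=U_1$, and by the chain rule together with $\Psi_i(0;\cdot)=\mathrm{Id}$ one obtains
\[
\partial_{\alpha_i}\Phi(0,0,0,0;U_1)=\mathbf{r}_i(U_1)\qquad\text{for }i=1,2,3,4.
\]
Thus the Jacobian $\partial_\alpha \Phi$ at the origin is the matrix whose columns are $\mathbf{r}_1(U_1),\ldots,\mathbf{r}_4(U_1)$, and these four eigenvectors listed in \eqref{eq:eigenvectors} are linearly independent at every supersonic state (including $U_0$ by shrinking $\epsilon$). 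Hence $\partial_\alpha \Phi(0;U_1)$ is invertible.

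I would then apply the implicit function theorem to the equation $\Phi(\alpha;U_1)=U_2$: shrinking $\epsilon$ once more, for all $U_1,U_2\in O_\epsilon(U_0)$ there exists a unique $\alpha=(\alpha_4,\alpha_3,\alpha_2,\alpha_1)$ in a small neighborhood of $0$ solving \eqref{3.25a}, depending smoothly on $(U_1,U_2)$. This uniqueness in the $\alpha$-variables yields uniqueness of the Riemann solution within the class of four admissible elementary waves ordered by increasing speed, since the speeds along the four curves at $\alpha=0$ are separated as in \eqref{eq:eigenvalues} (in particular $\lambda_1<\lambda_{2,3}<\lambda_4$ at $U_0$), so the wave fan is genuinely monotone after shrinking $\epsilon$.

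The only nonroutine part is verifying that the branches chosen for $i=1,4$ actually produce admissible waves: for $\alpha_i<0$ one must check that the selected shock branch lies on the physically admissible side, i.e.\ satisfies the entropy inequality \eqref{eq:entropyineq} and the Lax condition \eqref{eq:shockspeeds}. This follows from the standard Lax argument together with \eqref{eq:densityineq}, which fixes the sign of the density jump consistent with the chosen direction of the parameter $\alpha_i$. Once this sign convention is in place, the four-wave fan is automatically admissible, and the lemma is proved.
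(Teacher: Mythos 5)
Your argument is correct: it is precisely the classical Lax construction (composing the shock/rarefaction curves for the genuinely nonlinear families $1,4$ with the contact curves for the linearly degenerate families $2,3$, checking $C^2$ matching and that the Jacobian at the origin is the invertible matrix of eigenvectors, then invoking the implicit function theorem), which is exactly what the paper relies on, since it states this lemma without proof and simply cites Lax \cite{lax}. No gaps; the admissibility and speed-ordering remarks at the end are the standard finishing touches.
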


From now on, we denote $\{ U_1, U_2 \} = (\alpha_1,\alpha_2,\alpha_3,\alpha_4)$ as a compact way to write the representation
of \eqref{3.25a}.

We also note that the renormalization factors $\kappa_j$ in \eqref{eq:eigenvectors}
have been used to ensure that $\mathbf r_j \cdot \nabla \lambda_j = 1$
in a neighborhood of any unperturbed state $U_0 = (u_0, 0, p_0, \rho_0)$ with $u_0 > 0$, such as $U_{m1}$ or $U_{m2}$:
	
\begin{lemma}
At any state $U_0 = (u_0, 0, p_0,\rho_0)$ with $u_0 > 0$,
\[
\kappa_1(U_0) = \kappa_4(U_0) > 0,
\]
which also holds  in a neighborhood of $U_0$.
\end{lemma}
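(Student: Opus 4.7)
The plan is to compute $\mathbf{r}_j \cdot \nabla \lambda_j$ explicitly at $U_0$ for $j = 1,4$ and observe that the version of this inner product stripped of the $\kappa_j$ factor depends only on $\lambda_j^{\,2}$, which is common to the two families at $U_0$. The key structural fact driving everything is that at $v=0$ the two genuinely nonlinear eigenvalues are negatives of one another:
\[
\lambda_4(U_0) = -\lambda_1(U_0) = \frac{c_0}{\sqrt{u_0^2 - c_0^2}} =: \lambda_0 > 0,
\]
so in particular $\lambda_1(U_0)^2 = \lambda_4(U_0)^2$.

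First, I would differentiate the dispersion relation
\[
(v - \lambda u)^2 = c^2(1 + \lambda^2)
\]
(which characterizes $\lambda_1$ and $\lambda_4$) implicitly with respect to $(u,v,p,\rho)$, using $c^2 = \gamma p/\rho$ so that $\partial_p c^2 = c^2/p$ and $\partial_\rho c^2 = -c^2/\rho$. Evaluating at $U_0$ uses the simplification $v-\lambda u = -\lambda u_0$ and yields rational expressions for $\partial_u\lambda_j$, $\partial_v\lambda_j$, $\partial_p\lambda_j$, $\partial_\rho\lambda_j$ whose thermodynamic components share a factor $1/\lambda_j$.

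Next I would substitute the unnormalized eigenvector
\[
\tilde{\mathbf{r}}_j(U_0) = \brac[\big]{-\lambda_j,\ 1,\ \rho_0 \lambda_j u_0,\ \rho_0 \lambda_j u_0/c_0^2}^{\top}
\]
into $\tilde{\mathbf{r}}_j(U_0)\cdot\nabla\lambda_j(U_0)$. The factor $\lambda_j$ appearing in the third and fourth entries of $\tilde{\mathbf{r}}_j$ cancels the $1/\lambda_j$ from the implicit differentiation, and using $\rho_0 c_0^2/p_0 = \gamma$ the whole expression is expected to collapse to
\[
\tilde{\mathbf{r}}_j(U_0)\cdot\nabla\lambda_j(U_0) = \frac{(\gamma+1)\,u_0\,(1+\lambda_j^{\,2})}{2(u_0^2 - c_0^2)},
\]
which is manifestly positive whenever $u_0 > c_0 > 0$, and manifestly identical for $j = 1$ and $j=4$ since $\lambda_1^{\,2} = \lambda_4^{\,2}$ at $U_0$. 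Inverting then gives $\kappa_1(U_0)=\kappa_4(U_0)>0$. The extension to a neighborhood of $U_0$ is a continuity argument: the entries of $\tilde{\mathbf{r}}_j$ and $\nabla\lambda_j$ are smooth on the open set $\{u>c\}$, and the value of $\tilde{\mathbf{r}}_j\cdot\nabla\lambda_j$ is bounded away from zero at $U_0$, so $\kappa_j = (\tilde{\mathbf{r}}_j\cdot\nabla\lambda_j)^{-1}$ remains positive nearby.

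The only genuine obstacle is bookkeeping: the implicit differentiation must be organized so that the cancellation producing a $\lambda_j^{\,2}$-only expression is transparent. A conceptual shortcut worth noting is the reflection symmetry $(x,y,u,v)\mapsto(x,-y,u,-v)$ of \eqref{eq:euler}, which fixes $U_0$, swaps the $1$- and $4$-families, and maps $\tilde{\mathbf{r}}_1(U_0)$ to $\tilde{\mathbf{r}}_4(U_0)$; this forces $\kappa_1(U_0)=\kappa_4(U_0)$ a priori, reducing the task to the single sign verification above.
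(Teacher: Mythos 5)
The paper states this lemma without proof (it is presented as a routine consequence of the normalization $\mathbf r_j\cdot\nabla\lambda_j=1$, with the underlying computations deferred to \cite{shockwedges,compvortex}), so there is no in-paper argument to compare against; your proposal supplies a correct verification. I checked the computation: implicit differentiation of $(v-\lambda u)^2=c^2(1+\lambda^2)$ at $v=0$ gives $F_\lambda=2\lambda(u_0^2-c_0^2)$, $\partial_u\lambda=-\lambda u_0/(u_0^2-c_0^2)$, $\partial_v\lambda=u_0/(u_0^2-c_0^2)$, $\partial_p\lambda=c_0^2(1+\lambda^2)/(2\lambda p_0(u_0^2-c_0^2))$, $\partial_\rho\lambda=-c_0^2(1+\lambda^2)/(2\lambda\rho_0(u_0^2-c_0^2))$, and contracting with $\tilde{\mathbf r}_j(U_0)=(-\lambda_j,1,\rho_0\lambda_j u_0,\rho_0\lambda_j u_0/c_0^2)^\top$ does collapse, via $\rho_0c_0^2/p_0=\gamma$, to $\frac{(\gamma+1)u_0(1+\lambda_j^2)}{2(u_0^2-c_0^2)}$, which depends on $j$ only through $\lambda_j^2$ and is positive. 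Two small caveats. First, positivity requires $u_0>c_0$, not merely $u_0>0$ as literally stated in the lemma; this is harmless since the whole paper works in the supersonic region $\{u>c\}$, but you should say so explicitly. Second, in your symmetry aside the reflection $R:(u,v,p,\rho)\mapsto(u,-v,p,\rho)$ actually sends $\tilde{\mathbf r}_1(U_0)$ to $-\tilde{\mathbf r}_4(U_0)$ and $\nabla\lambda_1(U_0)$ to $-\nabla\lambda_4(U_0)$; the two signs cancel in the inner product, so the conclusion $\kappa_1(U_0)=\kappa_4(U_0)$ still follows, but as written the sign bookkeeping is off. Note also that only the positivity (not the equality $\kappa_1=\kappa_4$) persists off the axis $v=0$, which is the correct reading of the paper's phrase ``which also holds in a neighborhood of $U_0$,'' and your continuity argument handles exactly that.
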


Also, since $\nabla_U \Phi(0,0,0,0;U_1)$ is equal to the identity,
by the implicit function theorem,
we can  find $\tilde \Phi$ (after possibly shrinking $O_\epsilon(U_0)$) such that, in the above situation,
we can represent
\[
U_1 = \tilde \Phi(\alpha_4,\alpha_3,\alpha_2,\alpha_1;U_2).
\]
Differentiating the relation:
\[
\tilde \Phi(\alpha_4,\alpha_3,\alpha_2,\alpha_1; \Phi(\alpha_4,\alpha_3,\alpha_2,\alpha_1;U_1)) = U_1
\]
and using that
\[
\tilde \Phi(0,0,0,0;U_2) = U_2,
\]
we deduce
\[
\partial_{\alpha_i} \tilde \Phi(0,0,0,0;U_2) = - \mathbf{r}_i(U_2)  \qquad \mbox{for $i=1,2,3,4$}.
\]
	
This will be used later in \S 4.
We exploit the symmetries between the shock polar and the reverse shock polar,
and the symmetry between the $1$-shock polar and the reverse $4$-shock polar to allow for more concise arguments.

\subsubsection{Riemann problem involving a strong $1$--shock}

The results here are based on those in \S 6.1.4 of \cite{shockwedges}, with small changes for our requirements.
	
For a fixed $U_1$, when $U_2 \in S_1(U_1)$, we use $\{ U_1, U_2 \} = (\sigma_1,0,0,0)$ to denote the $1$--shock that
connects $U_1$ to $U_2$ with speed $\sigma_1$.

\begin{lemma}\label{lem:shock1speed}
For all $U_1 \in O_\epsilon(U_{b}), U_2 \in S_1(U_1)\cap O_{\epsilon}(U_{m1})$, and $\sigma_{1} \in O_{\hat \epsilon}(\sigma_{10})$ with $\{ U_1, U_2 \} = (\sigma_1,0,0,0)$,
\[
\sigma_1 < 0, \qquad u_1 < u_2 < \big(1 + \frac{1}{\gamma}\big) u_1.
\]
\end{lemma}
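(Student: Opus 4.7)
The plan is to verify both conclusions first at the background configuration $(U_1,U_2,\sigma_1)=(U_b,U_{m1},\sigma_{10})$ and then extend to all $(U_1,U_2,\sigma_1)$ in the $\epsilon$- and $\hat\epsilon$-neighborhoods by continuity of the shock curve $S_1(\cdot)$ and of the shock-speed formula \eqref{eq:14shockspeed}. Because the inequalities in the conclusion are strict at the background, this strategy will suffice after possibly shrinking $\epsilon$ and $\hat\epsilon$.

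For the sign $\sigma_1<0$, I would apply formula \eqref{eq:14shockspeed} with $j=1$ and $U_0=U_1$. The denominator $u_1^2-\bar c^2$ is positive by the supersonic hypothesis, which persists on a neighborhood of $U_b$; the numerator $u_1 v_1-\bar c\sqrt{u_1^2+v_1^2-\bar c^2}$ is strictly negative on the background and hence on $O_\epsilon(U_b)$. The verification reduces, after squaring, to $(u_1^2-\bar c^2)(v_1^2-\bar c^2)<0$, which holds provided $|v_1|<\bar c$, a condition satisfied at the background. The strict inequality then propagates to $\sigma_1\in O_{\hat\epsilon}(\sigma_{10})$.

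For the chain $u_1<u_2<\bigl(1+\tfrac{1}{\gamma}\bigr)u_1$, I would combine the Rankine--Hugoniot relations \eqref{eq:rankine1}--\eqref{eq:rankine3}. Using the mass conservation $\sigma_1[\rho u]=[\rho v]$ together with the $x$-momentum equation $\sigma_1[\rho u^2+p]=[\rho uv]$ and specializing to the background value $v_2=v_{m1}=0$ yields, after elimination of $v_1$, the clean identity
\[
\rho_2 u_2\,(u_2-u_1)=p_1-p_2.
\]
Substituting the shock-curve relation $[p]=(c_0^2/b)[\rho]$ from \eqref{eq:14shocksdescribed} together with $b=(\gamma+1)/2-(\gamma-1)\rho_2/(2\rho_1)$ reduces both inequalities to an algebraic condition purely in terms of the compression ratio $\rho_2/\rho_1$ and $\gamma$; at the background this condition can be verified in closed form using the bounds $1<\rho_{m1}/\rho_b<(\gamma+1)/(\gamma-1)$ (the latter being the polytropic detachment threshold), and continuity then propagates the strict inequalities to the full neighborhood. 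The main obstacle I anticipate is the quantitative upper bound with the specific constant $1+1/\gamma$: it depends critically on the polytropic factor $b$ and must be reconciled with the Bernoulli/Prandtl relation across the shock, so careful algebraic bookkeeping is required. This is not a generic hyperbolic fact but is tied to the ideal-gas constitutive law, and is what will ultimately allow the reflection-coefficient estimates of \S 4.
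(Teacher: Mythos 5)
Your overall strategy (verify the strict inequalities at the background and propagate by continuity) matches the paper's, and your identity $\rho_2 u_2(u_2-u_1)=p_1-p_2$, obtained from the first two Rankine--Hugoniot conditions with $v_2=0$, is exactly the computation in the paper. But there are two genuine gaps. The first is in the sign of $\sigma_1$: evaluating \eqref{eq:14shockspeed} at $U_1\approx U_b$, the negativity of the numerator $u_1v_1-\overline{c}\sqrt{u_1^2+v_1^2-\overline{c}^2}$ is, after your squaring step, \emph{equivalent} to $|v_1|<\overline{c}$ (when $u_1v_1>0$). That condition is not among the hypotheses of the background configuration --- only $u_j>c_j$ is assumed, and nothing is assumed about $v_b$ relative to $\overline{c}$ --- so asserting it is "satisfied at the background" is circular: it is a restatement of $\sigma_{10}<0$. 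The paper avoids this by evaluating the shock-speed formula from the other endpoint of the shock, the state $U_{m1}=(u_{m1},0,p_{m1},\rho_{m1})$, where $v=0$ kills the first term of the numerator and gives $\sigma_{10}=-\overline{c}/\sqrt{u_{m1}^2-\overline{c}^2}<0$ at once.

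The second gap is in the quantitative velocity bound. (As a side remark, the paper's own proof yields $u_{m1}<u_b<(1+\tfrac{1}{\gamma})u_{m1}$, so the indices in the lemma statement appear to be swapped relative to its proof; I will not hold that against you.) You claim that substituting $\sqbrac*{p}=(c_0^2/b)\sqbrac*{\rho}$ reduces the bound to a condition "purely in terms of the compression ratio $\rho_2/\rho_1$ and $\gamma$". That reduction cannot close: your identity gives $(u_b-u_{m1})/u_{m1}=(p_{m1}-p_b)/(\rho_{m1}u_{m1}^2)$, which depends on the downstream Mach number and not only on the density ratio, so the detachment bound $1<\rho_{m1}/\rho_b<(\gamma+1)/(\gamma-1)$ alone (which, moreover, you do not establish) cannot deliver $u_b-u_{m1}<u_{m1}/\gamma$. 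The paper's finish is two lines and uses precisely the two inputs your route omits: $p_b>0$ gives $\rho_{m1}u_{m1}(u_b-u_{m1})=p_{m1}-p_b<p_{m1}=\tfrac{1}{\gamma}\rho_{m1}c_{m1}^2$, and then $u_{m1}>c_{m1}$ gives $u_b-u_{m1}<u_{m1}/\gamma$. The polytropic factor $b$ never enters; the step you flagged as the "main obstacle" is exactly where your approach breaks down.
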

	
\begin{proof}
We follow the same steps as \cite[pp. 273]{shockwedges}.
First, by \eqref{eq:14shockspeed},
\[
\sigma_{10} = - \frac{\overline c}{\sqrt{u_{m1}^2 - \overline c^2}} < 0.
\]
Now, from the Rankine-Hugoniot conditions \eqref{eq:rankine1}--\eqref{eq:rankine4},
\begin{align}
&\sigma_{10}( \rho_b u_b - \rho_{m1} u_{m1}) = \rho_b v_b, \label{eq:rkl1} \\
&\sigma_{10}\big( \rho_b u_b^2 - \rho_{m1}u^2_{m1} + p_b - p_{m1}\big) = \rho_b v_b u_b.
\label{eq:rkl2}
\end{align}
Thus, using \eqref{eq:rkl1}--\eqref{eq:rkl2},
\[
p_{m1} - p_b = \rho_{m1} u_{m1} (u_b - u_{m1}),
\]
so that $u_b > u_{m1}$ as $p_{m1} > p_b$.
Also, since $p_{b} > 0$, we have
\[
\frac{1}{\gamma}\rho_{m1} c_{m1}^2 =   p_{m1} > \rho_{m1} u_{m1} (u_b - u_{m1}),
\]
so that,  as $u_{m1} > c_{m1}$,
\[
u_b < \big(1+ \frac{1}{\gamma}\big) u_{m1}.
\]		
The result now follows by continuity.
\end{proof}
	
\begin{lemma} \label{lem:matrixadet} \label{lem:matrixa}
Let
\[
A : = \nabla_U H(U_{m1}) - \sigma_{10} \nabla_U W(U_{m1}),
\]
and
\[
P := u_{m1} \Big(h_b + \frac{u_{b}^2 + v_b^2}{2} - \frac{u_{m1}^2}{2}\Big) + \Big(\frac{c_{m1}^2}{\gamma-1} + u_{m1}^2\Big)(u_{m1} - u_b).
\]
Then
\begin{align*}	
&\mathrm{det} A > 0,\quad  \mathrm{det}(A \mathbf{r_4}, A \mathbf{r_3}, A \mathbf{r_2}, A \mathbf{r_1})|_{U = U_{m1}} > 0 , \\
&\mathrm{det}(A \mathbf{r_4}, A \mathbf{r_3}, A \mathbf{r_2}, A \partial_{\sigma_1} G_1(\sigma_{10},U))|_{U = U_{m1}} > 0.
\end{align*}
\end{lemma}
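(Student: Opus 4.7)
The plan is to carry out all computations explicitly at $U_{m1}$, exploiting the simplification $v_{m1}=0$, which makes many entries of $\nabla_U W(U_{m1})$ and $\nabla_U H(U_{m1})$ vanish. First I would write out both Jacobians in the coordinates $U=(u,v,p,\rho)$ and assemble $A$. For the first claim $\det A > 0$, the cleanest route is the classical factorization of the characteristic polynomial,
\[
\det(\sigma \nabla_U W - \nabla_U H)\big|_{U_{m1}} = \det \nabla_U W(U_{m1}) \, \prod_{j=1}^4 (\sigma - \lambda_j(U_{m1})),
\]
valid since $\nabla_U W(U_{m1})$ is nonsingular. A direct expansion yields $\det \nabla_U W(U_{m1}) = -\rho_{m1}^2 u_{m1}^2 (u_{m1}^2-c_{m1}^2)/(\gamma-1) < 0$, while the eigenvalues at $U_{m1}$ are $\lambda_{2,3}=0$ and $\lambda_{1,4}=\mp c_{m1}/\sqrt{u_{m1}^2-c_{m1}^2}$. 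The Lax entropy condition \eqref{eq:shockspeeds} forces $\lambda_1(U_{m1}) < \sigma_{10} < 0$, hence $\sigma_{10}^2 < \lambda_1(U_{m1})^2$, so $\prod_j(\sigma_{10}-\lambda_j(U_{m1})) = \sigma_{10}^2(\sigma_{10}^2 - \lambda_1(U_{m1})^2) < 0$, and the product of two negatives yields $\det A>0$.

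For the second claim, the eigenvector relation $\nabla_U H\,\mathbf{r}_j = \lambda_j \nabla_U W\,\mathbf{r}_j$ gives $A\,\mathbf{r}_j = (\lambda_j - \sigma_{10}) \nabla_U W\,\mathbf{r}_j$, so multilinearity yields
\[
\det(A\mathbf{r}_4, A\mathbf{r}_3, A\mathbf{r}_2, A\mathbf{r}_1)\big|_{U_{m1}} = \Big(\prod_{j=1}^4(\lambda_j - \sigma_{10})\Big) \det \nabla_U W \cdot \det(\mathbf{r}_4,\mathbf{r}_3,\mathbf{r}_2,\mathbf{r}_1)\big|_{U_{m1}}.
\]
The eigenvector determinant, computed from \eqref{eq:eigenvectors} using $\kappa_1=\kappa_4>0$ from Lemma 3.2, works out to $\kappa_1\kappa_4 \rho_{m1}^2 u_{m1}^2(\lambda_4-\lambda_1)>0$; the Lax condition makes exactly one factor $\lambda_j-\sigma_{10}$ negative (namely $j=1$), so $\prod_j(\lambda_j-\sigma_{10})<0$. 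Combined with $\det\nabla_U W<0$ and the positive eigenvector determinant, the three signed factors produce a positive total.

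For the third determinant, which I expect to be the main obstacle, I would first compute $\partial_{\sigma_1} G_1(\sigma_{10},U_{m1})$ by implicit differentiation of the Rankine--Hugoniot conditions along the $1$-shock curve through $U_{m1}$: differentiating $\sigma_1 (W(G_1)-W(U_{m1}))=H(G_1)-H(U_{m1})$ in $\sigma_1$ gives a linear system of the form $A(U_b)\,\partial_{\sigma_1} G_1 = W(U_b) - W(U_{m1})$ (with $A(U_b) := \nabla_U H(U_b) - \sigma_{10}\nabla_U W(U_b)$), which pins down the tangent vector. Substituting into the determinant and exploiting the sparse form of $\mathbf{r}_2, \mathbf{r}_3, \mathbf{r}_4$ at $U_{m1}$ reduces the question to the sign of a single scalar, which I would verify by continuous deformation: as $\sigma_1 \to \lambda_1(U_{m1})^+$ (the weak shock limit, where $U_b \to U_{m1}$), the normalization $\mathbf{r}_1\cdot\nabla\lambda_1 = 1$ yields $\partial_{\sigma_1} G_1 \to \mathbf{r}_1(U_{m1})$, so the determinant reduces to the positive quantity of the previous step. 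Non-vanishing along the admissible shock-curve segment $\sigma_1 \in [\sigma_{10},\lambda_1(U_{m1}))$ is guaranteed by the transversality of the $1$-Hugoniot locus to $\mathrm{span}(\mathbf{r}_2, \mathbf{r}_3, \mathbf{r}_4)$ at $U_{m1}$, itself a consequence of the Lax entropy inequalities and the monotonicity \eqref{eq:densityineq} of $\sigma_1$ along the shock polar; continuity then delivers positivity at $\sigma_1 = \sigma_{10}$. What makes this part delicate is precisely that $\partial_{\sigma_1} G_1$ at the finite shock is not an eigenvector of $\nabla_U W^{-1}\nabla_U H$ at $U_{m1}$, so the clean factorization used in Steps 1--2 is no longer available and must be replaced either by a direct (but tedious) expansion based on the explicit Rankine--Hugoniot formulas \eqref{eq:14shocksdescribed} or by the deformation argument above.
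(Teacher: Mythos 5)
Your treatment of the first two determinants is correct, and it is in fact a cleaner, more structural route than the paper's: the paper simply records the explicit value of $\det A$ (deferring the algebra to \cite{shockwedges}) and reads off its sign from $\sigma_{10}^2 < \lambda_1(U_{m1})^2$, which is exactly the consequence of the entropy condition \eqref{eq:shockspeeds} that you use. Your factorization $\det A = \det\nabla_U W(U_{m1})\,\prod_j\bigl(\sigma_{10}-\lambda_j(U_{m1})\bigr)$, the identity $A\mathbf{r}_j=(\lambda_j-\sigma_{10})\nabla_U W\,\mathbf{r}_j$, and your value of $\det(\mathbf{r}_4,\mathbf{r}_3,\mathbf{r}_2,\mathbf{r}_1)$ all check out, and they recover the first two sign statements with essentially no matrix computation.

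The third determinant is where the real content of the lemma lies, and there your argument has a genuine gap. Since $A$ is invertible, non-vanishing of $\det(A\mathbf{r}_4,A\mathbf{r}_3,A\mathbf{r}_2,A\partial_{\sigma_1}G_1)$ is \emph{equivalent} to transversality of $\partial_{\sigma_1}G_1$ to $\mathrm{span}(\mathbf{r}_2,\mathbf{r}_3,\mathbf{r}_4)$ at $U_{m1}$; so when you justify non-vanishing along the deformation path by invoking that transversality "as a consequence of the Lax inequalities and \eqref{eq:densityineq}", you are asserting the conclusion rather than proving it --- neither the Lax conditions nor the monotonicity of $\sigma_1$ along the polar says anything about whether the Hugoniot tangent falls into that particular three-dimensional subspace for a shock of finite strength. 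The paper instead evaluates the determinant in closed form, obtaining (up to explicitly signed prefactors) the quantity $u_{m1}\lambda_4(U_{m1})P+\sigma_{10}u_{m1}Q$ with $Q<0$, so that the whole question reduces to proving $P>0$; that is done using the Rankine--Hugoniot relations \eqref{eq:rankine1} and \eqref{eq:rankine4} together with the quantitative bounds $u_{m1}<u_b<(1+\frac1\gamma)u_{m1}$ of Lemma \ref{lem:shock1speed}. Your proposal never engages with $P$ at all, even though it is defined in the statement precisely because its positivity is the crux. Finally, the one concrete step you do take contains a slip: differentiating $\sigma_1\bigl(W(G_1)-W(U_0)\bigr)=H(G_1)-H(U_0)$ in $\sigma_1$ gives
\[
\bigl(\nabla_U H(U_{m1})-\sigma_{10}\nabla_U W(U_{m1})\bigr)\,\partial_{\sigma_1}G_1 \;=\; W(U_{m1})-W(U_b),
\]
so the matrix is $A$ itself (Jacobians at the moving endpoint $U_{m1}$, not at $U_b$) and the right-hand side is $W(U_{m1})-W(U_b)$, opposite in sign to what you wrote; that sign error would flip the very determinant you are trying to show is positive.
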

	
\begin{proof}
We omit the full calculations, which can be found at \cite[pp. 299--300]{shockwedges}.
We use Lemma \ref{lem:shock1speed}, along with the first and fourth Rankine-Hugoniot
conditions \eqref{eq:rankine1} and \eqref{eq:rankine4},
to deduce that $P>0$.
We then use Lemma \ref{lem:shock1speed} and the entropy condition \eqref{eq:shockspeeds}
to deduce
\begin{align*}
&\mathrm{det} A = \frac{\sigma_{10}^2 \rho_{m1}^3 u_{m1}^3}{\gamma-1} \brac*{\lambda_1^2(U_{m1}) - \sigma_{10}^2} \brac*{u^2_{m1} - c_{m1}^2} > 0, \\
&\mathrm{det}(A \mathbf{r_4}, A \mathbf{r_3}, A \mathbf{r_2}, A \partial_{\sigma_1} G_1(\sigma_{10},U))|_{U = U_{m1}} \\
&\quad = \frac{\kappa_4(U_{m1})^2\sigma_{10} \rho_{b} v_b \rho_{m1}^3 u_{m1}^2 }{ \lambda_4(U_{m1})}
  \big(\sigma_{10} - \lambda_4(U_{m1})\big)\big( u_{m1} \lambda_4(U_{m1}) P + \sigma_{10} u_{m1} Q\big) > 0,
\end{align*}
where $Q = - \frac{c_+^2}{\gamma-1} < 0$.
\end{proof}

\begin{lemma}\label{lem:paramstrong1}
There exists a neighborhood $O_\epsilon(U_{b}) \times O_{\hat \epsilon}(\sigma_{10})$ such that,
for each $U_0 \in O_\epsilon(U_b)$, the shock polar $S_1(U_b)$ can be parameterized locally for
the state which connects to $U_0$ by a shock of speed $\sigma_{10}$ from above as
\[
\sigma_1 \mapsto G_1(\sigma_1, U_0)\,\, \qquad \text{for } \,\, |\sigma_1 - \sigma_{10}| < \hat \epsilon.
\]
\end{lemma}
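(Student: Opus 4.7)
The plan is to apply the implicit function theorem to the Rankine--Hugoniot system \eqref{eq:rankine1}--\eqref{eq:rankine4}, viewing the downstream state $U$ as the unknown to be solved for in terms of the shock speed $\sigma_1$ and the upstream state $U_0$ near the background data $(\sigma_{10}, U_b, U_{m1})$.

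First, I would package the four jump relations into a single vector-valued map
\[
F(\sigma_1, U_0, U) := \sigma_1\bigl(W(U_0) - W(U)\bigr) - \bigl(H(U_0) - H(U)\bigr),
\]
viewed as $F:\R\times\R^4\times\R^4 \to \R^4$. By the very definition of the background four-wave configuration, $F(\sigma_{10}, U_b, U_{m1}) = 0$, so the zero set of $F$ passes through this base point; and the zero set locally records exactly those triples $(\sigma_1, U_0, U)$ for which $U_0$ and $U$ are joined by a discontinuity of speed $\sigma_1$.

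Next, I would differentiate $F$ with respect to its third argument at the reference point. A direct calculation gives
\[
\partial_U F\bigl|_{(\sigma_{10}, U_b, U_{m1})} = \nabla_U H(U_{m1}) - \sigma_{10}\,\nabla_U W(U_{m1}) = A,
\]
where $A$ is precisely the matrix introduced in Lemma~\ref{lem:matrixa}. Since that lemma has already supplied $\det A > 0$, this Jacobian is nonsingular at the base point. The implicit function theorem then produces neighborhoods $O_\epsilon(U_b)$ and $O_{\hat\epsilon}(\sigma_{10})$, together with a $C^1$ map $(\sigma_1, U_0)\mapsto G_1(\sigma_1, U_0)$, satisfying
\[
F\bigl(\sigma_1, U_0, G_1(\sigma_1, U_0)\bigr) \equiv 0, \qquad G_1(\sigma_{10}, U_b) = U_{m1}.
\]

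Finally, I would verify that for each fixed $U_0\in O_\epsilon(U_b)$ the curve $\sigma_1\mapsto G_1(\sigma_1, U_0)$ traces the physically admissible $1$-shock branch of $S_1(U_0)$ through $U_0$: the Rankine--Hugoniot relations hold by construction, while the entropy/admissibility conditions collected in \eqref{eq:shockspeeds}, together with $\sigma_1 < 0$ and $\rho > \rho_0$, propagate by continuity from the base point using the strict inequalities established in Lemma~\ref{lem:shock1speed}. The bulk of the real work lies in the Jacobian computation of Lemma~\ref{lem:matrixa}; once $\det A > 0$ is in hand, the present statement is essentially a direct invocation of the implicit function theorem, so I do not foresee any substantive obstacle beyond bookkeeping the neighborhood sizes $\epsilon$ and $\hat\epsilon$.
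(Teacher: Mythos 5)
Your proposal is correct and follows essentially the same route as the paper: both set up the Rankine--Hugoniot relations as a map $F(\sigma_1,U_0,U)$ vanishing at $(\sigma_{10},U_b,U_{m1})$, identify $\partial_U F$ at the base point with the matrix $A$ of Lemma~\ref{lem:matrixa} (up to an irrelevant overall sign), and invoke $\det A>0$ together with the implicit function theorem. Your closing remarks on propagating the admissibility conditions of \eqref{eq:shockspeeds} by continuity are a harmless addition beyond what the paper records.
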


\begin{proof} It suffices to solve
\[
\sigma_1 \big(W(U) - W(U_0)\big) - \big(H(U) - H(U_0)\big) = 0
\]
for $U$ in terms of $\sigma_1$ and $U_0$,
with the knowledge that $(U_0,\sigma_1,U) = (U_b, \sigma_{10},U_{m_1})$ is a solution.
We see that
\begin{align*}
\mathrm{det}  \big(\nabla_U(\sigma (W(U) - W(U_0)) - (H(U) - H(U_0)))\big)
\bigg|_{U_0 = U_b,\sigma_1=\sigma_{10}, U = U_{m1} }= \mathrm{det}A > 0.
\end{align*}
Then the result follows by the implicit function theorem.
\end{proof}
	
\subsubsection{Riemann problem involving a strong $4$--shock}

We now extend our results about $1$--shocks to $4$--shocks by symmetry.
For a fixed $U_1$, when $U_2 \in S_4(U_1)$,
we use $\{ U_1, U_2 \} = (0,0,0,\sigma_4)$ to denote the $4$--shock that connects $U_1$
to $U_2$ with speed $\sigma_4$.
The only difference is the formula for $\sigma_4$.

\begin{lemma}\label{lem:shock4speed}
For all $U_1 \in O_\epsilon(U_{m2}), U_2 \in O_\epsilon(U_a) \cap S_1(U_1)$, and $\sigma_{3} \in O_{\hat \epsilon}(\sigma_{40})$
with $\{ U_1, U_2 \} = (0,0,0,\sigma_4)$,
\[
\sigma_{4} > 0, \qquad u_1 < u_2 < \big(1 + \frac{1}{\gamma}\big) u_1.
\]
\end{lemma}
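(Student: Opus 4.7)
The plan is to mirror the proof of Lemma \ref{lem:shock1speed} verbatim, using the structural symmetry between the first and fourth characteristic families for the steady Euler system. Indeed, as observed in the introduction, the transformation $(x,y,u,v)\mapsto(-x,-y,-u,-v)$ leaves system \eqref{eq:euler} invariant and sends a $4$--shock into a $1$--shock; this already signals that the desired estimates for $\sigma_4$ will be the exact mirror images of those for $\sigma_1$, with the sign of the speed reversed.

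First, I would compute $\sigma_{40}$ directly from \eqref{eq:14shockspeed} at the unperturbed state $U_{m2}=(u_{m2},0,p_{m2},\rho_{m2})$: since $v_{m2}=0$, the formula collapses to
\[
\sigma_{40} \;=\; \frac{\overline c}{\sqrt{u_{m2}^2-\overline c^2}} \;>\; 0,
\]
because the $j=4$ branch picks up $+\overline c$ rather than $-\overline c$. This handles the strict inequality $\sigma_{40}>0$ on the background, and then by continuity $\sigma_4>0$ holds throughout $O_{\hat\epsilon}(\sigma_{40})$.

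Next, I would combine the first two Rankine--Hugoniot conditions \eqref{eq:rankine1}--\eqref{eq:rankine2}, now written across the strong $4$--shock connecting $U_{m2}$ (back state) to $U_a$ (front state), to eliminate $v_a$ and obtain the analogue
\[
p_a - p_{m2} \;=\; \rho_{m2}u_{m2}\,(u_a - u_{m2}).
\]
The entropy condition \eqref{eq:shockspeeds} for a $4$--shock forces $p_a>p_{m2}$, which immediately gives $u_a>u_{m2}$, i.e. $u_2>u_1$. For the upper bound, I would use $p_{m2}=\rho_{m2}c_{m2}^2/\gamma>0$ together with $u_{m2}>c_{m2}$ in exactly the way used in Lemma \ref{lem:shock1speed}:
\[
\tfrac{1}{\gamma}\rho_{m2}c_{m2}^2 \;=\; p_{m2} \;<\; p_a \;=\; p_{m2}+\rho_{m2}u_{m2}(u_a-u_{m2}),
\]
which, after rearrangement and the use of $u_{m2}>c_{m2}$, yields $u_a<\bigl(1+\tfrac1\gamma\bigr)u_{m2}$. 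The result then propagates to the whole neighborhood $O_\epsilon(U_{m2})\times O_\epsilon(U_a)\cap S_4(U_1)\times O_{\hat\epsilon}(\sigma_{40})$ by continuity of the Rankine--Hugoniot relations, as in Lemma \ref{lem:shock1speed}.

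The only genuine point requiring care, and the one place where a sign error would flip the sense of the inequalities, is the identification of which of $U_1,U_2$ is the back state of the $4$--shock and hence how the jumps $[\,\cdot\,]$ in \eqref{eq:rankine1}--\eqref{eq:rankine4} are oriented; since a $4$--shock travels with positive speed $\sigma_4>0$, the state $U_2=U_a$ lies above and in front of the shock while $U_1=U_{m2}$ lies below and behind, which is the opposite convention from the $1$--shock situation. Once this convention is fixed, the computations are routine and exactly parallel to those recorded in Lemma \ref{lem:shock1speed}, so no new analytic input is needed beyond what was used for the $1$--shock case.
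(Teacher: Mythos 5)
Your overall strategy --- rerun the proof of Lemma \ref{lem:shock1speed} under the $1\leftrightarrow 4$ symmetry, with $(u_{m2},u_a)$ in place of $(u_{m1},u_b)$ --- is exactly what the paper does, and your treatment of the sign of the speed ($\sigma_{40}=\overline c/\sqrt{u_{m2}^2-\overline c^2}>0$ at the unperturbed state, then continuity) is fine. However, the velocity bounds contain a genuine error. Across the strong $4$--shock the gas passes from the region above to the region below (by the third line of \eqref{eq:shockspeeds}, $\lambda_{2,3}(\text{above})<\sigma_4$, so particle paths in region $(4)$ are overtaken by the shock), so $U_a$ is the upstream, uncompressed state and $U_{m2}$ the downstream, compressed one; hence $p_{m2}>p_a$, not $p_a>p_{m2}$ as you assert. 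Likewise, eliminating $v_a$ from \eqref{eq:rankine1}--\eqref{eq:rankine2} gives
\[
p_{m2}-p_a=\rho_{m2}u_{m2}\,(u_a-u_{m2}),
\]
with the opposite sign from your display. These two sign errors cancel in the lower bound, so you still (accidentally) obtain the correct conclusion $u_a>u_{m2}$.

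They do not cancel in the upper bound. Your displayed chain $\frac{1}{\gamma}\rho_{m2}c_{m2}^2=p_{m2}<p_a=p_{m2}+\rho_{m2}u_{m2}(u_a-u_{m2})$ reduces to $0<\rho_{m2}u_{m2}(u_a-u_{m2})$, which is just the lower bound again; no rearrangement of it yields $u_a<\bigl(1+\frac1\gamma\bigr)u_{m2}$. The step that actually delivers the upper bound, mirroring Lemma \ref{lem:shock1speed}, is positivity of the \emph{upstream} pressure: $p_a>0$ gives
\[
\rho_{m2}u_{m2}(u_a-u_{m2})=p_{m2}-p_a<p_{m2}=\tfrac{1}{\gamma}\rho_{m2}c_{m2}^2<\tfrac{1}{\gamma}\rho_{m2}u_{m2}^2,
\]
using $u_{m2}>c_{m2}$, whence $u_a-u_{m2}<u_{m2}/\gamma$. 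With the pressure inequality and the jump relation corrected in this way, your argument coincides with the paper's.
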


\begin{proof}
Note that, by \eqref{eq:shockspeeds},
\[
\sigma_4 = \frac{u_a v_a +  \overline c \sqrt{u_a^2 + v_a^2 - \overline c^2}}{u_a^2 - \overline c^2} > 0.
\]
Since
we have the same Rankine-Hugoniot conditions
as in Lemma \ref{lem:shock1speed},
with $u_{m2}$ and $u_a$ taking the roles of $u_{m1}$ and $u_b$, respectively,
the proof follows identically.
\end{proof}

\begin{lemma}\label{lem:paramstrong4}
There exists a neighborhood $O_\epsilon(U_{a}) \times O_{\hat \epsilon}(\sigma_{40})$
such that,
for each $U_0 \in O_\epsilon(U_a)$, the reverse shock polar $\tilde S_2(U_0)$ -- the set of states that
connect to $U_0$ by a strong $4$--shock from below -- can be parameterized locally for
the state which connects to $U_0$ by a shock of speed $\sigma_{40}$ as
\[
\sigma_4 \mapsto G_4(\sigma_4, U_0),
\]
with $G_4 \in C^2$ near $(\sigma_{40},U_{m2})$.
\end{lemma}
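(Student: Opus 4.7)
The plan is to mimic the proof of Lemma~\ref{lem:paramstrong1}: express the strong $4$--shock from $U_0$ (above) to an unknown state $U$ (below) as the zero set of
\[
F(\sigma_4, U; U_0) := \sigma_4\bigl(W(U) - W(U_0)\bigr) - \bigl(H(U) - H(U_0)\bigr),
\]
note that $F(\sigma_{40}, U_{m2}; U_a) = 0$ by construction of the background solution, and then appeal to the implicit function theorem to solve for $U = G_4(\sigma_4, U_0)$ in a neighborhood. The whole argument therefore reduces to verifying that
\[
\nabla_U F\bigl(\sigma_{40}, U_{m2}; U_a\bigr) = \sigma_{40}\nabla_U W(U_{m2}) - \nabla_U H(U_{m2}) = -A'
\]
is invertible, where $A' := \nabla_U H(U_{m2}) - \sigma_{40}\nabla_U W(U_{m2})$.

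To this end I would first establish the $4$--shock analogue of Lemma~\ref{lem:matrixa}, namely $\det A' \ne 0$. By Lemma~\ref{lem:shock4speed} one has $\sigma_{40} > 0$ and $u_a < u_{m2} < (1 + \tfrac{1}{\gamma}) u_a$, and the entropy condition \eqref{eq:shockspeeds} gives $\lambda_4(U_a) < \sigma_{40} < \lambda_4(U_{m2})$. The same row-by-row determinant expansion carried out in \cite{shockwedges} for the $1$--shock, with the family indices relabeled via the $1$--shock $\leftrightarrow$ $4$--shock symmetry (equivalently, the $(x,y)\mapsto(-x,-y)$, $(u,v)\mapsto(-u,-v)$ invariance of the Euler system noted in the Introduction), produces
\[
\det A' = \frac{\sigma_{40}^2\,\rho_{m2}^3\,u_{m2}^3}{\gamma-1}\bigl(\lambda_1^2(U_{m2}) - \sigma_{40}^2\bigr)\bigl(u_{m2}^2 - c_{m2}^2\bigr),
\]
which is nonzero since $\lambda_1(U_{m2}) < 0 < \sigma_{40}$ are kept apart by the entropy condition on the $4$--shock, and $u_{m2} > c_{m2}$ by supersonicity of the background state.

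Once $\det A' \ne 0$ is in hand, the implicit function theorem applied to the $C^2$ map $F$ yields a $C^2$ function $G_4$ on some neighborhood of $(\sigma_{40}, U_a)$ with $G_4(\sigma_{40}, U_a) = U_{m2}$ and $F(\sigma_4, G_4(\sigma_4, U_0); U_0) \equiv 0$. Continuity of $G_4$ guarantees that, after possibly shrinking $O_\epsilon(U_a) \times O_{\hat\epsilon}(\sigma_{40})$, the output $G_4(\sigma_4, U_0)$ remains in the region $\{\rho > \rho_0,\, u > c\}$, so the constructed state is an admissible $4$--shock state and genuinely lies on the physically relevant branch $\tilde S_2(U_0)$.

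The only delicate point — the main obstacle — is keeping the signs straight when transporting the $1$--shock determinant calculation to the $4$--shock setting. In Lemma~\ref{lem:matrixa} one has $\sigma_{10} < 0$ and $U_{m1}$ sits below $U_b$, whereas here $\sigma_{40} > 0$ and the above/below orientation is reversed. One must therefore verify that the Euler symmetry $(x,y,u,v)\mapsto(-x,-y,-u,-v)$ really does convert the cofactor expansions of Lemma~\ref{lem:matrixa} into the claimed formula for $\det A'$ without spoiling the strict positivity of the factor $\lambda_1^2(U_{m2}) - \sigma_{40}^2$. Once this symmetry check is carried out, invertibility is automatic and the implicit function theorem delivers the parameterization essentially for free.
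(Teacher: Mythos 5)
Your proposal is correct and follows essentially the same route as the paper: the paper gives no separate proof of this lemma, instead invoking the $1$--shock/$4$--shock symmetry to transfer the argument of Lemma \ref{lem:paramstrong1}, which is precisely the implicit-function-theorem argument you carry out, with the invertibility of the Jacobian supplied by the $4$--shock analogue of the determinant computation in Lemma \ref{lem:matrixadet}. Your explicit sign check on the factor $\lambda_1^2(U_{m2})-\sigma_{40}^2$ via the entropy condition $\lambda_4(U_a)<\sigma_{40}<\lambda_4(U_{m2})$ is exactly the point the paper leaves implicit.
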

	
\subsubsection{Riemann problem involving strong vortex sheets and entropy waves}

We now look at the interaction between weak waves and the strong vortex sheet/entropy wave,
based on those in \S 2.5 of \cite{compvortex}.
For any $U_1 \in O_\epsilon(U_{m1})$ and $U_2 \in O_\epsilon(U_{m2})$,
we use $\{ U_1, U_2 \} = (0,\sigma_2,\sigma_3,0)$ to denote the strong vortex sheet and entropy wave
that connect $U_1$ to $U_2$ with strength $(\sigma_2,\sigma_3)$.
That is,		
\[
U_{\rm mid} = \Phi_2(\sigma_2,U_1) = (u_1 e^{\sigma_2},v_1 e^{\sigma_2}, p_1, \rho_1),
\,\,  U_2 = \Phi(\sigma_3, U_{\rm mid}) = (u_{\rm mid},v_{\rm mid},p_{\rm mid},\rho_{\rm mid} e^{\sigma_3}).
\]
In particular, we have
\[
U_{2} = (u_{1} e^{\sigma_{2}}, v_1 e^{\sigma_2}, p_1, \rho_1 e^{\sigma_3}).
\]
By a straightforward calculation, we have

\begin{lemma}\label{lem:vortexcalc}
For
\[
G_{2,3}(\sigma_3,\sigma_2,U)
:= \Phi_3(\sigma_3; \Phi_2(\sigma_2; U)) = (e^{\sigma_{20}}u,e^{\sigma_{20}}v, p, e^{\sigma_{30}}\rho)
\qquad \mbox{for any $U \in O_\epsilon(U_{m1})$},
\]
then
\begin{align*}	
\partial_{\sigma 2} G_{2,3}(\sigma_3,\sigma_2;U) &= (u e^{\sigma_2}, v e^{\sigma_2},0, 0)^\top, \\
			\partial_{\sigma 3} G_{2,3}(\sigma_3,\sigma_2;U) &= (0, 0 , 0 , \rho e^{\sigma_3} )^\top, \\
			\nabla_U G_{2,3} (\sigma_3,\sigma_2,U) &= \mathrm{diag}( e^{\sigma_2}, e^{\sigma_2}, 1 , e^{\sigma_3}).
\end{align*}
\end{lemma}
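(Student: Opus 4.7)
The plan is a direct composition-and-differentiation computation, since by definitions \eqref{eq:param2sheet} and \eqref{eq:param3sheet} the maps $\Phi_2$ and $\Phi_3$ act on disjoint coordinate blocks of $U=(u,v,p,\rho)$: $\Phi_2$ only scales $(u,v)$ and $\Phi_3$ only scales $\rho$. So the composition is essentially explicit.

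First I would write out the composition componentwise. Starting from
\[
\Phi_2(\sigma_2;U)=(ue^{\sigma_2},\,ve^{\sigma_2},\,p,\,\rho),
\]
and applying $\Phi_3$ (which leaves the first three slots fixed and multiplies the last by $e^{\sigma_3}$), we obtain
\[
G_{2,3}(\sigma_3,\sigma_2;U)=(ue^{\sigma_2},\,ve^{\sigma_2},\,p,\,\rho e^{\sigma_3}).
\]
From this closed-form expression the three derivative formulas follow by inspection: differentiating in $\sigma_2$ only touches the first two components and gives $(ue^{\sigma_2},ve^{\sigma_2},0,0)^\top$; differentiating in $\sigma_3$ only touches the last component and gives $(0,0,0,\rho e^{\sigma_3})^\top$; and since each component of $G_{2,3}$ depends only on one of the components of $U$, the Jacobian matrix $\nabla_U G_{2,3}$ is diagonal with entries $(e^{\sigma_2},e^{\sigma_2},1,e^{\sigma_3})$.

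The only mild subtlety is making sure that the local representations \eqref{eq:param2sheet}--\eqref{eq:param3sheet} really are global enough in $\sigma_2,\sigma_3$ for the composition above to be well defined on the neighborhood $O_\epsilon(U_{m1})$. But since the ODEs $\tfrac{dU}{d\sigma_2}=\mathbf{r}_2(U)$ and $\tfrac{dU}{d\sigma_3}=\mathbf{r}_3(U)$ admit the explicit solutions given in \eqref{eq:param2sheet}--\eqref{eq:param3sheet} for all $\sigma_2,\sigma_3\in\mathbb{R}$, no restriction beyond the choice of $U\in O_\epsilon(U_{m1})$ is needed.

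There is no substantive obstacle here: the lemma is a bookkeeping statement that records the Jacobian of a composition of two commuting one-parameter linear rescalings, and it will be invoked later (presumably in the wave-interaction estimates of \S4) to linearize the strong vortex sheet / entropy wave step of the Riemann problem. The proof is therefore a one-line calculation once the composition is written down.
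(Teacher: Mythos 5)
Your proposal is correct and matches the paper exactly: the paper offers no proof beyond the phrase ``by a straightforward calculation,'' and the intended calculation is precisely the componentwise composition and differentiation you carry out (your reading of the $\sigma_{20},\sigma_{30}$ in the displayed formula as typos for $\sigma_2,\sigma_3$ is the right one, since the stated derivatives involve $e^{\sigma_2}$ and $e^{\sigma_3}$).
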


The next property allows us to estimate the strength of reflected weak waves
in the interactions between the strong vortex sheet/entropy wave and weak waves:
	
\begin{lemma}\label{lem:vortexdet}
The following holds{\rm :}
\[
\mathrm{det}\brac* { \mathbf r_4(U_{m2}), \partial_{\sigma_3} G_{2,3}(\sigma_{30},\sigma_{20}; U_{m1}), \partial_{\sigma_2} G_{2,3}(\sigma_{30},\sigma_{20}; U_{m1}),
 \nabla_U G(\sigma_{30}, \sigma_{20}, U_{m1}) \cdot \mathbf{r}_1(U_1)  } > 0.
 \]
\end{lemma}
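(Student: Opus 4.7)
The plan is to reduce the $4 \times 4$ determinant to an expression that is manifestly positive, via two successive cofactor expansions exploiting the zeros generated by the identity $v_{m1} = v_{m2} = 0$ for the background middle states. First I would substitute the explicit formulas from Lemma \ref{lem:vortexcalc} together with \eqref{eq:eigenvectors} at the relevant states. Because $v_{m1} = 0$, one obtains
\[
\partial_{\sigma_2} G_{2,3}(\sigma_{30},\sigma_{20};U_{m1}) = (u_{m2}, 0, 0, 0)^\top, \qquad \partial_{\sigma_3} G_{2,3}(\sigma_{30},\sigma_{20};U_{m1}) = (0, 0, 0, \rho_{m2})^\top,
\]
using $u_{m2} = u_{m1}e^{\sigma_{20}}$ and $\rho_{m2} = \rho_{m1}e^{\sigma_{30}}$. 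Similarly, the entries $\rho(\lambda u - v)$ appearing in $\mathbf{r}_1(U_{m1})$ and $\mathbf{r}_4(U_{m2})$ collapse to $\rho \lambda u$, and $\nabla_U G_{2,3}(\sigma_{30},\sigma_{20};U_{m1})$ acts componentwise on $\mathbf{r}_1(U_{m1})$ through the diagonal factors $(e^{\sigma_{20}}, e^{\sigma_{20}}, 1, e^{\sigma_{30}})$.

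With these substitutions, the column built from $\partial_{\sigma_2} G_{2,3}$ has the single nonzero entry $u_{m2}$ in the first row, and the column built from $\partial_{\sigma_3} G_{2,3}$ has the single nonzero entry $\rho_{m2}$ in the fourth row. Expanding first along one and then along the other of these sparse columns reduces the computation to the single $2 \times 2$ determinant
\[
\det \begin{pmatrix} \kappa_4(U_{m2}) & \kappa_1(U_{m1}) e^{\sigma_{20}} \\ \kappa_4(U_{m2}) \rho_{m2} \lambda_4(U_{m2}) u_{m2} & \kappa_1(U_{m1}) \rho_{m1} \lambda_1(U_{m1}) u_{m1} \end{pmatrix},
\]
and, up to the cofactor signs, the full determinant is equal to $\rho_{m2}\, u_{m2}$ times this $2 \times 2$ determinant.

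Reading off the sign is then immediate. At the states $U_{m1}$ and $U_{m2}$, where $v = 0$, the eigenvalue formula at the end of \S 3.1 gives $\lambda_1(U_{m1}) < 0 < \lambda_4(U_{m2})$, and the positivity lemma for the normalization constants stated after \eqref{eq:eigenvectors} yields $\kappa_1(U_{m1}), \kappa_4(U_{m2}) > 0$. The $2 \times 2$ determinant thus equals $\kappa_1 \kappa_4$ times $\rho_{m1}\lambda_1(U_{m1}) u_{m1} - e^{\sigma_{20}}\rho_{m2}\lambda_4(U_{m2}) u_{m2}$, which is strictly negative since the first term is negative and the second is subtracted off. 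Combined with the overall minus sign produced by the two cofactor expansions, the stated determinant is strictly positive. The only real obstacle is the sign bookkeeping in the two cofactor expansions; no analytic estimate is required, as the supersonic structure at $U_{m1}, U_{m2}$ makes every sign definite.
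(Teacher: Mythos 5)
Your proposal is correct and follows essentially the same route as the paper: a direct evaluation of the $4\times 4$ determinant using the explicit formulas from Lemma \ref{lem:vortexcalc} and \eqref{eq:eigenvectors} at the background states, where $v_{m1}=v_{m2}=0$ makes the $\partial_{\sigma_2}G_{2,3}$ and $\partial_{\sigma_3}G_{2,3}$ columns sparse and forces $\lambda_1(U_{m1})<0<\lambda_4(U_{m2})$ and $\kappa_1(U_{m1}),\kappa_4(U_{m2})>0$. Your cofactor expansion reproduces, after substituting $u_{m2}=u_{m1}e^{\sigma_{20}}$ and $\rho_{m2}=\rho_{m1}e^{\sigma_{30}}$, exactly the paper's closed form $\kappa_1(U_{m1})\kappa_4(U_{m2})\rho_{m1}^2u_{m1}^2e^{\sigma_{20}+\sigma_{30}}\bigl(\lambda_4(U_{m2})e^{2\sigma_{20}+\sigma_{30}}+\lambda_4(U_{m1})\bigr)>0$.
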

	
\begin{proof} By a direct calculation,
\begin{align*}	
&\mathrm{det}\brac* { \mathbf r_4(U_{m2}), \partial_{\sigma_3} G_{2,3}(\sigma_{30},\sigma_{20}; U_{m1}),
  \partial_{\sigma_2} G_{2,3}(\sigma_{30},\sigma_{20}; U_{m1}), \nabla_U G(\sigma_{30}, \sigma_{20}, U_{m1}) \cdot \mathbf{r}_1(U_1)} \\[2mm]
&= \kappa_1(U_{m1}) \kappa_4(U_{m2})
\left| \begin{array}{cccc}  \lambda_4(U_{m2}) & 0 & u_{m1} e^{\sigma_{20}} & - e^{\sigma_{20}}\lambda_1(U_{m1}) \\[2mm]
				1 & 0 & 0 & e^{\sigma{20}} \\[2mm]
				\rho_{m2} u_{m2} \lambda_4(U_{m2}) & 0 & 0 & \rho_{m1} u_{m1} \lambda_1(U_{m1}) \\[2mm]
				\frac{\rho_{m2} u_{m2} \lambda_4(U_{m2})}{c_{m2}^2} & \rho_{m1} e^{\sigma_{30}} & 0 & e^{\sigma_{30}} \frac{\rho_{m1} u_{m1} \lambda_1(U_{m1})}{c_{m1}^2}
             \end{array} \right| \\[2mm]
&= \kappa_1(U_{m1}) \kappa_4(U_{m2}) \rho_{m1}^2 u_{m1}^2 e^{\sigma_{20} + \sigma_{30}} \brac*{\lambda_4(U_{m2}) e^{2 \sigma_{20}
 + \sigma_{30}} + \lambda_4(U_{m1}) } > 0.
\end{align*}
\end{proof}

\section{Estimates on the Wave Interactions}\label{chapter:waves}
	
In this section, we make estimates on the wave interactions,
especially between the strong and weak waves.
This is based on those in \S  3 of  \cite{shockwedges,compvortex}, with new estimates for the strong $4$--shock.
		
Below, $M>0$ is a universal constant which is understood to be large,
and $O_\epsilon(U_i)$ for $i \in \{a, m1, m2, b\}$ is a universal small neighborhood of $U_i$
which is understood to be small.
Each of them depends only on the system, which may be different at each occurrence.
	
\begin{figure}[h!]
\includegraphics[scale=1.0]{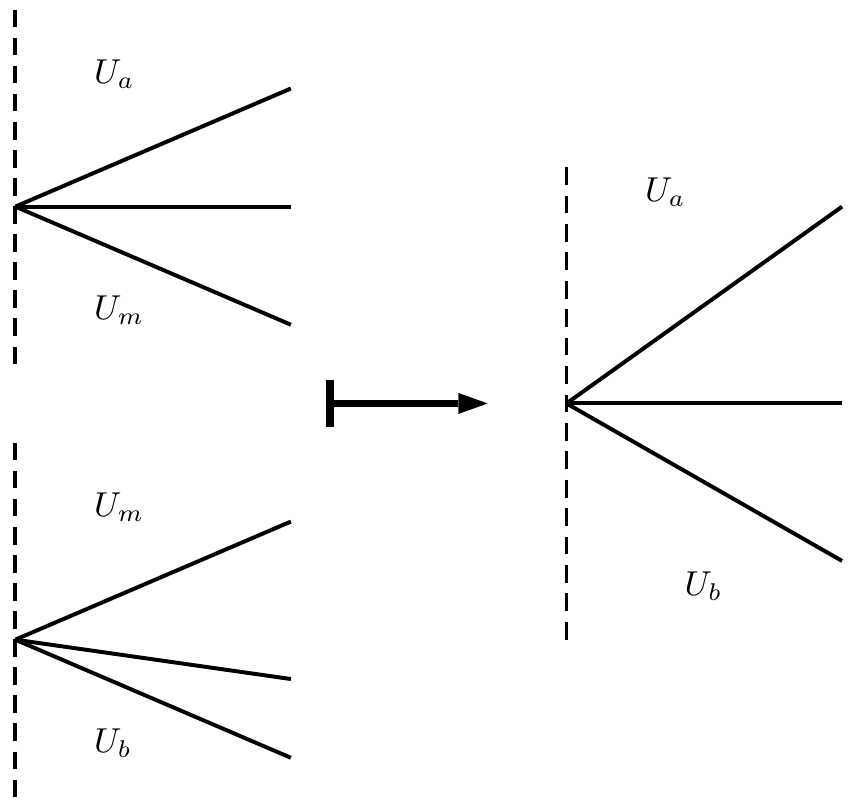}
\centering
\caption{Interaction estimates}
\label{fig:interactiondiagram}
\end{figure}

\subsection{Preliminary identities}
	
To make later arguments more concise, we now state some elementary identities here to be used later;
these are simple consequences of the fundamental theorem of calculus.
	
\begin{lemma}\label{lem:1varest}
The following identities hold{\rm :}	
\begin{itemize}
\item	If $f \in C^1([a,b])$ with $a < 0 < b$, then, for any $x \in [a,b]$,
\[
f(x) - f(0)  =  x \int_0^1 f_x(rx) \, dr;
\]
\item
If $g \in C^2([a,b]^2)$ with $a < 0 < b$, then, for any $(x,y) \in [a,b]^2$,
\[
f(x,y) - f(x,0) - f(0,y) + f(0,0) = xy \int_0^1 \int_0^1 f_{xy}(rx,sy) \, dr \, ds;
\]
\item If $f \in C^1([a,b])$, then
\[
f(x) = f(0) + O(1) |x|;
\]
\item	If $f \in C^2( [a,b]^3)$, then
\[
f(x,y,z) = f(x,0,0) + f(0,y,z) - f(0,0,0) +  O(1) |x| ( |y| + |z|);
\]
\item 	If $g \in C^2([a,b]^4)$, then
\[
g(x,y,z,w) = g(x,0,0,0) + g(0,y,z,w) - g(0,0,0,0) +  O(1) |x|( |y| + |z| + |w|).
\]
\end{itemize}
\end{lemma}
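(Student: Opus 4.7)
The plan is to treat the five bullets as a straightforward exercise in the fundamental theorem of calculus plus compactness, handling them in the order given since each builds on the previous ones.

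For the first identity, I would simply apply the chain rule: setting $\varphi(r) = f(rx)$ on $[0,1]$, the chain rule gives $\varphi'(r) = x f_x(rx)$, and the fundamental theorem of calculus yields $f(x) - f(0) = \varphi(1) - \varphi(0) = \int_0^1 \varphi'(r)\,dr = x\int_0^1 f_x(rx)\,dr$. This requires $rx \in [a,b]$ for $r \in [0,1]$, which is automatic since $a < 0 < b$.

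For the second identity, I would apply the first identity twice. First, regarding $f(x,y) - f(x,0)$ as a function of $y$ with $x$ as parameter, the first identity gives $f(x,y) - f(x,0) = y \int_0^1 f_y(x, sy)\,ds$; similarly $f(0,y) - f(0,0) = y \int_0^1 f_y(0, sy)\,ds$. Subtracting and then applying the first identity a second time in the $x$-variable to the integrand $f_y(x,sy) - f_y(0,sy) = x\int_0^1 f_{xy}(rx, sy)\,dr$ (which is valid by $C^2$ regularity and Clairaut's theorem if needed for symmetry), then interchanging the order of integration (justified by continuity of $f_{xy}$ on the compact rectangle), produces the stated double integral representation.

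For the third identity, I would use the first one together with compactness: since $f \in C^1([a,b])$, $f_x$ is continuous on the compact interval $[a,b]$, hence bounded by some $M$. Then $|f(x) - f(0)| = |x|\,\bigl|\int_0^1 f_x(rx)\,dr\bigr| \le M|x|$, which is the $O(1)|x|$ conclusion. The fourth and fifth identities follow the same template, but using the second identity as the workhorse: group the last variables into a single vector argument $\vec{y} = (y,z)$ or $\vec{y} = (y,z,w)$, apply the mixed-difference formula in the pair $(x, \vec{y})$ to write the ``cross'' remainder $f(x,\vec{y}) - f(x,\vec 0) - f(0,\vec{y}) + f(0,\vec 0)$ as a double integral of $\nabla_{\vec{y}} f_x$ evaluated along segments, and bound using that all relevant mixed partials are continuous on the compact cube $[a,b]^k$, yielding the $O(1)|x|(|y|+|z|)$ or $O(1)|x|(|y|+|z|+|w|)$ estimate after rearrangement.

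None of this is deep; the main thing to be careful about is bookkeeping, namely verifying $C^2$ regularity is enough to justify swapping the order of integration and using equality of mixed partials in the second identity, and correctly regrouping terms so that the ``cross'' remainder identified in bullets four and five matches the two-variable form of bullet two. Since the ambient functions in later sections (the flux maps $W, H$ and the Riemann parameterizations $\Phi, G_1, G_4, G_{2,3}$) are smooth away from vacuum, the hypotheses of the lemma will be satisfied automatically wherever it is invoked.
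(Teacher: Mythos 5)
Your proof is correct and is exactly the route the paper intends: the paper offers no written proof of this lemma, remarking only that the identities are ``simple consequences of the fundamental theorem of calculus,'' which is precisely your argument (FTC plus the chain rule for the first two bullets, then compactness of $[a,b]^k$ to bound the resulting integrands for the $O(1)$ estimates). The bookkeeping points you flag---that $C^2$ regularity justifies the iterated-integral manipulation, and that the cross-difference in the last two bullets reduces to the two-variable identity applied to $(x,\vec y)$---are handled correctly.
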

	
\subsection{Estimates on weak wave interactions}
We have the following standard proposition;
see, for example, \cite[Chapter 19]{smoller} for the proof.
Note that, in our analysis of the Glimm functional,
we only require the estimates for the cases where the waves are approaching.

\begin{proposition}\label{prop:weakwaves}
Suppose that $U_1,U_2$, and $U_3$ are three states in a small neighborhood of a given state $U_0$ with
		\begin{align*}
			\{ U_1,U_2 \} &= (\alpha_1,\alpha_2,\alpha_3,\alpha_4), \\
			\{ U_2, U_3 \} &= (\beta_1,\beta_2,\beta_3,\beta_4), \\
			\{ U_1,U_3 \} &= (\gamma_1,\gamma_2,\gamma_3,\gamma_4).
		\end{align*}
Then
\begin{equation}\label{eq:weakwaves}
\gamma_i = \alpha_i + \beta_i + O(1) \Delta(\alpha,\beta)
\end{equation}
with
\[
\Delta(\alpha,\beta)
= |\alpha_4| |\beta_1| + |\alpha_3||\beta_1| + |\alpha_2||\beta_1| + |\alpha_4||\beta_2| + |\alpha_4| |\beta_3|
+ \sum_{j=1,4} \Delta_j(\alpha,\beta),
\]
where
\[
\Delta_j(\alpha,\beta)
= \begin{cases} 0, &\,\, \alpha_j \geq 0, \beta_j \geq 0, \\
		|\alpha_j| |\beta_j|, &\,\, \text{otherwise}.
\end{cases}
\]
\end{proposition}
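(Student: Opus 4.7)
The plan is to prove Proposition 4.1 by a direct Taylor expansion of the composition map $(\alpha,\beta)\mapsto\gamma$, and then to identify the vanishing of certain second mixed partials so that only the approaching-wave products survive. Concretely, by Lemma 3.1 (applied twice, shrinking the neighborhood if necessary) the map
\[
\Phi(\beta_4,\beta_3,\beta_2,\beta_1;\Phi(\alpha_4,\alpha_3,\alpha_2,\alpha_1;U_1)) = \Phi(\gamma_4,\gamma_3,\gamma_2,\gamma_1;U_1)
\]
defines $\gamma = \gamma(\alpha,\beta;U_1)$ smoothly on a neighborhood of the origin, with the obvious boundary data $\gamma_i(\alpha,0;U_1)=\alpha_i$ and $\gamma_i(0,\beta;U_1)=\beta_i$. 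Applying the second identity of Lemma 4.1 coordinate-wise in the eight variables $(\alpha_1,\ldots,\alpha_4,\beta_1,\ldots,\beta_4)$ yields
\[
\gamma_i(\alpha,\beta) - \alpha_i - \beta_i = \sum_{j,k=1}^{4} \alpha_j\beta_k \int_0^1\!\!\int_0^1 \frac{\partial^2\gamma_i}{\partial\alpha_j\,\partial\beta_k}(r\alpha,s\beta)\,dr\,ds,
\]
so it suffices to control the sixteen mixed second partials $\partial_{\alpha_j}\partial_{\beta_k}\gamma_i|_{(0,0)}$, and to refine the analysis along the diagonal $j=k\in\{1,4\}$ where shock and rarefaction curves must be distinguished.

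Next I would eliminate the non-approaching cross terms. For $j<k$, a geometric argument shows that a $j$-wave on the left followed by a $k$-wave on the right is already the resolved Riemann solution (the characteristic speeds are ordered), so $\gamma_j=\alpha_j$, $\gamma_k=\beta_k$, and the other $\gamma_i$ vanish identically in that two-parameter slice; hence $\partial_{\alpha_j}\partial_{\beta_k}\gamma_i|_{(0,0)}=0$ for all $i$ whenever $j<k$. For the linearly degenerate same-family case $j=k\in\{2,3\}$, the wave curves in \eqref{eq:param2sheet}--\eqref{eq:param3sheet} form a one-parameter group under composition ($\Phi_2(\alpha_2;\Phi_2(\beta_2;U))=\Phi_2(\alpha_2+\beta_2;U)$ and likewise for $\Phi_3$), so $\gamma_j=\alpha_j+\beta_j$ exactly on the slice and the mixed partial vanishes. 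The mixed $2$--$3$ term (i.e.\ $\alpha_3\beta_2$) likewise produces no quadratic contribution because $\Phi_2$ and $\Phi_3$ commute on a neighborhood of the states $U_{m1},U_{m2}$, as is immediate from Lemma 3.8: their flows act on disjoint components of $U$. This is precisely why the product $|\alpha_3||\beta_2|$ is absent from $\Delta(\alpha,\beta)$.

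The remaining work is the diagonal $j=k\in\{1,4\}$, i.e.\ two waves of the same genuinely nonlinear family. Here $\partial_{\alpha_j}\partial_{\beta_j}\gamma_i|_{(0,0)}$ need not vanish; however, the classical Lax construction shows that $R_j$ and $S_j$ have contact of second order at $U_0$ (this is \eqref{eq:densityineq} and the surrounding discussion), so for two rarefactions of the same family ($\alpha_j,\beta_j\geq 0$) the curves compose without quadratic error: $\gamma_j=\alpha_j+\beta_j+O(|\alpha_j|+|\beta_j|)^3$ while $\gamma_i=0$ for $i\neq j$ to second order. This yields $\Delta_j(\alpha,\beta)=0$ in the rarefaction-rarefaction case, and $|\alpha_j||\beta_j|$ otherwise, as defined in the statement. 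Putting the three cases together and bounding the remaining second partials uniformly on the neighborhood, one absorbs all contributions into an $O(1)\Delta(\alpha,\beta)$ term, proving \eqref{eq:weakwaves}.

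The main obstacle is the bookkeeping in the same-family genuinely nonlinear case: one must verify the second-order tangency of $R_j$ and $S_j$ together with the symmetry that makes the composed two-shock or two-rarefaction stay on the same curve to leading order, and then show that a shock followed by a rarefaction (or vice versa) genuinely produces $O(1)|\alpha_j||\beta_j|$ transversal reflections in the other three families. Because the full computation is classical, I would invoke the details from \cite[Chapter 19]{smoller} for this step, and simply check that the system-specific data of \eqref{eq:euler} (genuine nonlinearity of the $1$ and $4$ fields, linear degeneracy and commutation of the $2$ and $3$ fields) match the hypotheses used there.
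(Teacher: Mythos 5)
The paper itself does not prove this proposition; it records it as standard and cites \cite[Chapter 19]{smoller}, so your ultimate deference to that reference is consistent with what the paper does. However, the argument you present as the main line of reasoning has a genuine gap at the step where you discard the non-approaching cross terms. Your exact identity
\[
\gamma_i(\alpha,\beta)-\alpha_i-\beta_i=\sum_{j,k}\alpha_j\beta_k\int_0^1\!\!\int_0^1\partial_{\alpha_j}\partial_{\beta_k}\gamma_i(r\alpha,s\beta)\,dr\,ds
\]
requires control of the integrand along the whole segment $(r\alpha,s\beta)$, whereas what you establish (exact linearity of $\gamma$ on the two-parameter slice where only $\alpha_j$ and $\beta_k$ are switched on) only gives vanishing of $\partial_{\alpha_j}\partial_{\beta_k}\gamma_i$ at the origin, or on that slice. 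Off the slice this yields merely $\partial_{\alpha_j}\partial_{\beta_k}\gamma_i(r\alpha,s\beta)=O(|\alpha|+|\beta|)$, hence a contribution $O(1)\,|\alpha_j||\beta_k|(|\alpha|+|\beta|)$ for each non-approaching pair. Such cubic terms are \emph{not} dominated by $\Delta(\alpha,\beta)$: for instance, with $\alpha=(\alpha_1,0,0,0)$, $\beta=(\beta_1,\beta_2,0,0)$ and $\alpha_1,\beta_1>0$ (two $1$--rarefactions plus a contact), one has $\Delta(\alpha,\beta)=0$, while your bound for the $(j,k)=(1,2)$ term is of size $|\alpha_1||\beta_2||\beta_1|>0$. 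So the estimate $\gamma=\alpha+\beta+O(1)\Delta$ does not follow from the expansion as you have set it up.

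The missing idea is the one Glimm's proof actually turns on: the error $E(\alpha,\beta)=\gamma(\alpha,\beta)-\alpha-\beta$ vanishes identically on the \emph{entire} set $\{\Delta(\alpha,\beta)=0\}$ — you verify this piecemeal on coordinate slices (additivity of the rarefaction and contact flows, commutativity of $\Phi_2$ and $\Phi_3$, already-ordered wave families) but never exploit it on the full set — and one must then run an iterated differencing argument, peeling off one approaching pair $(\alpha_j,\beta_k)$ at a time, so that every second difference that survives carries a factor $|\alpha_j||\beta_k|$ with $(j,k)$ approaching and all boundary terms vanish exactly by the preceding observation. That is the content of the proof in \cite[Chapter 19]{smoller}; the single eight-variable Taylor identity, with second partials evaluated only at the origin, cannot be closed into the stated estimate.
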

	
\subsection{Estimates on the interaction between the strong vortex sheet/entropy wave and weak waves}

We now derive an interaction estimate between the strong vortex sheet/entropy wave and weak waves.
The properties that $|K_{vb1}| < 1$ in \eqref{eq:vortexests}
and $|K_{va4}|<1$ in \eqref{4.5-b}
will be critical in the proof that
the Glimm functional is decreasing.

\begin{proposition}\label{prop:belowvortex}
Let $U_1, U_2 \in O_{\epsilon}(U_{m1})$ and $U_3 \in O_{\epsilon}(U_{m2})$ with
\[
\{ U_1, U_2 \} = (0,\alpha_2,\alpha_3,\alpha_4) \qquad
\text{and} \qquad \{U_2,U_3\} = (\beta_1,\sigma_2,\sigma_3,0),
\]
there is a unique $(\delta_1,\sigma'_2,\sigma'_3,\delta_4)$ such that
\[
\{ U_1, U_3 \} = (\delta_1,\sigma'_2,\sigma'_3,\delta_4),
\]
and
\begin{align*}
\delta_1 &=  \beta_1 + K_{vb1} \alpha_4 + O(1) \Delta', \\
\sigma'_2 &= \sigma_2 + \alpha_2  + K_{vb2} \alpha_4 + O(1) \Delta', \\
\sigma'_3 &= \sigma_3 + \alpha_3 + K_{vb3} \alpha_4 + O(1) \Delta' ,\\
\delta_4 &= K_{vb4} \alpha_4 + O(1) \Delta',
\end{align*}
with
\begin{equation}\label{eq:vortexests}
 \Delta' = |\beta_1|( |\alpha_2| + |\alpha_3|), \qquad  |K_{vb1}| <1, \qquad \sum_{j=2}^4 |K_{vbj}| \leq M.
\end{equation}
\end{proposition}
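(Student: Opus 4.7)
My strategy is to cast the interaction as an implicit equation for the outgoing strengths and apply the implicit function theorem, then extract the coefficients by Cramer's rule.

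First, I would write the outgoing configuration as
\[
U_3 = G_4\bigl(\delta_4;\, G_{2,3}(\sigma_3', \sigma_2';\, G_1(\delta_1;\, U_1))\bigr),
\]
where $G_1, G_4$ are the weak $1$-- and $4$--wave curves and $G_{2,3}$ is the strong vortex sheet/entropy wave, and regard this as an equation for $(\delta_1, \sigma_2', \sigma_3', \delta_4)$ in terms of the data $(U_1, \alpha, \beta_1, \sigma_2, \sigma_3)$ (which together determine $U_3$). At the reference configuration $U_1 = U_{m1}$, $\alpha = 0$, $\beta_1 = 0$, and $(\sigma_2, \sigma_3) = (\sigma_{20}, \sigma_{30})$, both sides equal $U_{m2}$ and the trivial solution is $(\delta_1, \sigma_2', \sigma_3', \delta_4) = (0, \sigma_{20}, \sigma_{30}, 0)$. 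The Jacobian of the right-hand side with respect to the unknowns at the reference point has columns $\nabla_U G_{2,3}\, \mathbf{r}_1(U_{m1})$, $\partial_{\sigma_2} G_{2,3}$, $\partial_{\sigma_3} G_{2,3}$, $\mathbf{r}_4(U_{m2})$, and its determinant is strictly positive by Lemma \ref{lem:vortexdet}. The implicit function theorem then produces unique $C^1$ solutions in a neighborhood, giving existence and uniqueness of the resolved strengths.

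To extract the leading-order formulas, I would differentiate at the reference configuration and use the identity $\nabla_U G_{2,3}\, \mathbf{r}_j = \partial_{\sigma_j} G_{2,3}$ for $j = 2, 3$, which is immediate from the explicit forms in Lemma \ref{lem:vortexcalc} together with $v_{m1} = 0$. Matching first-order contributions then reduces the problem to the $4\times 4$ linear system
\[
(\delta_1 - \beta_1)\,\nabla_U G_{2,3}\,\mathbf{r}_1 + (\sigma_2' - \sigma_2 - \alpha_2)\,\partial_{\sigma_2} G_{2,3} + (\sigma_3' - \sigma_3 - \alpha_3)\,\partial_{\sigma_3} G_{2,3} + \delta_4\,\mathbf{r}_4(U_{m2}) = \alpha_4\,\nabla_U G_{2,3}\,\mathbf{r}_4(U_{m1}),
\]
from which the $K_{vbj}$ emerge as Cramer's rule ratios with the positive determinant of Lemma \ref{lem:vortexdet} in the denominator; the bound $\sum_{j=2}^{4} |K_{vbj}| \leq M$ is then automatic. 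The remainder $O(1)\Delta'$ collects the quadratic Taylor error via Lemma \ref{lem:1varest}, where bilinear terms involving $\alpha_4$ (which merely modify the state-dependent coefficient $K_{vb1}$) are absorbed into the leading term, leaving the irreducible cross contributions $|\beta_1||\alpha_2|$ and $|\beta_1||\alpha_3|$ that arise from the noncommutativity of the weak $1$--wave with the contact discontinuities across the strong sheet, exactly as in the weak-wave estimate of Proposition \ref{prop:weakwaves}.

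The main technical obstacle is the sharp inequality $|K_{vb1}| < 1$. To establish it, I would expand the two $4 \times 4$ determinants in the Cramer ratio for $K_{vb1}$ along the columns $\partial_{\sigma_2} G_{2,3}$ and $\partial_{\sigma_3} G_{2,3}$, each of which has a single nonzero entry (components $1$ and $4$ respectively, by Lemma \ref{lem:vortexcalc}). This collapses both determinants to $2 \times 2$ minors in rows $2$ and $3$. Using $v_{m1} = 0$, which yields $\lambda_1(U_{m1}) = -\lambda_4(U_{m1})$, together with the identity $\kappa_1(U_{m1}) = \kappa_4(U_{m1})$, the ratio telescopes to
\[
K_{vb1} = \frac{A - B}{A + B}, \qquad A := e^{2\sigma_{20} + \sigma_{30}}\,\lambda_4(U_{m2}) > 0, \qquad B := \lambda_4(U_{m1}) > 0,
\]
so that $|K_{vb1}| < 1$ follows immediately. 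This algebraic cancellation, rooted in the symmetry $\lambda_1 = -\lambda_4$ at states with vanishing transverse velocity, is the crucial mechanism that drives the later monotonicity of the Glimm functional.
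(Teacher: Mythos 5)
Your proposal is correct and follows essentially the same route as the paper: the same implicit equation $\Phi_4(\delta_4;G_{2,3}(\sigma_2',\sigma_3';\Phi_1(\delta_1;U_1)))=U_3$ resolved by the implicit function theorem via the positive determinant of Lemma \ref{lem:vortexdet}, the coefficients $K_{vbj}$ extracted by differentiating in $\alpha_4$ and applying Cramer's rule, and the key bound $|K_{vb1}|<1$ obtained from exactly the same ratio $\bigl|\lambda_4(U_{m2})e^{2\sigma_{20}+\sigma_{30}}-\lambda_4(U_{m1})\bigr|/\bigl(\lambda_4(U_{m2})e^{2\sigma_{20}+\sigma_{30}}+\lambda_4(U_{m1})\bigr)$ using $\lambda_1(U_{m1})=-\lambda_4(U_{m1})$ and $\kappa_1(U_{m1})=\kappa_4(U_{m1})$. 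The only differences are cosmetic (explicitly noting $\nabla_U G_{2,3}\,\mathbf{r}_j=\partial_{\sigma_j}G_{2,3}$ for $j=2,3$ and the column-expansion of the determinants), so no further comparison is needed.
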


\begin{proof}
This proof is the same as \cite[pp. 1673]{compvortex}, with additional terms.
We need to solve for $(\delta_1,\sigma_2',\sigma_3',\delta_4)$ as a function
of $(\alpha_1,\alpha_2,\alpha_3,\alpha_4, \beta_1,\sigma_2,\sigma_3,\beta_4, U_1)$  in the following equation:
\begin{equation}\label{eq:belowvortexeqn}
\Phi_4(\delta_4; G_{2,3}(\sigma_2',\sigma_3';\Phi_1(\delta_1;U_1)))
= \Phi_4(\beta_4;G(\sigma_3,\sigma_2; \Phi_1(\beta_1; \Phi(\alpha_4,\alpha_3,\alpha_2,\alpha_1;U_1)))).
\end{equation}

Lemma \ref{lem:vortexdet} implies
\begin{align*}
&\left.	\mathrm{det} \brac*{\frac{\partial \Phi_4(\delta_4; G_{2,3}(\sigma_2',\sigma_3';\Phi_1(\delta_1;U_1))) }{\partial(\delta_4,\sigma_3',\sigma_2',\delta_1)} }
  \right|_{\delta_4 = \delta_1 = 0, \sigma_2' = \sigma_{20}, \sigma_3' = \sigma_{30}, U_1 = U_{m1} } \hfill \\
&= \mathrm{det}\brac* { \mathbf r_4(U_{m2}), \partial_{\sigma_3} G_{2,3}(\sigma_{30},\sigma_{20}; U_{m1}),
 \partial_{\sigma_2} G_{2,3}(\sigma_{30},\sigma_{20}; U_{m1}), \nabla_U G(\sigma_{30}, \sigma_{20}, U_{m1}) \cdot \mathbf{r}_1(U_1)  }\\
& > 0. \hfill
\end{align*}

Note that, when $\alpha_4 = 0$, the unique solution is
\[
\delta_1 = \beta_1, \quad \sigma'_2  = \sigma_2 + \alpha_2, \quad \sigma'_3 = \sigma_3 + \alpha_3,
\quad \delta_4 = \beta_4,
\]
so that
\begin{equation}\label{eq:waveests}
\begin{aligned}
\delta_1 &= \beta_1 + K_{vb1} \alpha_4 + \alpha_1 +  O(1) \Delta', \\
\delta_4 & = K_{vb4} \alpha_4  + \beta_4 + O(1) \Delta', \\			
\sigma'_i &= \sigma_i + \alpha_i + K_{vbi} \alpha_4 + O(1) \Delta' \qquad\mbox{for $i=2,3$},
\end{aligned}
\end{equation}
where
\begin{align*}
K_{vbi} = \int_0^1 \partial_{\alpha_4} \sigma'_i(\alpha_2,\alpha_3, \theta \alpha_4, \beta_1, \sigma_2,\sigma_3 ) \, d \theta \qquad\mbox{for $i=2,3$},			\\
K_{vbj} = \int_0^1 \partial_{\alpha_4} \delta_j(\alpha_2,\alpha_3, \theta \alpha_4, \beta_1, \sigma_2,\sigma_3 ) \, d \theta \qquad\mbox{for $j=1,4$}.					
\end{align*}
We see that $K_{vbi}$ and $K_{bvj}$ are bounded from the formulae above.
To deduce that $|K_{vb1}| < 1$, we differentiate \eqref{eq:belowvortexeqn} with respect to $\alpha_4$
to obtain
\begin{align*}
&\nabla_U G_{2,3}(\sigma_{30},\sigma_{20};U_{m1}) \cdot \mathbf{r}_4(U_{m1}) \\
&= \partial_{\alpha_4}\delta_4\, \mathbf r_4(U_{m2}) + \partial_{\alpha_4}\sigma'_3\, \partial_{\sigma_3} G_{2,3} (\sigma_{30},\sigma_{20}; U_{m1})
  + \partial_{\alpha_4}\sigma'_2\, \partial_{\sigma 2} G_{2,3} (\sigma_{30},\sigma_{20}; U_{m1}) \\
& \quad   + \partial_{\alpha_4} \delta_1\, \nabla_U G_{2,3}(\sigma_{30},\sigma_{20};U_{m1}) \cdot \mathbf{r}_1(U_{m1}).
\end{align*}
By Lemma \ref{lem:vortexcalc} and another similar calculation, we have
\begin{align*}
&|\partial_{\alpha_4} \delta_1|\\[1mm]
&= \abs*{ \frac{\mathrm{det}\brac* { \mathbf r_4(U_{m2}), \partial_{\sigma_3} G_{2,3}(\sigma_{30},\sigma_{20}; U_{m1}),
   \partial_{\sigma_2} G_{2,3}(\sigma_{30},\sigma_{20}; U_{m1}), \nabla_U G(\sigma_{30}, \sigma_{20}, U_{m1}) \cdot \mathbf{r}_4(U_1)}}
    { \mathrm{det}\brac* { \mathbf r_4(U_{m2}), \partial_{\sigma_3} G_{2,3}(\sigma_{30},\sigma_{20}; U_{m1}),
     \partial_{\sigma_2} G_{2,3}(\sigma_{30},\sigma_{20}; U_{m1}), \nabla_U G(\sigma_{30}, \sigma_{20}, U_{m1}) \cdot \mathbf{r}_1(U_1)  } } } \\[2mm]
&= \abs*{ \frac{\kappa_1(U_{m1}) \kappa_4(U_{m2}) \rho_{m1}^2 u_{m1}^2 e^{\sigma_{20}
 + \sigma_{30}} \brac*{\lambda_4(U_{m2}) e^{2 \sigma_{20} + \sigma_{30}}
  - \lambda_4(U_{m1}) } }{\kappa_1(U_{m1}) \kappa_4(U_{m2}) \rho_{m1}^2 u_{m1}^2 e^{\sigma_{20}
  + \sigma_{30}} \brac*{\lambda_4(U_{m2}) e^{2 \sigma_{20} + \sigma_{30}} + \lambda_4(U_{m1}) } }} \\[2mm]
& =  \abs*{ \frac{\lambda_4(U_{m2}) e^{2 \sigma_{20} + \sigma_{30}} - \lambda_4(U_{m1})  }{\lambda_4(U_{m2}) e^{2 \sigma_{20} + \sigma_{30}} + \lambda_4(U_{m1}) }} \\[2mm]
&< 1.
\end{align*}
\end{proof}

We can then deduce the following result by symmetry:
\begin{proposition} \label{prop:abovevortex}
If $U_1 \in O_{\epsilon}(U_{m2})$ and $U_2, U_3 \in O_\epsilon(U_{m1})$ with
\[
\{ U_1,U_2 \} = (0,\sigma_2,\sigma_3,\alpha_4), \qquad \{U_2,U_3 \} = (\beta_1,\beta_2,\beta_3,0),
\]
then	
\[
\{ U_1, U_3 \} = (\delta_1, \sigma'_2,\sigma'_3,\delta_4)
\]
with
\begin{align*}
			\delta_1 &=  K_{va1} \beta_1 + O(1) \Delta'',\\
			\sigma'_2 &= \sigma_2 + \beta_2 + K_{va2} \beta_1 + O(1) \Delta'', \\
			\sigma'_3 &= \sigma_3 + \beta_3 + K_{va3} \beta_1 + O(1) \Delta'', \\
			\delta_4 &=\alpha_4+  K_{va4} \beta_1 + O(1) \Delta '',
\end{align*}
and
\begin{equation}\label{4.5-b}
\Delta'' = |\alpha_4|(|\beta_2| + |\beta_3| ), \qquad \sum_{j=1}^3 |K_{vaj}| \leq M, \qquad |K_{va4}| < 1.
\end{equation}
\end{proposition}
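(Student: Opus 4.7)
The plan is to follow the proof of Proposition~\ref{prop:belowvortex} with the roles of the $1$--family and $4$--family interchanged, and with the roles of ``above'' and ``below'' the strong vortex sheet and entropy wave reversed. This is the manifestation of the invariance of system~\eqref{eq:euler} under $(x,y)\mapsto(-x,-y)$, $(u,v)\mapsto(-u,-v)$ mentioned in the introduction, which swaps $U_{m1}\leftrightarrow U_{m2}$, $\mathbf r_1\leftrightarrow \mathbf r_4$, and $\lambda_1\leftrightarrow -\lambda_4$, while preserving the linearly degenerate $2$-- and $3$--families.

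First I set up the analogue of~\eqref{eq:belowvortexeqn}: expressing $U_3$ from $U_1$ in two different ways gives
\begin{equation*}
\Phi_4(\delta_4;G_{2,3}(\sigma_3',\sigma_2';\Phi_1(\delta_1;U_1)))=\Phi_3(\beta_3;\Phi_2(\beta_2;\Phi_1(\beta_1;\Phi_4(\alpha_4;G_{2,3}(\sigma_3,\sigma_2;U_1))))).
\end{equation*}
At the reference values $\beta_1=\beta_2=\beta_3=\alpha_4=0$, $U_1=U_{m2}$, with $(\sigma_2,\sigma_3)$ at the background values for which $G_{2,3}(\sigma_3,\sigma_2;U_{m2})=U_{m1}$, this equation admits the obvious solution $\delta_1=\delta_4=0$ with $(\sigma_2',\sigma_3')$ at the same background values. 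To apply the implicit function theorem, the Jacobian of the left-hand side with respect to $(\delta_4,\sigma_3',\sigma_2',\delta_1)$ must be nonzero. By Lemma~\ref{lem:vortexcalc}, this Jacobian equals
\begin{equation*}
\det\bigl(\mathbf r_4(U_{m1}),\,\partial_{\sigma_3}G_{2,3},\,\partial_{\sigma_2}G_{2,3},\,\nabla_U G_{2,3}\cdot\mathbf r_1(U_{m2})\bigr),
\end{equation*}
the exact counterpart of the determinant in Lemma~\ref{lem:vortexdet}; a direct computation yields a positive multiple of $\lambda_4(U_{m1})-e^{2\sigma_{20}+\sigma_{30}}\lambda_1(U_{m2})$, which is strictly positive since $\lambda_4(U_{m1})>0$ and $\lambda_1(U_{m2})<0$. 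Thus a unique $C^2$ solution $(\delta_1,\sigma_2',\sigma_3',\delta_4)$ exists on a neighborhood of the reference point.

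With existence in hand I use the identities of Lemma~\ref{lem:1varest}. Setting $\beta_1=0$ in the implicit equation, the unique solution is explicit: $\delta_1=0$, $\sigma_2'=\sigma_2+\beta_2$, $\sigma_3'=\sigma_3+\beta_3$, $\delta_4=\alpha_4$. Writing the full solution as its value at $\beta_1=0$ plus a term linear in $\beta_1$, with coefficients expressed as integral averages $K_{vaj}=\int_0^1\partial_{\beta_1}\delta_j\,d\theta$, and absorbing the remaining cross-terms via Lemma~\ref{lem:1varest}, yields the stated decomposition with error $O(1)\Delta''$, where $\Delta''=|\alpha_4|(|\beta_2|+|\beta_3|)$ captures the only surviving quadratic wave interaction---between the $4$--weak wave on one side of the strong front and the linearly degenerate $2$--, $3$--weak waves on the other side. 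The uniform bounds $\sum_{j=1}^{3}|K_{vaj}|\le M$ are then immediate from the boundedness of the integrands.

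The main step, and the main obstacle, is the bound $|K_{va4}|<1$. Differentiating the implicit equation with respect to $\beta_1$ at the reference point produces the linear system
\begin{equation*}
\partial_{\beta_1}\delta_4\,\mathbf r_4(U_{m1})+\partial_{\beta_1}\sigma_3'\,\partial_{\sigma_3}G_{2,3}+\partial_{\beta_1}\sigma_2'\,\partial_{\sigma_2}G_{2,3}+\partial_{\beta_1}\delta_1\,\nabla_U G_{2,3}\cdot\mathbf r_1(U_{m2})=\mathbf r_1(U_{m1}).
\end{equation*}
Cramer's rule and the $4\times 4$ determinant evaluation (entirely parallel to Lemma~\ref{lem:vortexdet}), together with the identity $\kappa_1(U_{m1})=\kappa_4(U_{m1})$ at the state $U_{m1}=(u_{m1},0,p_{m1},\rho_{m1})$, give
\begin{equation*}
|K_{va4}|=|\partial_{\beta_1}\delta_4|=\left|\frac{\lambda_1(U_{m1})-e^{2\sigma_{20}+\sigma_{30}}\lambda_1(U_{m2})}{\lambda_1(U_{m1})+e^{2\sigma_{20}+\sigma_{30}}\lambda_1(U_{m2})}\right|.
\end{equation*}
Since $\lambda_1(U_{m1})$ and $\lambda_1(U_{m2})$ are both strictly negative at states with $v=0$ and $u>c$, the denominator is a sum of two quantities of the same sign while the numerator is their signed difference; hence the absolute value of the numerator is strictly less than that of the denominator, so $|K_{va4}|<1$. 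The delicate part of the argument is the bookkeeping of signs and of the exponential factors $e^{\sigma_{20}},e^{\sigma_{30}}$ through the determinant evaluation; once this is controlled, the proof proceeds in exact parallel to that of Proposition~\ref{prop:belowvortex}.
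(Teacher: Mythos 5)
Your proposal is correct and takes essentially the same route as the paper, which disposes of this proposition in one line ``by symmetry'' from Proposition~\ref{prop:belowvortex}; you simply execute that symmetry explicitly---the mirrored implicit equation, the nonvanishing of the companion determinant to Lemma~\ref{lem:vortexdet} with the $1$-- and $4$--families and the states $U_{m1},U_{m2}$ exchanged, the same fundamental-theorem-of-calculus decomposition with the cross-terms absorbed into $O(1)\Delta''$, and the same Cramer's-rule evaluation exhibiting $K_{va4}$ as a signed difference over a sum of two like-signed eigenvalue quantities, whence $|K_{va4}|<1$. The only caveat is notational: you follow the statement's literal placement of $U_1$ near $U_{m2}$ and $U_2,U_3$ near $U_{m1}$ (which, for consistency with Case~3 of the Glimm-functional argument, appears to have the neighborhoods transposed), but your argument is unaffected since the two readings differ only by relabeling.
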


\subsection{Estimates on the interaction between the strong shocks and weak waves}

We now derive an interaction estimate between the strong shocks and weak waves.
The properties that $|K_{1sa4}| < 1$ in \eqref{4.6a}
and $|K_{4sb1}|<1$ in \eqref{4.13a}
will be critical in the proof that
the Glimm functional is decreasing.

\subsubsection{Interaction between the strong $1$--shock and weak waves}
	
\begin{proposition}\label{prop:strong1aboveest}
If $U_1 \in O_\epsilon(U_b)$ and $U_2,U_3 \in O_\epsilon(U_{m2})$ with
\[
\{  U_1, U_2 \} = (\sigma_1,\alpha_2,\alpha_3,\alpha_4), \qquad \{U_2,U_3 \} = (\beta_1,\beta_2,\beta_3,0),
\]
then there exists a unique $(\sigma'_1,\delta_2,\delta_3,\delta_4)$ such that
\[
\{ U_1, U_3 \} = (\sigma'_1,\delta_2,\delta_3,\delta_4).
\]
Moreover,
\begin{align*}
&\sigma'_1 = \sigma_1 + K_{1sa1} \beta_1 + O(1) \Delta'' , \qquad\qquad\, \delta_2 = \alpha_2 + \beta_2 + K_{1sa2}\beta_1 + O(1) \Delta'',\\
&\delta_3 = \alpha_3 + \beta_3 + K_{1sa3} \beta_1+ O(1) \Delta '' , \qquad \delta_4 = \alpha_4  + \beta_4 + K_{1sa4} \beta_1 +  O(1) \Delta'' ,
\end{align*}
with
\begin{equation}\label{4.6a}
\Delta'' = |\alpha_4|(|\beta_1| + |\beta_2| + |\beta_3|), \qquad |K_{1sa4}|< 1, \qquad \sum_{j=1}^3 |K_{1saj}| \leq M.
\end{equation}
\end{proposition}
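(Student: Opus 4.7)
The plan is to mimic the strategy used in Proposition \ref{prop:belowvortex}, replacing the strong vortex sheet/entropy wave by the strong $1$-shock and invoking Lemma \ref{lem:paramstrong1} (for the shock-polar parameterization $G_1$) together with Lemma \ref{lem:matrixa} (for the key determinant positivity). First, I would write out the Riemann-compatibility identity demanding that the outgoing and incoming configurations reach the same state $U_3$:
\begin{equation*}
\Phi_4(\delta_4; \Phi_3(\delta_3; \Phi_2(\delta_2; G_1(\sigma'_1; U_1))))
= \Phi_3(\beta_3; \Phi_2(\beta_2; \Phi_1(\beta_1; \Phi_4(\alpha_4; \Phi_3(\alpha_3; \Phi_2(\alpha_2; G_1(\sigma_1; U_1))))))),
\end{equation*}
to be solved for $(\sigma'_1, \delta_2, \delta_3, \delta_4)$. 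At the base point where $U_1 = U_b$, $\sigma_1 = \sigma_{10}$, and all weak strengths vanish, both sides equal $U_{m1}$, and the Jacobian of the left-hand side in $(\delta_4, \delta_3, \delta_2, \sigma'_1)$ has columns $(\mathbf{r}_4(U_{m1}), \mathbf{r}_3(U_{m1}), \mathbf{r}_2(U_{m1}), \partial_{\sigma_1} G_1(\sigma_{10}, U_b))$. Multiplying on the left by $A$ from Lemma \ref{lem:matrixa} converts its determinant into the strictly positive one in that lemma, so the implicit function theorem yields a unique smooth resolution $(\sigma'_1, \delta_2, \delta_3, \delta_4)$ in the required neighborhoods, establishing existence and uniqueness.

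Next, I would extract the expansions by identifying a convenient trivial slice. On $\alpha_4 = \beta_1 = 0$, the weak $2$- and $3$-waves compose additively and commute across all levels, so the identity collapses to $\sigma'_1 = \sigma_1$, $\delta_2 = \alpha_2 + \beta_2$, $\delta_3 = \alpha_3 + \beta_3$, $\delta_4 = 0$. Integrating the derivatives of the implicit solution against $\beta_1$ (with $\alpha_4 = 0$) via the first identity in Lemma \ref{lem:1varest} produces the linear contributions $K_{1saj}\beta_1$, and the bilinear identity in the same lemma absorbs the cross terms from turning $\alpha_4$ on back into the remainder $\Delta'' = |\alpha_4|(|\beta_1|+|\beta_2|+|\beta_3|)$. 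The coefficients $K_{1saj}$ are then explicit averaged $\beta_1$-derivatives, and their uniform boundedness by $M$ on a universal neighborhood follows directly from the smooth dependence supplied by the implicit function theorem.

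The main obstacle is the strict reflection bound $|K_{1sa4}| < 1$. To obtain it, I would differentiate the implicit identity in $\beta_1$ at the base point, producing the linear relation
\begin{equation*}
\mathbf{r}_1(U_{m1}) = \partial_{\beta_1}\sigma'_1\, \partial_{\sigma_1} G_1(\sigma_{10}, U_b) + \partial_{\beta_1}\delta_2\, \mathbf{r}_2(U_{m1}) + \partial_{\beta_1}\delta_3\, \mathbf{r}_3(U_{m1}) + \partial_{\beta_1}\delta_4\, \mathbf{r}_4(U_{m1}),
\end{equation*}
so Cramer's rule expresses $K_{1sa4}|_{\text{base}}$ as a ratio of $4\times 4$ determinants. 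The hard step is computing this ratio: I would left-multiply both determinants by $A$ to match the expressions in the proof of Lemma \ref{lem:matrixa}, observe that they share a common positive factor, and reduce the ratio to one whose sign and magnitude are controlled by $P>0$, $Q<0$, $\sigma_{10}<0$, together with the entropy bound $\sigma_{10} < \lambda_1(U_{m1}) = -\lambda_4(U_{m1}) < 0$ from \eqref{eq:shockspeeds} and the bounds in Lemma \ref{lem:shock1speed}. The resulting comparison forces the numerator's absolute value to be strictly less than the denominator, giving $|K_{1sa4}| < 1$ at the base point; continuity then propagates the strict inequality throughout the neighborhood.
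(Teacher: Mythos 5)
Your proposal takes essentially the same route as the paper: the same compatibility equation solved by the implicit function theorem using the positive determinant of Lemma \ref{lem:matrixa}, the expansion via Lemma \ref{lem:1varest} about the slice where the solution is explicit, and the identification of $K_{1sa4}$ at the base point with the Cramer ratio $\det(A\mathbf r_1, A\mathbf r_2, A\mathbf r_3, A\partial_{\sigma_1}G_1)/\det(A\mathbf r_4, A\mathbf r_2, A\mathbf r_3, A\partial_{\sigma_1}G_1)$, followed by continuity. The only organizational difference is that the paper first collapses the incoming weak waves into a single vector $\gamma$ via Proposition \ref{prop:weakwaves} and expands in $\gamma_1$ alone, absorbing the residual $|\beta_1|(|\alpha_2|+|\alpha_3|+|\alpha_4|)$ into the $K_{1saj}\beta_1$ terms, whereas you expand directly in $(\alpha_4,\beta_1)$ off the slice $\alpha_4=\beta_1=0$; both are fine. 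One small correction for the final step, which you (like the paper) leave to an explicit computation: the entropy condition \eqref{eq:shockspeeds} gives $\lambda_1(U_{m1})<\sigma_{10}<\lambda_1(U_b)$, not $\sigma_{10}<\lambda_1(U_{m1})$, and the paper obtains $|K_{1sa4}|<1$ from the product of the two explicit factors rather than from a factor-by-factor sign comparison, so that computation still needs to be carried out as in \cite{shockwedges}.
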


\begin{proof}
This is similar to \cite[pp. 303]{shockwedges}, with extra terms. We need to show that there exists a solution $(\sigma_1',\delta_2,\delta_3,\delta_4)$ to
\begin{equation} \label{eq:1shockaboveeqn}
\Phi(0,\beta_3,\beta_2,\beta_1; \Phi(\alpha_4,\alpha_3,\alpha_2,0; G_1(\sigma_1;U_1))) = \Phi(\delta_4,\delta_3,\delta_2, 0 ; G_1(\sigma'_1,U_1)).
\end{equation}
By Proposition \ref{prop:weakwaves}, there exists $(\gamma_4,\gamma_3,\gamma_2,\gamma_1)$ as a function of $(\alpha_2,\alpha_3,\alpha_4,\beta_1,\beta_2,\beta_3,\beta_4)$ such that
\begin{equation} \label{eq:1shockgammaeqn}
\Phi(0,\beta_3,\beta_2,\beta_1; \Phi(\alpha_4,\alpha_3,\alpha_2,0; G_1(\sigma_1;U_1))) = \Phi(\gamma_4,\gamma_3,\gamma_2,\gamma_1;G_1(\sigma_1;U_1))
\end{equation}
with
\begin{equation}\label{eq:gammas}
\begin{aligned}
&\gamma_1 = \beta_1 + O(1) \tilde \Delta, \qquad\qquad\,\, \gamma_2 = \alpha_2 + \beta_2 + O(1) \tilde \Delta, \\
&\gamma_3 = \alpha_3 + \beta_3 + O(1) \tilde \Delta,  \qquad \gamma_4 = \alpha_4 +  O(1) \tilde \Delta,
\end{aligned}
\end{equation}
where
\[
\tilde \Delta = \Delta'' + |\beta_1|(  |\alpha_2| + |\alpha_3| + |\alpha_4|).
\]
Thus, we may reduce \eqref{eq:1shockaboveeqn} to
\begin{equation}\label{eq:1shockaboveeqn2}
\Phi(\gamma_4,\gamma_3,\gamma_2,\gamma_1;G_1(\sigma_1,U_1))
= \Phi(\delta_4,\delta_3,\delta_2, 0 ; G_1(\sigma'_1,U_1)).
\end{equation}
By Lemma \ref{lem:matrixa},
\begin{align*}
&\mathrm{det} \left.\brac*{ \frac{\partial \Phi(\delta_4,\delta_3,\delta_2, 0 ; G_1(\sigma'_1,U_1)) }{\partial(\sigma_1',\delta_2,\delta_3,\delta_4) }}
  \right|_{ \{U_1 = U_b, \delta_2 = \delta_3 =\delta_4 = 0, \sigma'_1 = \sigma_{10} \}} \\
&= \frac{1}{\mathrm{det} A} \mathrm{det}(A \mathbf{r_4}, A \mathbf{r_3}, A \mathbf{r_2}, A  \partial_{\sigma_1}G_1(\sigma_{10};U_b) )|_{U = U_{m1}}  > 0.
\end{align*}
Next, observe that, when $\gamma_1 = 0$, the unique solution is $(\sigma', \delta_2,\delta_3,\delta_4)=(\sigma_1, \gamma_2,\gamma_3,\gamma_4)$.
Thus, for $i = 2,3,4$,
\begin{equation}\label{eq:deltasigma}
\sigma'_1 = K_{1sa1} \gamma_1 + \sigma_1, \qquad
\delta_i = K_{1sai} \gamma_1 + \gamma_i,
\end{equation}
where
\begin{equation}
\begin{aligned}
K_{1sa1} &= \int_0^1 \partial_{\gamma_1}\sigma'_1(t \gamma_1,\gamma_2,\gamma_3,\gamma_4,\sigma_1,U_1) \, dt, \\
K_{1sai} &= \int_0^1 \partial_{\gamma_1}\delta_i(t \gamma_1,\gamma_2,\gamma_3,\gamma_4,\sigma_1,U_1) \, dt \qquad\mbox{for $i=2,3,4$}.
\end{aligned}
\end{equation}
To see that $|K_{1sa4}|<1$, we find that, by Lemma \ref{lem:matrixa} and another similar computation,
\begin{align*}
|\partial_{\gamma_1} \delta_4|
&=  \abs*{ \frac{\mathrm{det}(A \mathbf r_1, \mathbf A r_2,  A \mathbf r_3, A \partial_{\sigma_1} G_1(\sigma_{10},U_{b}) )}
  {\mathrm{det}(A \mathbf r_4, \mathbf A r_2,  A \mathbf r_3, A \partial_{\sigma_1} G_1(\sigma_{10},U_{b}) )} } \\
&= \abs*{ \frac{ \sigma_{10} + \lambda_1(U_{m1}) }{\sigma_{10} - \lambda_1(U_{m1}) } }
  \abs*{  \frac{ u_{m1} \lambda_4(U_{m1}) P + \frac{c_{m2}^2}{\gamma-1} \sigma_{10} u_b}{ u_{m1} \lambda_4(U_{m1}) P - \frac{c_{m2}^2}{\gamma-1} \sigma_{10} u_b}} < 1.
\end{align*}
Now, using the expression of $\gamma$ in terms of $(\alpha,\beta)$
and absorbing the residual part of the $\tilde \Delta$--term into the $K_{1sai}\beta_1$--term,
we have the desired result.
\end{proof}
	
We now prove a result for the case where weak waves approach the strong $1$-shock from below.

\begin{proposition}\label{prop:strong1belowest}
Suppose that $U_1,U_2 \in O_\epsilon(U_b)$ and $U_3 \in O_\epsilon(U_{m2})$ such that
\[
\{ U_1,U_2 \} = (\alpha_1,\alpha_2,\alpha_3,\alpha_4), \qquad \{ U_2,U_3 \} = (\sigma_1, \beta_2, \beta_3,\beta_4 ).
\]
Then there exists a unique $(\sigma'_1,\delta_2,\delta_3,\delta_4)$ such that
\[
\{ U_1, U_3 \} = (\sigma'_1,\delta_2,\delta_3,\delta_4).
\]
Moreover,
\begin{align*}
\sigma'_1 = \sigma_1 + \sum_{j=1}^4 K_{1sb1j} \alpha_j, \qquad\,\, \delta_2 = \beta_2 +  \sum_{j=1}^4 K_{1sb2j} \alpha_j,   \\
\delta_3 = \beta_3 +  \sum_{j=1}^4 K_{1sb3j} \alpha_j,  \qquad
\delta_4 = \beta_4 +  \sum_{j=1}^4 K_{1sb4j} \alpha_j,
\end{align*}
where  \[\sum_{i,j} |K_{1sb ij}| \leq M.\]
\end{proposition}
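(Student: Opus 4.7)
The plan is to set up the relevant interaction equation, solve it via the implicit function theorem exactly as in Proposition 4.3, and then use a one-line Taylor expansion to read off the coefficients $K_{1sbij}$. Concretely, I would seek $(\sigma_1', \delta_2, \delta_3, \delta_4)$ satisfying
\[
\Phi\bigl(\delta_4, \delta_3, \delta_2, 0;\, G_1(\sigma_1'; U_1)\bigr)
 = \Phi\bigl(\beta_4, \beta_3, \beta_2, 0;\, G_1\bigl(\sigma_1;\, \Phi(\alpha_4, \alpha_3, \alpha_2, \alpha_1; U_1)\bigr)\bigr).
\]
At the background point ($U_1 = U_b$, all weak strengths vanishing, $\sigma_1 = \sigma_1' = \sigma_{10}$, $\delta_2 = \delta_3 = \delta_4 = 0$), the Jacobian with respect to the unknowns $(\sigma_1', \delta_2, \delta_3, \delta_4)$ is the same determinant that was computed in the proof of Proposition 4.3, namely $\mathrm{det}(A\mathbf{r}_4, A\mathbf{r}_3, A\mathbf{r}_2, A\,\partial_{\sigma_1} G_1(\sigma_{10}; U_b))|_{U = U_{m1}}/\mathrm{det}\,A$, which is positive by Lemma \ref{lem:matrixa}. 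Hence the implicit function theorem produces a unique smooth solution $(\sigma_1', \delta_2, \delta_3, \delta_4)$ as a $C^1$ function of $(\alpha_1, \alpha_2, \alpha_3, \alpha_4, \beta_2, \beta_3, \beta_4, \sigma_1, U_1)$ in the relevant neighborhood.

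The next step is to observe that, when $\alpha_1 = \alpha_2 = \alpha_3 = \alpha_4 = 0$, the unique solution is simply $\sigma_1' = \sigma_1$ and $\delta_i = \beta_i$ for $i = 2,3,4$. This allows me to apply the multivariable version of the fundamental theorem of calculus along the straight segment from $0$ to $\alpha = (\alpha_1, \alpha_2, \alpha_3, \alpha_4)$ in parameter space (a straightforward extension of Lemma \ref{lem:1varest}) to obtain, for each of the unknowns $\Psi \in \{\sigma_1', \delta_2, \delta_3, \delta_4\}$,
\[
\Psi(\alpha; \beta, \sigma_1, U_1) - \Psi(0; \beta, \sigma_1, U_1)
 = \sum_{j=1}^{4} \alpha_j \int_0^1 \partial_{\alpha_j} \Psi(t\alpha; \beta, \sigma_1, U_1)\, dt.
\]
Defining the $K_{1sbij}$ as these line integrals then yields the required decomposition.

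Finally, the coefficients $K_{1sbij}$ are continuous in all their arguments (since $\Psi$ is $C^1$ by the implicit function theorem), and hence are uniformly bounded in the chosen neighborhoods of $U_b$ and $\sigma_{10}$; this gives the uniform bound $\sum_{i,j} |K_{1sbij}| \leq M$.

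I expect no serious obstacle: unlike Proposition 4.3, there is no reflection-coefficient bound of the form $|K| < 1$ to establish (which was the delicate step there via the explicit determinant calculation using Lemma \ref{lem:matrixa}). The reason is that the weak waves here arrive from the $U_b$-side of the strong $1$-shock, and after crossing they produce waves of families $2,3,4$ above it; in the Glimm-functional accounting these reflections simply need to be controlled by bounded constants, not by strict contraction. Thus the only mild subtlety is to ensure that the IFT neighborhood obtained is uniform in $(\sigma_1, U_1)$ across $O_{\hat\epsilon}(\sigma_{10}) \times O_\epsilon(U_b)$, which follows by the same compactness argument implicit in the proofs of the previous propositions.
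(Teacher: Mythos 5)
Your proposal takes essentially the same route as the paper's own proof: the identical interaction equation $\Phi(\delta_4,\delta_3,\delta_2,0;G_1(\sigma_1';U_1)) = \Phi(\beta_4,\beta_3,\beta_2,0;G_1(\sigma_1;\Phi(\alpha_4,\alpha_3,\alpha_2,\alpha_1;U_1)))$, the implicit function theorem justified by the positivity of the Jacobian determinant via Lemma \ref{lem:matrixa}, the observation that $(\sigma_1',\delta_2,\delta_3,\delta_4)=(\sigma_1,\beta_2,\beta_3,\beta_4)$ is the solution when $\alpha_1=\alpha_2=\alpha_3=\alpha_4=0$, and the fundamental-theorem-of-calculus expansion of Lemma \ref{lem:1varest} to define and uniformly bound the coefficients $K_{1sbij}$. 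Your identification of the Jacobian as $\mathrm{det}(A\mathbf{r}_4,A\mathbf{r}_3,A\mathbf{r}_2,A\,\partial_{\sigma_1}G_1)/\mathrm{det}A$ matches the computation in Proposition \ref{prop:strong1aboveest}, and your closing remark that no reflection bound $|K|<1$ is needed for waves hitting the strong $1$--shock from below is consistent with how this proposition is used in the Glimm functional estimate.
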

	
\begin{proof}
We need to find a solution to
\begin{equation}
\Phi(\delta_4,\delta_3,\delta_2,0;G_1(\sigma'_1;U_1)) = \Phi(\beta_4,\beta_3,\beta_2,0;G_1(\sigma_1; \Phi(\alpha_4,\alpha_3,\alpha_2,\alpha_1;U_1))).
\end{equation}
By Lemma \ref{lem:matrixa},
\begin{align*}
\mathrm{det} \brac*{  \frac{\partial \Phi(\delta_4,\delta_3,\delta_2,0;G_1(\sigma'_1;U_1)) }{\partial(\sigma'_1, \delta_2,\delta_3,\delta_4)}} = \mathrm{det} A > 0.
\end{align*}
Now the required result follows by using Lemma \ref{lem:1varest}, and noting that
$\sigma'_1 = \sigma_1$, $\alpha_1 = \alpha_2 = \alpha_3 = \alpha_4 =0$
and that $(\sigma'_1, \delta_2, \delta_3, \delta_4)=(\sigma_1, \beta_2,\beta_3,\beta_4)$
is a solution.
\end{proof}
	
\subsubsection{Interaction between the strong $4$--shock and weak waves}

Now, by symmetry with the $1$--shock case, we deduce the following results:
\begin{proposition}\label{prop:strong4aboveest}
Suppose that $U_1 \in O_\epsilon(U_{m1})$ and $U_2,U_3 \in O_\epsilon(U_{m2})$ such that
\[
\{ U_1,U_2 \} = (\alpha_1,\alpha_2,\alpha_3,\sigma_4), \qquad \{ U_2,U_3 \} = (\beta_1, \beta_2, \beta_3,\beta_4 ).
\]
Then there exists a unique $(\delta_1,\delta_2,\delta_3,\sigma'_4)$ such that
\[
\{ U_1, U_3 \} = (\delta_1,\delta_2,\delta_3,\sigma'_4).
\]
Moreover,
\begin{align*}
\delta_1 = \alpha_1 + \sum_{j=1}^4 K_{4sa1j} \beta_j , \qquad \delta_2 = \alpha_2 + \sum_{j=1}^4 K_{4sa2j} \beta_j,   \\
\delta_3 = \alpha_3 + \sum_{j=1}^4 K_{4sa3j} \beta_j,   \qquad
\sigma'_4 = \sigma_4 + \sum_{j=1}^4 K_{4sa4j} \beta_j,
\end{align*}
where  $\sum_{i,j} |K_{4sa ij}| \leq M$.
\end{proposition}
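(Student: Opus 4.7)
The plan is to prove Proposition \ref{prop:strong4aboveest} as the $4$-shock mirror of Proposition \ref{prop:strong1belowest}, with the $1\leftrightarrow 4$ roles exchanged. In both cases, a strong shock is crossed after a layer of weak waves on its incoming side, with additional weak waves on its outgoing side, and the interaction is resolved into the same pattern with new strengths; the proof is essentially a transcription under the natural symmetry $(y,v)\mapsto(-y,-v)$ of the Euler equations.

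First, I would set up the interaction equation. Writing $U_3$ along both wave patterns, using $\Phi$ from Lemma 3.1 together with the $4$-shock parameterization of Lemma \ref{lem:paramstrong4} (suitably oriented so that the strong $4$-shock is traversed upward from below; denote by $\tilde G_4(\sigma;V)$ the state above $V$ connected to it by a $4$-shock of strength $\sigma$), one obtains
\begin{equation*}
\Phi(\beta_4,\beta_3,\beta_2,\beta_1;\tilde G_4(\sigma_4;\Phi(0,\alpha_3,\alpha_2,\alpha_1;U_1)))=\tilde G_4(\sigma'_4;\Phi(0,\delta_3,\delta_2,\delta_1;U_1)),
\end{equation*}
which is a system of four scalar equations in the four unknowns $(\sigma'_4,\delta_3,\delta_2,\delta_1)$.

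Next, I would verify the Jacobian condition for the implicit function theorem. Differentiating the right-hand side in $(\sigma'_4,\delta_3,\delta_2,\delta_1)$ at the background (where $\alpha=\beta=0$, $\sigma_4=\sigma'_4=\sigma_{40}$, and $U_1$ lies at the relevant background value near $U_{m2}$ so that $\tilde G_4(\sigma_{40};U_1)$ is near $U_a$), the determinant reduces, modulo the non-singular factor $\nabla_V\tilde G_4$, to a quantity proportional to $\mathrm{det}(A_4\mathbf r_1,A_4\mathbf r_2,A_4\mathbf r_3,A_4\partial_{\sigma_4}G_4(\sigma_{40};U_a))|_{U=U_{m2}}$ with $A_4:=\nabla_U H(U_{m2})-\sigma_{40}\nabla_U W(U_{m2})$. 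This is the $4$-shock analog of Lemma \ref{lem:matrixadet}, non-zero by either a direct calculation paralleling \cite[pp.~299--300]{shockwedges} or by invoking the symmetry which swaps $\lambda_1\leftrightarrow-\lambda_4$ and hence the $1$- and $4$-shock analyses. The implicit function theorem then supplies unique $C^1$ solutions $(\sigma'_4,\delta_1,\delta_2,\delta_3)$ of the remaining parameters near the background.

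Finally, I would derive the expansion. At $\beta=0$ the equation is trivially satisfied by $\sigma'_4=\sigma_4$ and $\delta_i=\alpha_i$ for $i=1,2,3$. Applying the one-variable identity of Lemma \ref{lem:1varest} componentwise in $\beta$ to the implicit solutions then yields
\begin{equation*}
\delta_i=\alpha_i+\sum_{j=1}^{4}K_{4sa\,ij}\beta_j,\qquad \sigma'_4=\sigma_4+\sum_{j=1}^{4}K_{4sa\,4j}\beta_j,
\end{equation*}
with each $K_{4sa\,ij}$ an integral of the corresponding $C^0$ partial derivative along the segment from $0$ to $\beta$; continuity on the compact neighborhood of the background gives the uniform bound $\sum_{i,j}|K_{4sa\,ij}|\leq M$. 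The main obstacle is essentially bookkeeping: Lemma \ref{lem:paramstrong4} parameterizes $G_4$ with its base point above the shock (in contrast to $G_1$ from below), so care is needed in introducing and differentiating the mirrored parameterization $\tilde G_4$. Once this is handled the computation is routine; notably, no reflection coefficient bound of the form $|K|<1$ is asserted here, so the delicate analysis required in Propositions \ref{prop:strong1aboveest} and \ref{prop:belowvortex} is not needed for this proposition.
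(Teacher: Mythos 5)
Your proposal is correct and follows exactly the route the paper intends: the paper proves Proposition \ref{prop:strong4aboveest} only by the remark ``by symmetry with the $1$--shock case,'' and your argument is precisely the mirrored transcription of the paper's proof of Proposition \ref{prop:strong1belowest} (set up the interaction equation with the strong-shock parameterization, verify the nondegenerate Jacobian via the $4$--shock analogue of Lemma \ref{lem:matrixadet}, and expand in $\beta$ with Lemma \ref{lem:1varest} about the trivial solution at $\beta=0$). Your attention to the orientation of $\tilde G_4$ versus $G_4$, and your observation that no $|K|<1$ reflection bound is needed here, are both consistent with the paper.
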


\begin{proposition} \label{prop:strong4belowest}
If $U_1,U_2 \in O_\epsilon(U_{m1})$ and $U_3 \in O_\epsilon(U_{a})$ with
\[
\{  U_1, U_2 \} = (0,\alpha_2,\alpha_3,\alpha_4), \qquad \{U_2,U_3 \} = (\beta_1,\beta_2,\beta_3,\sigma_4),
\]
then there exists a unique $(\delta_1,\delta_2,\delta_3,\sigma'_4)$ such that
\[
\{ U_1, U_3 \} = (\delta_1,\delta_2,\delta_3,\sigma'_4).
\]
Moreover,
\begin{align*}
&\delta_1 =  \beta_1 + K_{4sb1} \alpha_4 + O(1) \Delta''',
    \qquad\qquad\,\,   \delta_2 = \beta_2 + \alpha_2 + K_{4sb2}\alpha_4 + O(1) \Delta''', \\
&\delta_3 = \beta_3 + \alpha_3 + K_{4sb3} \alpha_4 + O(1) \Delta ''',
	 \qquad\, \sigma'_4 = \sigma_4  + K_{4sb4} \alpha_4 +  O(1) \Delta''',
\end{align*}
where
\begin{equation}\label{4.13a}
\Delta''' = |\beta_1|(|\alpha_2| + |\alpha_3|) , \qquad |K_{4sb1}|<1, \qquad \sum_{j=1}^3 |K_{4sbj}|  \leq M.
\end{equation}
\end{proposition}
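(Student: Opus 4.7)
The plan is to mirror the proof of Proposition \ref{prop:strong1aboveest} using the structural symmetry between the extreme genuinely nonlinear families $j=1$ and $j=4$. The geometric setup here (weak $\alpha$-waves, then weak $\beta_1,\beta_2,\beta_3$, then strong $4$-shock, all linked from $U_1$ to $U_3$) is the reflection, across the entropy-wave/vortex-sheet axis, of the one analyzed in Proposition \ref{prop:strong1aboveest}. Accordingly, I expect to reuse Propositions \ref{prop:weakwaves} and analogs of Lemmas \ref{lem:matrixa} and \ref{lem:paramstrong4} essentially verbatim.

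First I would rewrite the hypothesized identity $\{U_1,U_3\}=(\delta_1,\delta_2,\delta_3,\sigma_4')$ as the equation
\begin{equation*}
G_4\bigl(\sigma_4';\Phi(0,\delta_3,\delta_2,\delta_1;U_1)\bigr)
 = G_4\bigl(\sigma_4;\Phi(0,\beta_3,\beta_2,\beta_1;\Phi(\alpha_4,\alpha_3,\alpha_2,0;U_1))\bigr),
\end{equation*}
and then use Proposition \ref{prop:weakwaves} to collapse the two consecutive weak-wave packets on the right into a single packet $(\gamma_1,\gamma_2,\gamma_3,\gamma_4)$ with
$\gamma_1=\beta_1+O(1)\tilde\Delta$, $\gamma_i=\alpha_i+\beta_i+O(1)\tilde\Delta$ for $i=2,3$, and $\gamma_4=\alpha_4+O(1)\tilde\Delta$, where $\tilde\Delta=\Delta'''+|\alpha_4|(|\beta_1|+|\beta_2|+|\beta_3|)$.

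Next I would apply the implicit function theorem to solve for $(\sigma_4',\delta_1,\delta_2,\delta_3)$ as a $C^1$ function of $(\gamma_1,\gamma_2,\gamma_3,\gamma_4,\sigma_4,U_1)$. The Jacobian at the background is nonzero: by the $4$-shock analog $\tilde A:=\nabla_U H(U_a)-\sigma_{40}\nabla_UW(U_a)$ of the matrix in Lemma \ref{lem:matrixa} (whose positivity is already built into the parametrization $G_4$ of Lemma \ref{lem:paramstrong4}), the relevant determinant reduces to $\mathrm{det}(\tilde A\mathbf r_1,\tilde A\mathbf r_2,\tilde A\mathbf r_3,\tilde A\partial_{\sigma_4}G_4(\sigma_{40},U_a))/\mathrm{det}\tilde A$, which is positive by the same entropy-condition argument used for the $1$-shock. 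Observing that $\gamma_4=0$ forces the trivial solution $\sigma_4'=\sigma_4$ and $\delta_i=\gamma_i$, I would then write
\begin{equation*}
 \sigma_4'=\sigma_4+K_{4sb4}\gamma_4,\qquad \delta_i=\gamma_i+K_{4sbi}\gamma_4\quad(i=1,2,3),
\end{equation*}
with each $K_{4sbi}=\int_0^1\partial_{\gamma_4}(\cdot)(t\gamma_4,\ldots)\,dt$, and substitute back the expressions for $\gamma_i$, absorbing the $O(1)|\alpha_4|(|\beta_1|+|\beta_2|+|\beta_3|)$ contributions into the $K_{4sbi}\alpha_4$ terms to produce exactly the desired form with error $\Delta'''=|\beta_1|(|\alpha_2|+|\alpha_3|)$.

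The main obstacle is the reflection bound $|K_{4sb1}|<1$. I expect to handle it by the same Cramer's-rule computation used at the end of the proof of Proposition \ref{prop:strong1aboveest}: write
\begin{equation*}
 |\partial_{\gamma_4}\delta_1|
 = \biggl|\frac{\mathrm{det}(\tilde A\mathbf r_4,\tilde A\mathbf r_2,\tilde A\mathbf r_3,\tilde A\partial_{\sigma_4}G_4(\sigma_{40},U_a))}{\mathrm{det}(\tilde A\mathbf r_1,\tilde A\mathbf r_2,\tilde A\mathbf r_3,\tilde A\partial_{\sigma_4}G_4(\sigma_{40},U_a))}\biggr|,
\end{equation*}
and, exploiting the analog of Lemma \ref{lem:shock4speed} (i.e.\ $u_a<u_{m2}<(1+\tfrac1\gamma)u_a$ and $\sigma_{40}>0$) together with the entropy inequalities \eqref{eq:shockspeeds} for the $4$-shock, factor this ratio as
\begin{equation*}
 \Bigl|\frac{\sigma_{40}-\lambda_4(U_{m2})}{\sigma_{40}+\lambda_4(U_{m2})}\Bigr|\cdot\Bigl|\frac{u_{m2}\lambda_1(U_{m2})\tilde P-\tfrac{c_{m1}^2}{\gamma-1}\sigma_{40}u_a}{u_{m2}\lambda_1(U_{m2})\tilde P+\tfrac{c_{m1}^2}{\gamma-1}\sigma_{40}u_a}\Bigr|,
\end{equation*}
with $\tilde P$ the $4$-shock counterpart of $P$ in Lemma \ref{lem:matrixa}. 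Each factor is strictly less than $1$ in absolute value provided $\tilde P>0$, which follows by the first and fourth Rankine-Hugoniot conditions exactly as in Lemma \ref{lem:matrixa}. Everything else, including the bound $\sum_{j=1}^3|K_{4sbj}|\le M$, follows from boundedness of the integrands in a fixed neighborhood of the background state.
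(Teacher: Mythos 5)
Your overall strategy is exactly what the paper intends: the paper offers no written proof of this proposition at all, merely asserting it ``by symmetry with the $1$--shock case,'' and your plan correctly carries out that symmetry by transplanting the proof of Proposition \ref{prop:strong1aboveest} --- collapsing the two weak packets into a single $(\gamma_1,\gamma_2,\gamma_3,\gamma_4)$ via Proposition \ref{prop:weakwaves} (your $\tilde\Delta$ agrees with $\Delta(\alpha,\beta)$ there), invoking the implicit function theorem with the nonvanishing determinant supplied by the $4$--shock analogue of Lemma \ref{lem:matrixa}, representing the coefficients $K_{4sbj}$ as integrals of partial derivatives, and absorbing the $|\alpha_4|(|\beta_1|+|\beta_2|+|\beta_3|)$ part of $\tilde\Delta$ into the $K_{4sbj}\alpha_4$ terms. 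The Cramer's-rule identity you write for $\partial_{\gamma_4}\delta_1$ (the $\tilde A\mathbf r_1$ column of the denominator replaced by $\tilde A\mathbf r_4$ in the numerator) is also the correct mirror of the paper's identity for $\partial_{\gamma_1}\delta_4$. (A cosmetic point: to mirror the paper, $\tilde A$ should be formed at $U_{m2}$, the state adjacent to the weak waves, rather than at $U_a$; since $\tilde A$ cancels in the ratio, this is harmless.)

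The gap is in the final evaluation. Applying the reflection $(\sigma_{10},\lambda_1(U_{m1}),u_b)\mapsto(-\sigma_{40},-\lambda_4(U_{m2}),u_a)$ to the paper's explicit product yields
\[
\left|\frac{\sigma_{40}+\lambda_4(U_{m2})}{\sigma_{40}-\lambda_4(U_{m2})}\right|\cdot
\left|\frac{u_{m2}\,\lambda_1(U_{m2})\,\tilde P+\frac{c^2}{\gamma-1}\sigma_{40}u_a}{u_{m2}\,\lambda_1(U_{m2})\,\tilde P-\frac{c^2}{\gamma-1}\sigma_{40}u_a}\right|,
\]
which is the reciprocal of what you wrote. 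Moreover, your justification that ``each factor is strictly less than $1$'' cannot be right even for the correct expression: the entropy condition \eqref{eq:shockspeeds} gives $0<\sigma_{40}<\lambda_4(U_{m2})$, so the first factor above is strictly \emph{greater} than $1$ (exactly as the paper's first factor $|(\sigma_{10}+\lambda_1(U_{m1}))/(\sigma_{10}-\lambda_1(U_{m1}))|>1$, since $\lambda_1(U_{m1})<\sigma_{10}<0$); it is only the \emph{product} of the two factors that falls below $1$, and that is the genuinely quantitative content inherited from \cite{shockwedges}, not a factor-by-factor check. Taken at face value, your displayed product is the reciprocal of a number less than $1$, hence greater than $1$, and the proof of $|K_{4sb1}|<1$ would collapse. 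The argument is rescued only because the quantity that matters is your (correct) Cramer ratio, which is the exact mirror image of the ratio already shown to be less than $1$ in Proposition \ref{prop:strong1aboveest}; you should either quote that bound directly through the reflection, or redo the determinant evaluation and verify that the product --- not each factor --- is below $1$ using $\tilde P>0$.
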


\section{Approximate Solutions and  BV Estimates}

In this section, we develop a modified Glimm difference scheme,
based on the ones in \cite{compvortex,shockwedges}, to construct a family of approximate solutions
and establish necessary estimates that will be used later to obtain its convergence to
an entropy solution to (\ref{eq:euler}) and (\ref{eq:initdata}).

\subsection{A modified Glimm scheme}
For $\Delta x > 0$, we define
\[
\Omega_{\Delta x} = \bigcup_{k \geq 1} \Omega_{\Delta x, k},
\]
where
\[
\Omega_{\Delta x, k} = \{ (x,y)\, :\, (k-1) \Delta x < x \leq k \Delta x \}.
\]
We choose $\Delta y > 0$ such that the following condition holds:
\begin{align*}
\frac{\Delta y}{ \Delta x}  > \max
\Big\{\max_{1\le i\le 4} \sup_{ \sigma \in  O_{\hat \epsilon}(\sigma_{i0})} |\sigma|,
\,\max_{j=1,4} \, \big\{\sup_{U \in O_{\epsilon}(U_a) \cup O_\epsilon(U_b)} |\lambda_j(U)|\big\},
 \,\sup_{U \in O_{\epsilon}(U_{m1}) \cup O_{\epsilon}(U_{m2})} |\lambda_2(U)|\Big\}.
\end{align*}

We also need to make sure that the strong shocks do not interact: If we take the neighborhoods small enough,
there exist $a$ and $b$ with $ -1 < a < 0 < b < 1$ such that
\[
\sigma_{1} <  \frac{\Delta y}{\Delta x} a < \lambda_2(U)  <  \frac{\Delta y}{\Delta x} b < \sigma_4
\]
for all $U \in O_{\epsilon}(U_{m1}) \cup O_{\epsilon}(U_{m2})$ and $\sigma_i \in O_{\hat \epsilon}(\sigma_{i0}), i = 1,4$.
Thus, a strong $1$--shock emanating from $((k-1) \Delta x, n \Delta y)$ meets $x = k \Delta x$ in the line
segment $ \{ k \Delta x \} \times ( (n-1) \Delta y, (n+a) \Delta y)$,
a combined strong $2$--vortex sheet/$3$--entropy wave emanating from $((k-1) \Delta x, n \Delta y)$ meets $x = k \Delta x$
in the line segment $ \{ k \Delta x \} \times ( (n+a) \Delta y, (n+b) \Delta y)$, and
a strong $4$--shock emanating from $((k-1) \Delta x, n \Delta y)$ meets $x = k \Delta x$
in the line segment $ \{ k \Delta x \} \times ( (n+b) \Delta y, (n+1) \Delta y)$.

Now define
\[
a_{k,n} = (n + 1 + \theta_k) \Delta y,
\qquad k \in \N_0,\,\, n \in \mathbb{Z}, \,\, n + k \equiv 1 \, (\mathrm{mod } \, 2),
\]
where $\theta_k$ is randomly chosen in $(-1,1)$.
We then choose
\[
P_{k,n} = (k \Delta x, a_{k,n}), \qquad k \in \N_0, \,\, n \in \mathbb{Z},\,\, n + k \equiv 1\, (\mathrm{mod} \, 2),
\]
to be the mesh points, and define the approximate solutions $U_{\Delta x, \theta}$ globally in $\Omega_{\Delta x}$
for any $\theta = (\theta_0,\theta_1,\ldots)$ in the following inductive way:

To avoid the issues of interaction of strong fronts, we separate out the initial data for $k=0$.
Define $U_{\Delta x, \theta}\mid_{x=0}$ as follows:

\smallskip
First, in $\Omega_{\Delta x, 0} \setminus \{ -4 \Delta y \leq y \leq  4 \Delta y \}$, define
\[
U_{0, \Delta x, \theta}(y)
:= \begin{cases}
U_0( (2n - 3 + \theta_0) \Delta y) & \quad\mbox{for $y \in (2n \Delta y, (2n+2) \Delta y),   n \geq 2$}, \\[2mm]
U_0( (2n + 3 + \theta_0) \Delta y) & \quad \mbox{for $y \in (2n \Delta y, (2n+2) \Delta y),  n \leq -2$}.
\end{cases}
\]
Then define $U_0(y)$ to be the state that connects to $U_0(-4\Delta y)$ by a strong $1$--shock of
strength $\sigma_{10}$ on $(-4 \Delta y, 0)$ which lies in $O_\epsilon(U_{m1})$,
and the state that connects to $U_0(0)$ by a combined strong vortex sheet/entropy wave of strength
$(\sigma_{20},\sigma_{30})$ on $(0, 4 \Delta y)$ which lies in $O_\epsilon(U_{m2})$.
Thus, for $\epsilon$ small,
$U_{0,\Delta x,\theta}(y) \in O_{\epsilon}(U_a)$ for $y > 4 \Delta y$,
and $U_{0,\Delta x, \theta}(y) \in O_{\epsilon}(U_b)$ for $y < - 4 \Delta y$.

Now, assume that $U_{\Delta x, \theta}$ on $\{ 0 \leq x \leq k \Delta x \}$ has been defined for $k \geq 0$.
We solve the family of Riemann problems for $n \in \mathbb{Z}$
with $k + n \equiv 0 \,\, (\mathrm{mod} \, 2)$:
\[
\begin{cases}
W(U_n)_x + H(U_n)_y = 0 &\quad  \text{in } T_{k,n}, \\
U_n\mid_{x = k \Delta x} = U^0_{k,n}
\end{cases}
\]
with
\[
T_{k,n} = \{ k \Delta x \leq x \leq (k+1) \Delta x, (n-1)\Delta y \leq y \leq (n+1) \Delta y \},
\]
then set
\[
U_{\Delta x, \theta} = U_n \qquad \text{on } T_{k,n}, n \in \N,
\]
and define
\[
U_{\Delta x, \theta} \mid_{x = (k+1)\Delta x} = U_{\Delta x, \theta}((k+1)\Delta x - ,a_{k+1,n}),
\qquad k+n \equiv 0 \,\, (\mathrm{mod} \, 2).
\]

Now, as long as we can provide a uniform bound on the solutions and show the Riemann problems involved always have solutions,
this algorithm defines a family of approximate solutions globally.
\begin{figure}[h!]
	\includegraphics[scale=0.75]{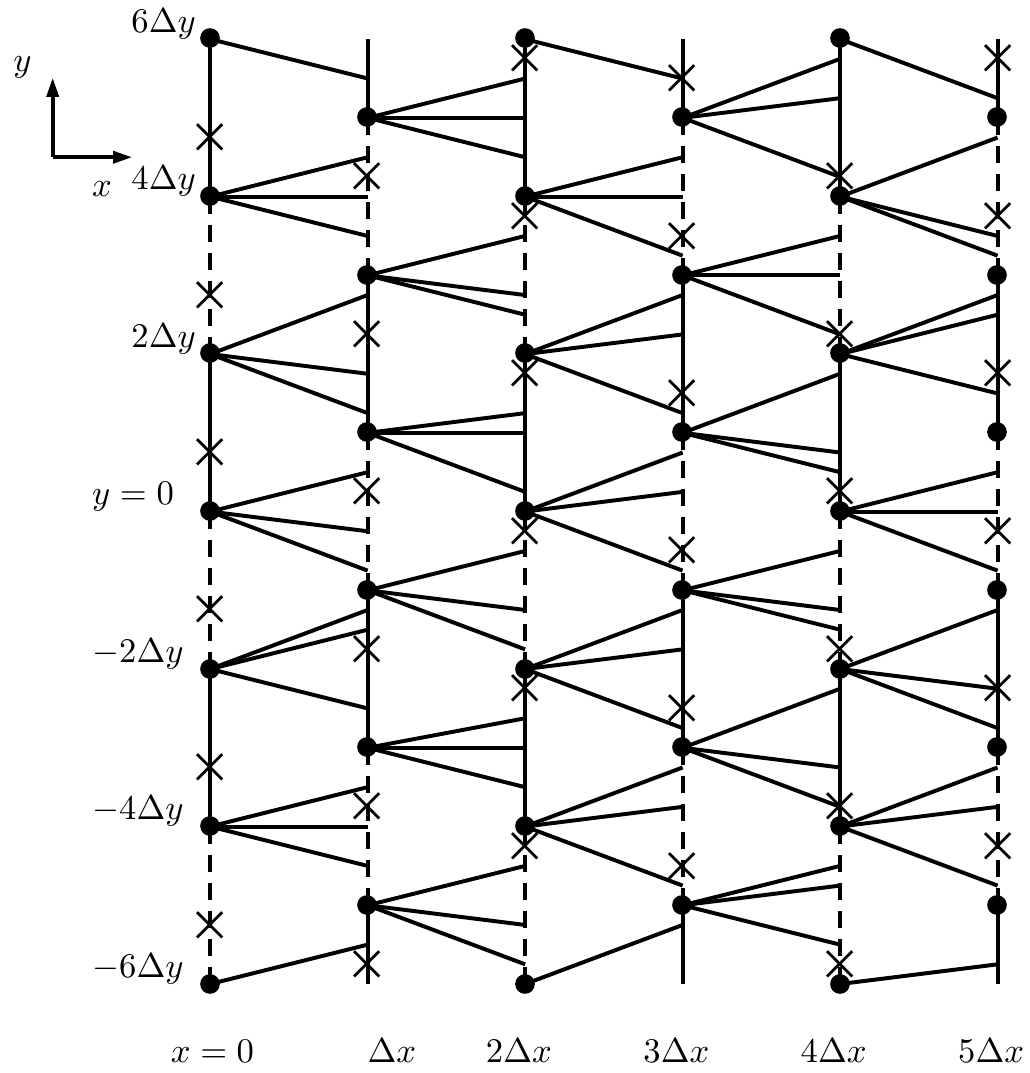}
	\centering
	\caption{Glimm's scheme}
	\label{fig:glimmdiagram}
\end{figure}

\subsection{Glimm-type functional and its bounds}
In this section, we prove the approximate solutions are well defined in $\Omega_{\Delta x}$ by uniformly bounding them.
We first introduce the following lemma,
which is a combination of  Lemma 6.7 in \cite[pp. 305]{shockwedges} and Lemma 4.1 in \cite[pp. 1677]{compvortex},
and follows from that $\Phi$, $G_1$, $G_{2,3}$, and $G_4$ are $C^1$ functions.

\begin{lemma} \label{lemma:bounds}
The following bounds of the approximate solutions of the Riemann problems hold{\rm :}
\begin{itemize}
\item If $\{ U_1, U_2 \} = (\alpha_1,\alpha_2, \alpha_3,\alpha_4)$ with $U_1,U_2 \in O_{\epsilon}(U_i)$ for fixed $i \in \{ a, m1,m2, b \}$,
then
\[
| U_1 - U_2| \leq B_1 ( |\alpha_1| + |\alpha_2| + |\alpha_3| + |\alpha_4|)
\]
with
\[
B_1 = \max_{i \in \{a, m1, m2, b \}, 1 \leq j \leq 4}
\Big\{\sup_{U \in O_{\epsilon}(U_i)} |\partial_{\alpha_j} \Phi(\alpha_4,\alpha_3,\alpha_2,\alpha_1;U)|\Big\}.
\]

\item For any $\sigma_j \in O_{\hat \epsilon}(\sigma_{j0})$, $j=2,3$, so that
\[
G_{2,3}(\sigma_3,\sigma_2,U) \subset  O_{\epsilon}(U_{m2}),
\]
then, when $U \in O_{\epsilon}(U_{m1})$ for some $\hat \epsilon = \hat \epsilon(\epsilon)$
with $\hat \epsilon \rightarrow 0$ as $\epsilon \rightarrow 0$,
\[
|G_{2,3}(\sigma_3,\sigma_2,U) -G_{2,3}(\sigma_{30},\sigma_{20},U)| \leq B_2\brac*{|\sigma_3 - \sigma_{30}| + |\sigma_2 - \sigma_{20}|}
\]
with
\[
B_2 = \max_{j=2,3} \Big\{\sup_{\sigma_j \in O_{\hat \epsilon}(\sigma_{j0})} |G'_{\sigma_j}(\sigma_3,\sigma_2,U)|\Big\}.
\]

\item For $\sigma_1 \in O_{\hat \epsilon}(\sigma_{10})$,
$G_1(\sigma_{\check \epsilon}(\sigma_{10}),U) \subset O_{\epsilon}(U_{m1})$ when $U \in O_{\epsilon}(U_b)$, and
\[
|G_1(\sigma,U) - G_1(\sigma_{10},U)| \leq B_3|\sigma - \sigma_{10}|,
\]
where $B_3 = \sup_{\sigma_1 \in O_{\check \epsilon}(\sigma_{10})} |\partial_{\sigma_1} G_1(\sigma_1, U)| $.
		
\smallskip
\item For $\sigma_4 \in O_{\hat \epsilon}(\sigma_{40})$,
$\tilde G_4(\sigma_{\check \epsilon}(\sigma_{40}),U) \subset O_{\epsilon}(U_{m2})$ when $U \in O_{\epsilon}(U_a)$, and
\[
|\tilde G_4(\sigma,U) - \tilde G_4(\sigma_{10},U)| \leq B_3|\sigma - \sigma_{40}|,
\]
where $B_4 = \sup_{\sigma_4 \in O_{\check \epsilon}(\sigma_{40})} |\partial_{\sigma 4} G_1(\sigma_4, U)| $.
\end{itemize}
\end{lemma}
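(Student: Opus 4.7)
The plan is to derive each of the four estimates by applying the fundamental theorem of calculus (Lemma 4.1, first bullet) to the relevant $C^1$ map at its natural base point, and then to extract the constants $B_i$ as suprema of partial derivatives over compact neighborhoods. No ingredient beyond $C^1$ regularity and the elementary continuity arguments underlying the parameterizations in Lemmas 3.5--3.7 is required.

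For the first bullet, I would use $\Phi(0,0,0,0;U_1)=U_1$ to write
\[
U_2 - U_1 \;=\; \Phi(\alpha_4,\alpha_3,\alpha_2,\alpha_1;U_1) - \Phi(0,0,0,0;U_1)
\;=\; \sum_{j=1}^{4} \alpha_j \int_0^1 \partial_{\alpha_j}\Phi(t\alpha_4,t\alpha_3,t\alpha_2,t\alpha_1;U_1)\,dt.
\]
The triangle inequality together with the $C^1$ boundedness of $\partial_{\alpha_j}\Phi$ on the closure of the small parameter and state neighborhoods yields the inequality with $B_1$ exactly as defined. The remaining bullets are proved in the same spirit applied to $G_{2,3}$ around $(\sigma_{20},\sigma_{30})$, to $G_1$ around $\sigma_{10}$, and to $\tilde G_4$ around $\sigma_{40}$. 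For instance,
\[
G_{2,3}(\sigma_3,\sigma_2,U) - G_{2,3}(\sigma_{30},\sigma_{20},U)
\;=\; \sum_{j=2,3}(\sigma_j-\sigma_{j0}) \int_0^1 \partial_{\sigma_j}G_{2,3}\bigl(\sigma_{30}+t(\sigma_3-\sigma_{30}),\,\sigma_{20}+t(\sigma_2-\sigma_{20}),\,U\bigr)\,dt,
\]
with one-variable analogues for $G_1$ and $\tilde G_4$; the $C^1$ smoothness established in Lemmas 3.5 and 3.7 guarantees that $B_2,B_3,B_4$ are finite.

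The remaining content is the containment assertions. Since $G_{2,3}(\sigma_{30},\sigma_{20},U_{m1})=U_{m2}$, $G_1(\sigma_{10},U_b)=U_{m1}$, and $\tilde G_4(\sigma_{40},U_a)=U_{m2}$ by construction of the background solution, joint continuity of each map in its arguments immediately produces a modulus $\hat\epsilon=\hat\epsilon(\epsilon)$ with $\hat\epsilon\to 0$ as $\epsilon\to 0$, ensuring that, whenever the strengths and the base state lie within $\hat\epsilon$ of their background values, the image lies in the $\epsilon$--neighborhood of the corresponding reference state. The only bookkeeping point is to choose a single $\hat\epsilon$ small enough that all three containment statements hold simultaneously; this is unproblematic because each map is uniformly continuous on the relevant compact set. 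I do not anticipate a substantive obstacle: the entire lemma is a first-order Taylor-type consequence of regularity already proved, and its role is purely to furnish the constants $B_1,\ldots,B_4$ used in the subsequent BV estimates for the Glimm scheme.
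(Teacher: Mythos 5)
Your proposal is correct and is exactly the argument the paper intends: the paper gives no written proof, simply remarking that the lemma ``follows from that $\Phi$, $G_1$, $G_{2,3}$, and $G_4$ are $C^1$ functions'' (citing the analogous lemmas in the precursor papers), and your fundamental-theorem-of-calculus expansion plus the continuity argument for the containments is the standard elaboration of that remark. No gaps.
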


We now show that $U_{\Delta x, \theta}$ can be defined globally.
Assume that $U_{\Delta x, \theta}$ has been defined in $\{x < k \Delta x \}$, $k \geq 1$,
by the steps in \S5,
and assume the following conditions are satisfied:
\begin{description}
\item[$\mathbf{C_1(k-1)}$]  In each $\Omega_{\Delta x, j}$, $0 \leq j \leq k-1$,
there are a strong $1$--shock $\chi_1^{(j)}$, a combined strong vortex sheet/entropy wave $\chi_{2,3}^{(j)}$,
and a strong $4$--shock $\chi_4^{(j)}$
in  $U_{\Delta x, \theta}$ with strengths $\sigma_1^{(j)}, (\sigma_2^{(j)},\sigma_3^{(j)})$, and $\sigma_4^{(j)}$
so that   $\sigma_i^{(j)} \in O_{\epsilon}(\sigma_{i0})$,
which divide   $\Omega_{\Delta x, j}$ into four parts{\rm :}
$\Omega^{(1)}_{\Delta x, j}$ -- the part below $\chi_1^{(j)}$,
$\Omega^{(2)}_{\Delta x, j}$ -- the part between $\chi^{(j)}_1$ and $\chi^{(j)}_{2,3}$,
$\Omega^{(3)}_{\Delta x, j}$ -- the part between $\chi^{(j)}_{2,3}$ and $\chi^{(j)}_{4}$,
and $\Omega^{(4)}_{\Delta x, j}$ -- the part above $\chi^{(j)}_{4}${\rm ;}

\smallskip
\item[$\mathbf{C_2(k-1)}$] For $0 \leq j \leq k-1$,
\[
\chi^{(j)}_{1}(j \Delta x) + 4 \Delta y \leq \chi^{(j)}_{2,3}(j \Delta x) \leq \chi^{(j)}_4(j \Delta x);
\]

\smallskip
\item[$\mathbf{C_3(k-1)}$] For $0 \leq j \leq j-1$,
\begin{align*}
& U_{\Delta x, \theta} \mid_{\Omega^{(1)}_{\Delta x, j}} \in O_{\epsilon}(U_b),
& &U_{\Delta x, \theta} \mid_{\Omega^{(2)}_{\Delta x, j}} \in O_{\epsilon}(U_{m1}),\\
& U_{\Delta x, \theta} \mid_{\Omega^{(3)}_{\Delta x, j}} \in O_{\epsilon}(U_{m2}),
& & U_{\Delta x, \theta} \mid_{\Omega^{(4)}_{\Delta x, j}} \in O_{\epsilon}(U_a).
\end{align*}
\end{description}
To see that $C_2(k)$ holds if $C_2(k-1)$ holds,
by the discussion earlier, for $\chi_{4}^{(k)}$ to emanate further down than $\chi_{4}^{(k-1)}$,
we require $\theta_k > b$,
which implies that $\chi_{2,3}^{(k)}$ and $\chi_1^{(k)}$ emanate further down than $\chi_{2,3}^{(k-1)}$
and $\chi_1^{(k-1)}$.
Also, for $\chi_{1}^{(k)}$ to emanate further up than $\chi_{1}^{(k-1)}$,
we require $\theta_k < a$, which implies that $\chi_{2,3}^{(k)}$ and $\chi_1^{(k)}$ emanate further
up than $\chi_{2,3}^{(k-1)}$ and $\chi_{2,3}^{(k-1)}${\rm ;}
see Fig. \ref{fig:furtherdown} for an illustration of the first situation.

\begin{figure}[h!]
	\includegraphics[scale=0.70]{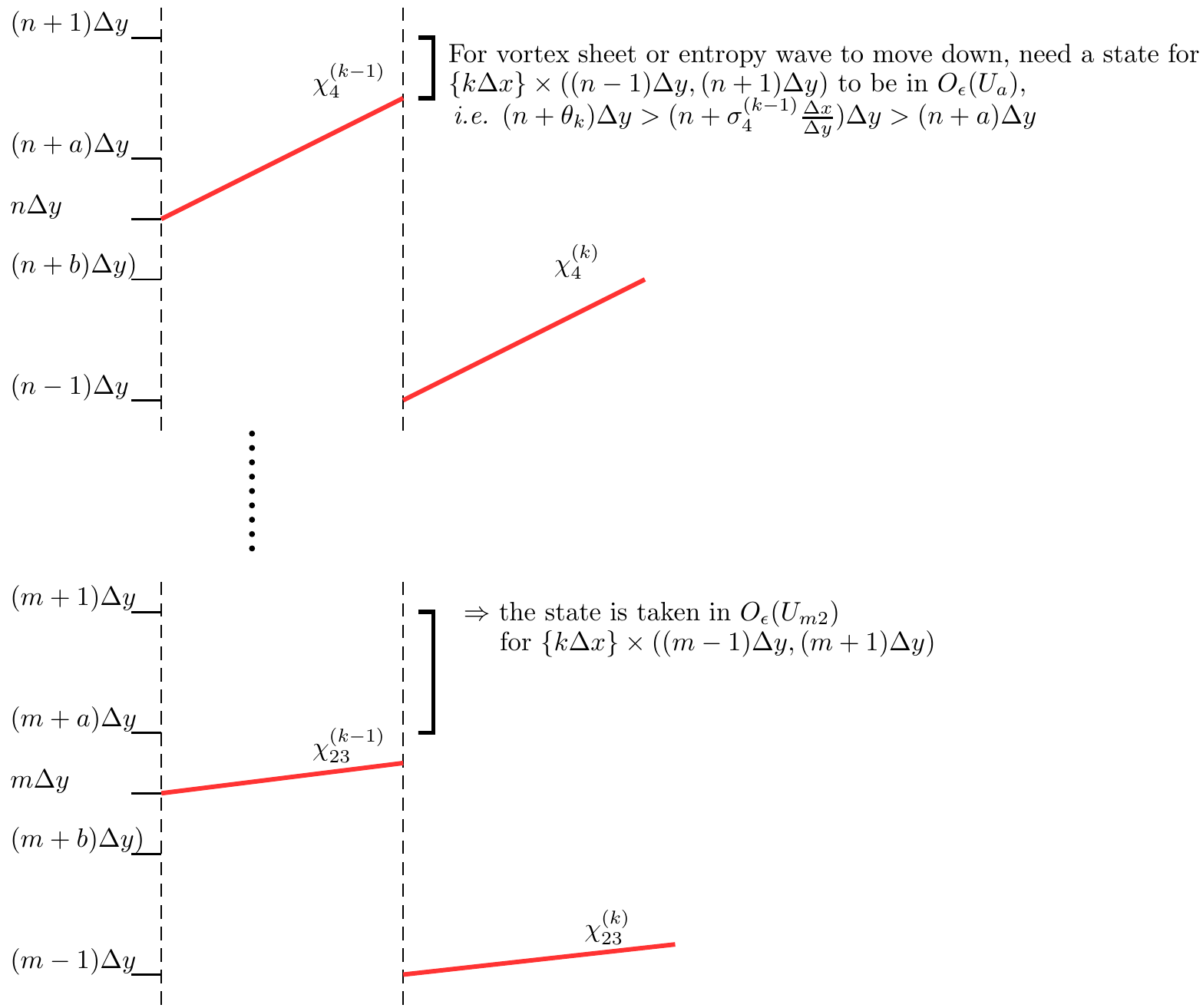}
	\centering
	\caption{Separation of the initial data}
	\label{fig:furtherdown}
\end{figure}

We will establish a bound on the total variation of $U_{\Delta x, \theta}$ on the $k$-mesh curves to establish $C_3(k)$ and $C_1(k)$.

\begin{definition}
A $k$-mesh curve is an unbounded piecewise linear curve consisting of line segments between the mesh points,
lying in the strip:
\[
(k-1) \Delta x \leq x \leq (k+1) \Delta x
\]
with each line segment of form $P_{k-1,n-1} P_{k,n}$ or $P_{k,n} P_{k+1,n+1}$.
\end{definition}

Clearly, for any $k>0$, each $k$-mesh curve $I$ divides plane $\R^2$ into a part $I^+$ and a part $I^-$,
where $I^-$ is the one containing set $\{ x < 0 \}$.
As in \cite{glimm}, we partially order these mesh curves by saying $J > I$ if every point of $J$ is either on $I$
or contained in $I^+$,
and we call $J$ an immediate successor to $I$ if $J > I$ and every mesh point of $J$,
except one, is on $I$. We now define a Glimm-type functional on these mesh curves.

\begin{definition}
Define
\begin{align*}
			F_s(J) &= C^* \sum_{j=1}^4 |\sigma_i^J - \sigma_{i0}| + F(J)  + K Q(J),  \\
			F(J) &= L(J) + \tilde K_{43} |\alpha_{4s1}| + \tilde K_{12} |\alpha_{1s4}|,  \\[2mm]
			L(J) &= L^1(J) + L^2(J) + L^3(J) + L^4(J), \\
			L^j(J) &= \sum_{j=1}^4 K^*_{ij} L_i^j(J), \\
			L^j_i(J) & = \sum \cbrac*{ |\alpha_i| \, :\, \alpha_i \text{ crosses } J \text{ in region } (j) }, \\[2mm]
			Q(J) &= \sum \{ |\alpha_i| |\beta_j | \, : \, \text{ both } \alpha_i \text{ and } \beta_j \text{ cross } J \text{ and approach}\},
\end{align*}
where $|\alpha_{4s1}|$ and $|\alpha_{1s4}|$ are the strengths of the weak $4$--shock and $1$--shock that emanate from the same point
as the strong $1$--shock and $4$--shock, respectively,
and these two weak waves are excluded from $L_4^3$ and $L_1^2$, respectively.
\end{definition}
	
\begin{remark}
$L(J) + C^* \sum_{j=1}^4 |\sigma_i^J - \sigma_{i0}|$ measures the total variation of $\left. U\right|_{J}$,
owing to Lemma $\ref{lemma:bounds}${\rm ;}
each term $L^i(J)$ measures the total variation in region $(i)$,
and each term $|\sigma_i^J - \sigma_{i0}|$ measures the magnitude of jumps of $U$ between the regions separated
by the large shocks.
We also see that $Q(J) \leq L(J)$ for $L(J)$ small enough,
so that $Q(J)$ is equivalent to $F_s(J) + C^* \sum_{j=1}^4 |\sigma_i^J - \sigma_{i0}|$.
\end{remark}
	
\begin{proposition}\label{prop:functional}
Suppose that $I$ and $J$ are two $k$--mesh curves such that $J$ is an immediate successor of $I$.
Suppose that
\begin{align*}
&\big|U_{\Delta x, \theta} |_{I \cap (\Omega^{(1)}_{\Delta x, k-1} \cup \Omega^{(1)}_{\Delta x, k})} - U_b\big| < \epsilon,
			\qquad\,\,\,\,	\big|U_{\Delta x, \theta}|_{I \cap (\Omega^{(2)}_{\Delta x, k-1} \cup \Omega^{(2)}_{\Delta x, k})} - U_{m1}\big| < \epsilon, \\[1mm]
&  \big|U_{\Delta x, \theta}|_{I \cap (\Omega^{(3)}_{\Delta x, k-1} \cup \Omega^{(3)}_{\Delta x, k})} - U_{m2}\big| < \epsilon,
			  \qquad	\big|U_{\Delta x, \theta}|_{I \cap (\Omega^{(4)}_{\Delta x, k-1} \cup \Omega^{(4)}_{\Delta x, k})} - U_a\big| < \epsilon, \\[2mm]
&	|\sigma^I_j - \sigma_{j0}| < \hat \epsilon(\epsilon), \qquad j = 1,2,3,4,
\end{align*}
for some $\hat \epsilon( \epsilon) > 0$ determined in Lemma {\rm \ref{lemma:bounds}}.
Then there exists $\tilde \epsilon > 0$ such that, if $F_s(I) \leq \tilde \epsilon$,
\[
F_s(J) \leq F_s(I),
\]
and hence
\begin{align*}
&\big|U_{\Delta x, \theta}|_{J \cap (\Omega^{(1)}_{\Delta x, k-1} \cup \Omega^{(1)}_{\Delta x, k})} - U_b\big| < \epsilon,
   \quad \,\,\,\,\big|U_{\Delta x, \theta}|_{J \cap (\Omega^{(2)}_{\Delta x, k-1} \cup \Omega^{(2)}_{\Delta x, k})} - U_{m1}\big| < \epsilon, \\[1mm]
&	\big|U_{\Delta x, \theta}|_{J \cap (\Omega^{(3)}_{\Delta x, k-1} \cup \Omega^{(3)}_{\Delta x, k})} - U_{m2}\big| < \epsilon,
   \quad \big|U_{\Delta x, \theta}|_{J \cap (\Omega^{(4)}_{\Delta x, k-1} \cup \Omega^{(4)}_{\Delta x, k})} - U_a\big| < \epsilon, \\[2mm]
&	|\sigma^J_j - \sigma_{j0}| < \hat \epsilon(\epsilon) \qquad \mbox{for $j = 1,2,3,4$}.
\end{align*}
\end{proposition}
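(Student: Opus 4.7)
The plan is to reduce everything to a single diamond: since $J$ is an immediate successor of $I$, the two curves coincide outside one small region in which a single Riemann problem has been resolved by the interaction estimates of Section~4. I will enumerate the diamonds by the type of waves entering them: (a) two incoming weak waves with no strong wave crossing the diamond, (b) weak waves meeting the strong $1$--shock from above or from below, (c) weak waves meeting the combined strong $2$--vortex sheet/$3$--entropy wave from above or from below, and (d) weak waves meeting the strong $4$--shock from above or from below. In each case, the outgoing weak strengths and the perturbation of the strong strength are given explicitly by one of Propositions~4.1--4.6, with an error of the form $O(1)\,\Delta$ where $\Delta$ is a quadratic expression in the incoming strengths that contains at least one product of approaching waves.

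Next I will compute $F_s(J)-F_s(I)$ in each case. Writing the linear part as $L(J)-L(I) = \sum_{i,j} K^*_{ij}(L^j_i(J)-L^j_i(I))$, the contribution from a weak $i$-wave of the same family that simply transmits across a strong wave is balanced by choosing the weights $K^*_{1j},K^*_{2j},K^*_{3j},K^*_{4j}$ in the adjacent regions with a ratio dictated by the transmission coefficient. The crucial terms are the reflected $1$-wave in region~(2) produced at the strong vortex sheet/entropy wave or at the strong $4$--shock (bounded by $|K_{vb1}|<1$ and $|K_{4sb1}|<1$), and the reflected $4$-wave in region~(3) produced at the strong $1$--shock or at the strong vortex sheet/entropy wave (bounded by $|K_{1sa4}|<1$ and $|K_{va4}|<1$). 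Since these four reflection coefficients are strictly less than one, one can fix $K^*_{ij}$ so that the total $L$-contribution of the outgoing cross-family reflection is strictly smaller than that of the incoming weak wave; the same-family ``ghost'' waves $\alpha_{4s1}$ and $\alpha_{1s4}$ emerging together with the strong shocks are absorbed into $F$ via the coefficients $\tilde K_{43}$ and $\tilde K_{12}$. The first term $C^*\sum_{j}|\sigma^J_j-\sigma_{j0}|$ absorbs the linear-in-$\beta$ (or $\alpha$) perturbation of the strong shock strengths coming from Propositions~4.3--4.6 and~4.1--4.2.

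After these linear cancellations, what remains in $L(J)-L(I)$ plus $C^*\sum_j(|\sigma^J_j-\sigma_{j0}|-|\sigma^I_j-\sigma_{j0}|)$ is a term of size $O(1)\,\Delta$. The quadratic functional changes by
\begin{equation*}
Q(J)-Q(I) \leq -\,\Delta_0 + O(1)\,L(I)\,\Delta,
\end{equation*}
where $\Delta_0$ is the sum of the approaching products that disappear after the interaction (so $\Delta_0\gtrsim\Delta$ up to a universal factor in each of the cases above). Choosing $K$ large enough that $K\Delta_0$ dominates the $O(1)\Delta$-error in the linear part, and then shrinking $\tilde\epsilon$ so that $O(1)L(I)\leq \tfrac{1}{2}$, gives $F_s(J)\leq F_s(I)$.

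Once the functional bound is established, I propagate the $\epsilon$-bounds on $J$ by Lemma~5.1: the total variation of $U_{\Delta x,\theta}|_J$ in each region is controlled by the corresponding $L^j(J)\leq F_s(J)\leq F_s(I)\leq \tilde\epsilon$, and the jumps across the strong fronts are bounded by $|\sigma^J_j-\sigma_{j0}|\leq \tilde\epsilon/C^*$; hence each restriction $U_{\Delta x,\theta}|_{J\cap \Omega^{(\ell)}}$ stays within the prescribed $\epsilon$-neighborhood and $|\sigma^J_j-\sigma_{j0}|<\hat\epsilon(\epsilon)$. The main obstacle is the bookkeeping of the weights $K^*_{ij}, \tilde K_{43}, \tilde K_{12}, C^*, K$: they must be chosen in a consistent order (first fix the ratios $K^*_{ij}$ between neighboring regions using the four strict inequalities on the reflection coefficients, then fix $\tilde K_{43},\tilde K_{12}, C^*$, and finally choose $K$ large and $\tilde\epsilon$ small), and one has to verify that the same choice simultaneously handles all six interaction geometries.
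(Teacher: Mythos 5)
Your proposal follows essentially the same route as the paper: a case analysis over the location of the single interaction diamond, the interaction estimates of \S 4 with the four reflection coefficients $|K_{vb1}|,|K_{va4}|,|K_{1sa4}|,|K_{4sb1}|<1$ driving the choice of the weights $K^*_{ij}$, absorption of the ghost waves $\alpha_{4s1},\alpha_{1s4}$ and of the strong-front strength perturbations into $\tilde K_{43},\tilde K_{12},C^*$, and domination of the residual quadratic errors by $KQ$, followed by propagation of the $\epsilon$--bounds via Lemma 5.1. The only slip is in the bookkeeping of where the reflected waves live (the reflected $1$--wave produced at the strong $4$--shock lands in region $(3)$ and the reflected $4$--wave produced at the strong $1$--shock in region $(2)$, not the other way around), which does not affect the structure of the argument that the paper carries out with the explicit constants $M$, $K=16M$, $C^*=1/(128M^2)$, $\tilde\epsilon=1/(1024M^3)$.
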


\begin{proof}
		\begingroup
		\setlength{\abovedisplayskip}{2pt}
		\setlength{\belowdisplayskip}{2pt}
We make $M\ge 2$ larger to ensure that it is bigger than all the $O(1)$--terms in \S \ref{chapter:waves},
and that $M\ge \frac{\Delta y}{\Delta x}$.
With the fixed $M$ from here on, we now define our constants in terms of it.
We set
\[
\tilde \epsilon = \frac{1}{1024M^3}
\]
and
\begin{align*}
&K = 16M, \qquad C^* = \frac{1}{128M^2}, \\
&K^*_{11} = K^*_{21} = K^*_{31} = K^*_{41} = K^*_{14} = K^*_{24} = K^*_{34} = K^*_{44} =1, \\
&K^*_{12} = K^*_{43} = \frac{1}{M}, \quad  K^*_{13} = K^*_{42} = 1+\frac{2}{M}, \\
&\tilde K^*_{42} = \tilde K^*_{13} = \frac{1}{16M}, \quad K^*_{22} = K^*_{32} = K^*_{2,3} = K^*_{33} = \frac{1}{32M}.
\end{align*}
Let $\Lambda$ be the diamond that is formed by $I$ and $J$.
We can assume that $I = I_0 \cup I'$ and $J = J_0 \cup J'$
such that $\partial \Lambda = I' \cup J'$.
We divide our proof into different cases, based on where diamond $\Lambda$ is located.

\smallskip		
\textbf{Case 1: Weak-weak interaction.}
Suppose that $\Lambda$ lies in the interior of a region $(i)$;
see Figure \ref{fig:weakwave}.
Then, by Proposition \ref{prop:weakwaves},
\begin{align*}
L^i(J) - L^i(I) & \leq \sum_{j=1}^4 K^*_{ij} M Q(\Lambda),
\end{align*}
and
\begin{align*}
		Q(J)-  Q(I) & = \Big(Q(I_0) + \sum_{i=1}^4Q(\delta_i,I_0)\Big) - \Big(Q(I_0) + Q(\Lambda) + \sum_{i=1}^4 Q(\alpha_i,I_0) + \sum_{i=1}^4 Q(\beta_i,I_0)\Big) \\
			& \leq \big(4M L(I_0) -1\big) Q(\Lambda) \\
			& \leq -\frac 12 Q(\Lambda),
\end{align*}
when $L(I_0) \leq \tilde \epsilon \leq \frac{1}{8M}$.
Since $K_{ij}^* < 2$ for all $i$ and $j$,
we have
$$
\sum_{j=1}^4 K^*_{ij} M  \leq 8M = \frac{K}{2},
$$
so that $F_s(J) \leq F_S(I)$.

\begin{figure}[h!]
\includegraphics[scale=0.7]{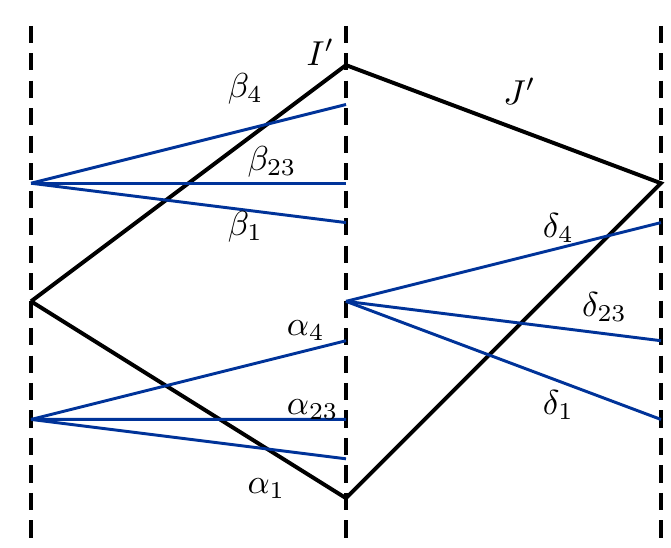}
\center                    ing
\caption{Case 1: Weak wave interaction}
\label{fig:weakwave}
\end{figure}
		
\textbf{Case 2: Weak waves interact with the strong vortex sheet/entropy wave from below.}
Suppose that the approximate strong vortex sheet/entropy wave enters $\Lambda$ from above; see Figure \ref{fig:belowvortex}.
\begin{figure}[h!]
			\includegraphics[scale=0.7]{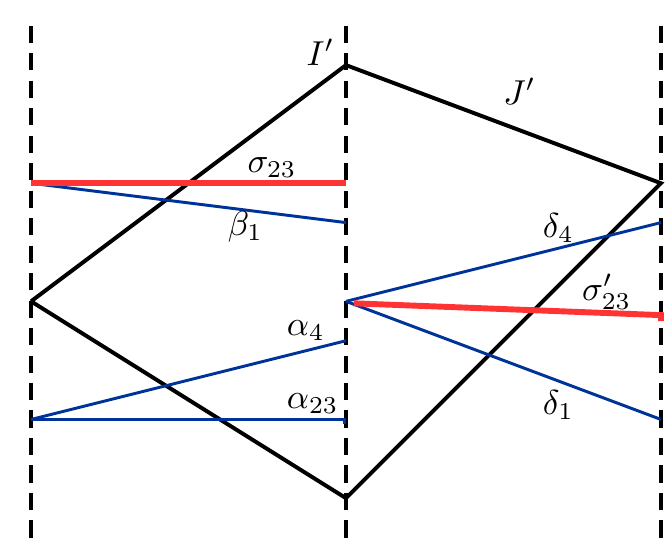}
			\includegraphics[scale=0.7]{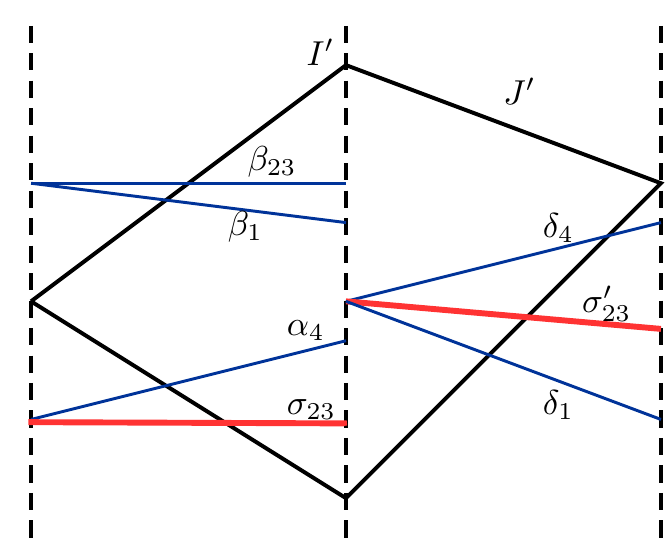}
			\centering
			\caption{Cases 2 \& 3: Weak waves interact with the strong vortex sheet/entropy wave from below and above}
			\label{fig:belowvortex}
			\label{fig:abovevortex}
\end{figure}
We have
\begin{align*}
			L^3_4(J) - L^3_4(I) &= |\delta_4| \leq M |\alpha_4| + M|\beta_1|(  |\alpha_2| + |\alpha_3|), \\
			L^2_1(J) - L_1^2(I) &= |\delta_1| - |\beta_1| \leq  |\alpha_4| + M|\beta_1|( +|\alpha_2| + |\alpha_3|), \\
			L^2_i(J) - L^2_i(I) &= -|\alpha_i| \qquad \mbox{for}\,\, i= 2,3,
\end{align*}
\begin{align*}
			Q(J) - Q(I) &= Q(I_0) + Q(\delta_4,I_0) + Q(\delta_1,I_0)\\
            &\quad - \big\{ Q(I_0) + |\beta_1|(|\alpha_2| + |\alpha_3|)+ Q(\beta_1,I_0) + Q(\beta_4,I_0)    \\
            &\quad\quad\,\,\,  + Q(\alpha_1,I_0) + Q(\alpha_2,I_0) + Q(\alpha_3,I_0) + Q(\alpha_4,I_0) \big\}\\
			& \leq (1+M)|\alpha_4| L(I_0) + \big(M L(I_0) -1\big) |\beta_1|\big( |\alpha_1| + |\alpha_2| + |\alpha_3|\big)\\
			& \leq 2M |\alpha_4| L(I_0) - \frac{1}{2} |\beta_1|\big( |\alpha_2| + |\alpha_3|\big),
\end{align*}
and 	
\begin{align*}
			|\sigma_i^J - \sigma_i^I| & \leq M|\alpha_4| + |\alpha_i| + M|\beta_1|\big(  |\alpha_2|+|\alpha_3|\big)
\qquad \mbox{for $i=2,3$}.
\end{align*}
Then we obtain
\begin{align*}
 F_s(J) - F_s(I)
 \leq\, & \big(M K_{43}^* + K_{12}^*  - K^*_{42} + 2MK L(I_0) + M  C^*\big) |\alpha_4|  \\
 & + \Big(M K_{43}^* + K_{12}^* + 2M C^* - \frac{K}{2}\Big)|\beta_1|\big(|\alpha_2| + |\alpha_3|\big)  \\
& + (C^* - K^*_{22} ) |\alpha_2| + (C^* - K^*_{32}) |\alpha_3| \\
 \leq &\, \Big(1 + \frac{2}{M} - (1+\frac{2}{M}) + \frac{1}{32M} + \frac{1}{128M}\Big) |\alpha_4| \\
 & + \sum_{i=1}^3 \Big(1 + \frac{1}{M} - 8M\Big)|\beta_1|\big( |\alpha_2| + |\alpha_3|\big)
 \leq 0.
\end{align*}
		
\textbf{Case 3: Weak waves interact with the strong vortex sheet/entropy wave from above.}
Suppose that the approximate vortex sheet/entropy wave enters $\Lambda$ from below;
see Figure \ref{fig:abovevortex}.
This case follows by symmetry from the above case, owing to the symmetry between
Propositions \ref{prop:belowvortex} and \ref{prop:abovevortex},
and the symmetry between the coefficients.

\smallskip		
\textbf{Case 4: Weak waves interact with the strong $1$--shock from above.}
Suppose that the strong $1$--shock enters $\Lambda$ from below; see Figure \ref{fig:above1shock}.
By Proposition \ref{prop:strong1aboveest}, we have
		\begin{figure}[h!]
			\includegraphics[scale=0.7]{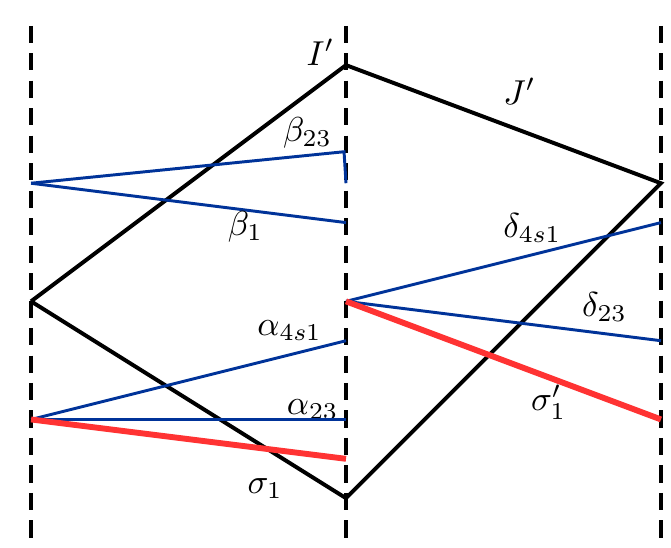}
			\includegraphics[scale=0.7]{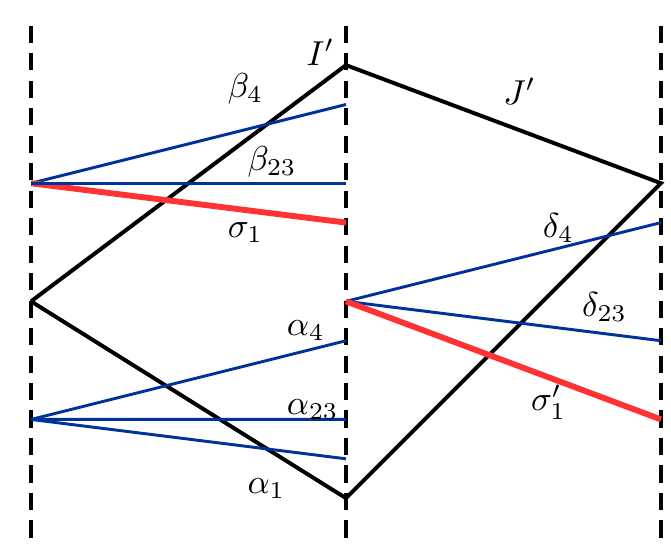}
			\centering
			\caption{Cases 4 \& 5: Weak waves interact with the strong $1$--shock from above and below}
			\label{fig:above1shock}
			\label{fig:below1shock}
		\end{figure}
\begin{align*}
			L_i^2(J) - L_i^2(I) &=|\delta_i| - |\beta_i| - |\alpha_i|
			\leq M |\beta_1| + M |\alpha_4|( |\beta_1| + |\beta_2| + |\beta_3| ) \qquad \mbox{for $i = 2,3$},\\
			L_j^2(J) - L_j^2(I)& = - |\beta_j| \qquad \mbox{for $j =1,4$},
\end{align*}
		and
		\begin{align*}
			Q(J) - Q(I) =&\, Q(I_0) + Q(\delta_2,I_0) + Q(\delta_3,I_0) + Q(\delta_4,I_0) \\
			& - \big\{ Q(I_0) + Q(\alpha_2,I_0) + Q(\alpha_3,I_0) + Q(\alpha_4,I_0)  \\
			& \,\,\quad+ Q(\beta_1,I_0) + Q(\beta_2,I_0) + Q(\beta_3,I_0) + Q(\beta_4,I_0)  \\
			&  \,\,\quad +|\alpha_{4s1}|( |\beta_1| + |\beta_2| + |\beta_3|  ) + |\alpha_3|(|\beta_1| + |\beta_2|) + |\alpha_2||\beta_1| \big\} \\[1mm]
			 \leq&\,  M L(I_0) |\beta_1| + (M L(I_0) - 1) |\alpha_4| ( |\beta_1| + |\beta_2| + |\beta_3|) \\ & -  |\alpha_3|(|\beta_1| + |\beta_2|) - |\alpha_2||\beta_1|  \\[1mm]
			 \leq&\, M L(I_0) |\beta_1|
           - \frac{1}{2} \big(\alpha_4| ( |\beta_1| + |\beta_2| + |\beta_3|) + |\alpha_3|(|\beta_1| + |\beta_2|) + |\alpha_2| |\beta_1| \big),
		\end{align*}

		\begin{align*}
			|\delta_{4s1}| - |\alpha_{4s1}| & \leq M|\beta_1| + M|\alpha_4|( |\beta_1| + |\beta_2| + \beta_3| ),
		\end{align*}
		and
		\begin{align*}
			|\sigma^J_1 - \sigma^I_1| & \leq M |\beta_1| + M |\alpha_4|( |\beta_1| + |\beta_2| + |\beta_3| ) .
		\end{align*}				

Therefore, we obtain
		\begin{align*}
			 F_s(J) - F_s(I)
            \leq &\, \big(M(K_{22}^* + K_{32}^* + \tilde K_{42}^*) + K M L(I_0) + C^*M - K_{42}^*\big) |\beta_1| \\
			  & + \big( - \frac{K}{2} + M(K_{22}^* + K_{32}^* + \tilde K_{42}^*) + C^*M\big) |\alpha_4|( |\beta_1| + |\beta_2| + |\beta_3|)  \\
			\leq &\, 0.
		\end{align*}

\smallskip
\textbf{Case 5: Weak waves interact with the strong $1$--shock from below.}
 Suppose that the strong $1$--shock enters $\Lambda$ from above; see Figure \ref{fig:below1shock}.
 By Proposition \ref{prop:strong1belowest}, we have

\begin{align*}
&L_i^2(J) - L_i^2(I)  \leq M\big(|\alpha_1| + |\alpha_2| + |\alpha_3| + |\alpha_4\big) \qquad \mbox{for $i = 2,3$},\\
&L_j^1(J) - L_j^1(I)  = - |\alpha_j| \qquad \mbox{for $j=1,2,3,4$},\\[3mm]
&|\delta_4| - |\beta_4|  \leq M(|\alpha_1| + |\alpha_2| + |\alpha_3| + |\alpha_4), \\
&|\sigma_1^J - \sigma_1^I|  \leq M(|\alpha_1| + |\alpha_2| + |\alpha_3| + |\alpha_4),
\end{align*}
\begin{align*}
Q(J) - Q(I) =&\, Q(I_0) + Q(\delta_1,I_0) + Q(\delta_2,I_0) + Q(\delta_3,I_0) + Q(\delta_4,I_0) \\
			&\, - \big\{ Q(\alpha_1,I_0) + Q(\alpha_2,I_0) + Q(\alpha_3,I_0) + Q(\alpha_4,I_0) \\
			&\, \quad\,\, + Q(\beta_2,I_0) + Q(\beta_3,I_0) + Q(\beta_4,I_0) \\
			&\,  \quad\,\, + |\alpha_4|( |\beta_2| + |\beta_3| + \mathbf{1}_{ ([0,\infty)^2)^c}(\alpha_4,\beta_4) |\beta_4|) + |\alpha_3||\beta_2| \big\} \\
\leq & \, M L(I_0)(|\alpha_1| + |\alpha_2| + |\alpha_3| + |\alpha_4|) \\
     &\, - \big(|\alpha_4|( |\beta_2| + |\beta_3| + \mathbf{1}_{ ([0,\infty)^2)^c}(\alpha_4,\beta_4) |\beta_4|) + |\alpha_3||\beta_2|\big).
		\end{align*}
Then
\begin{align*}
 F_s(J) - F_s(I) \leq &\, \sum_{j=1}^4 \brac*{ M(K^*_{22} + K^*_{32} + \tilde K^*_{42}) + K ML(I_0) + M C^* - K_{j1} }|\alpha_j| \\
			 \leq & \,  - \frac{1}{32}\sum_{j=1}^4 |\alpha_j|  \leq  0.
\end{align*}

\textbf{Case 6: Weak waves interact with the strong $4$--shock from below.}
Suppose that the strong $4$--shock enters $\Lambda$ from above; see Figure \ref{fig:below4shock}.
By symmetry from Case 4, we conclude that $F_s(J) \leq F_s(I)$, due to the symmetry
between Propositions \ref{prop:strong1aboveest} and \ref{prop:strong4belowest}.
		\begin{figure}[h!]
			\includegraphics[scale=0.7]{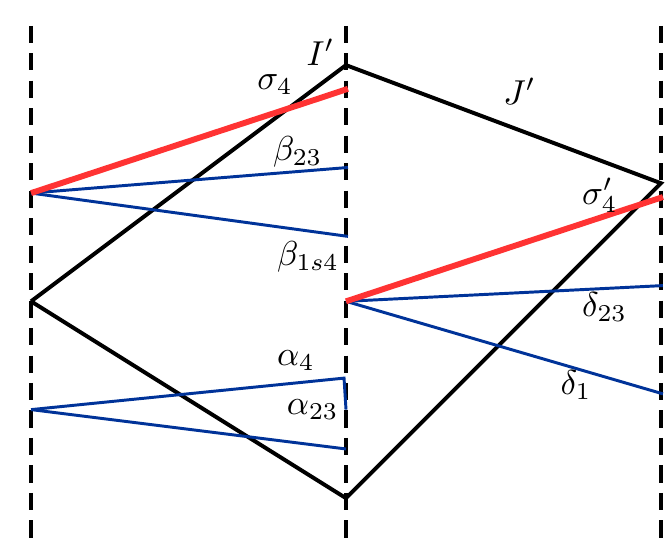}
			\includegraphics[scale=0.7]{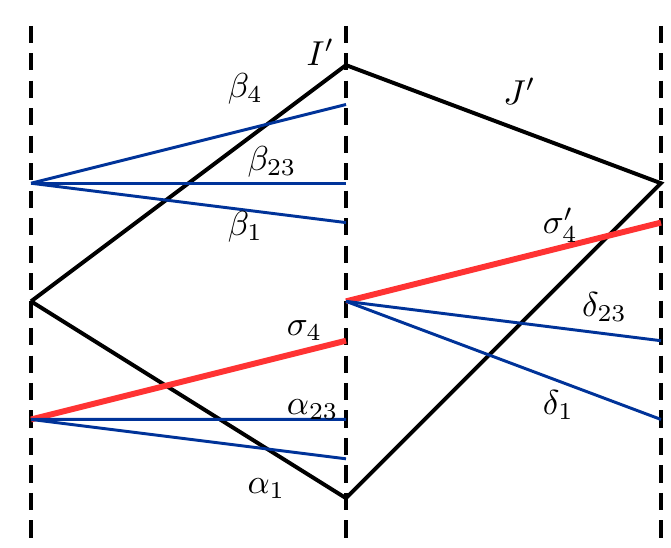}
			\centering
			\caption{Cases 6 \& 7: Weak waves interact with the strong $4$--shock from below and above.}
			\label{fig:below4shock}
			\label{fig:above4shock}
		\end{figure}

\textbf{Case 7: Weak waves interact with the strong $4$--shock from above.}
Suppose that the strong $4$--shock enters $\Lambda$ from below; see Figure \ref{fig:above4shock}.
By symmetry from Case 5, we obtain that  $F_s(J) \leq F_s(I)$, owing to the symmetry between
Propositions \ref{prop:strong1belowest} and \ref{prop:strong4aboveest}.
\endgroup
\end{proof}

Let $I_k$ be the $k$--mesh curve lying in $\{ (j-1)\Delta x  \leq x \leq j \Delta x \}$.
From Proposition \ref{prop:functional}, we obtain the following theorem for any $k \geq 1$:
	
\begin{theorem}\label{thm:decreasingtv}
Let $\tilde \epsilon, \hat \epsilon(\epsilon), K$, and $C^*$ be the constants specified
in Proposition {\rm \ref{prop:functional}}.
If the induction hypotheses $C_1(k-1)- C_4(k-1)$ hold and $F_s(I_{k-1}) \leq \tilde \epsilon$,
then
\begin{equation}\label{eq:solnbounds}
\begin{aligned}
&\big|U_{\Delta x, \theta} |_{\Omega^{(1)}_{\Delta x, k}} - U_b\big| < \epsilon,
			\qquad\,\,\,\,	\big|U_{\Delta x, \theta} |_{\Omega^{(2)}_{\Delta x, k}} - U_{m1}\big| < \epsilon,  \\
			&	\big|U_{\Delta x, \theta} |_{\Omega^{(3)}_{\Delta x, k}} - U_{m2}\big| < \epsilon,
			\qquad	\big|U_{\Delta x, \theta}\mid_{\Omega^{(4)}_{\Delta x, k}} - U_a\big| < \epsilon, \nonumber \\
			&	|\sigma^{(k)}_j - \sigma_{j0}| < \hat \epsilon(\epsilon)\qquad \mbox{for $j = 1,2,3,4$}, \nonumber
\end{aligned}
\end{equation}
		and
\begin{equation}
 F_s(I_k) \leq F_s(I_{k-1}).
\end{equation}
\end{theorem}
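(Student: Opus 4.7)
The plan is to reduce Theorem \ref{thm:decreasingtv} to an iterated application of Proposition \ref{prop:functional}, which already carries out the hard per-diamond work. First, I would build a finite chain of $k$-mesh curves $I_{k-1} = J_0 < J_1 < \cdots < J_N = I_k$ in which each $J_{m+1}$ is an immediate successor of $J_m$. Such a chain exists by the standard combinatorial structure of Glimm's scheme: one advances $I_{k-1}$ (which lies in $\{(k-2)\Delta x \le x \le k\Delta x\}$) to $I_k$ (which lies in $\{(k-1)\Delta x \le x \le (k+1)\Delta x\}$) one mesh point at a time, each elementary flip absorbing exactly one diamond between the two curves. The number of flipped diamonds is finite because only finitely many mesh points change between any two comparable $k$-mesh curves in a fixed vertical strip.

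Next, I would run an inner induction on $m$ to verify that the hypotheses of Proposition \ref{prop:functional} hold at each $J_m$. The base case $m=0$ uses the standing hypotheses: $C_3(k-1)$ gives the $\epsilon$-bounds on $U_{\Delta x,\theta}$ in each of the four regions along $J_0 = I_{k-1}$, and the induction step hypothesis $|\sigma_j^{(k-1)} - \sigma_{j0}| < \hat\epsilon(\epsilon)$ handles the strong-wave strengths. Given the hypotheses at $J_{m-1}$, Proposition \ref{prop:functional} applied to the single diamond separating $J_{m-1}$ and $J_m$ delivers both the $\epsilon$-bounds on $U_{\Delta x,\theta}|_{J_m}$ and the strength bounds $|\sigma_j^{J_m} - \sigma_{j0}| < \hat\epsilon(\epsilon)$, together with $F_s(J_m) \le F_s(J_{m-1})$. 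Telescoping gives $F_s(J_m) \le F_s(J_0) = F_s(I_{k-1}) \le \tilde\epsilon$, so the smallness hypothesis $F_s \le \tilde\epsilon$ needed for the next application of Proposition \ref{prop:functional} is automatic.

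Taking $m = N$ yields $F_s(I_k) \le F_s(I_{k-1})$, which is the second conclusion. For the first conclusion, I would promote the pointwise control on the mesh curve $I_k$ to control on the open region $\Omega^{(j)}_{\Delta x, k}$. Each point in $\Omega_{\Delta x, k}$ lies in a Riemann fan centred at some mesh point $P_{k-1,n}$, whose two initial states at $x = (k-1)\Delta x$ lie in the designated $\epsilon$-neighbourhoods by the mesh-curve bounds just proved. Lemma \ref{lemma:bounds} then confines the entire self-similar Riemann solution to that neighbourhood, after possibly shrinking $\epsilon$ once at the outset. The region labels are preserved because the CFL-type choice $\sigma_1 < \tfrac{\Delta y}{\Delta x} a < \lambda_2(U) < \tfrac{\Delta y}{\Delta x} b < \sigma_4$ forces the emerging strong fronts to stay in prescribed sub-slabs of $\{k\Delta x\} \times \mathbb{R}$, so that $C_1(k)$ and $C_2(k)$ follow from $C_1(k-1)$ and $C_2(k-1)$.

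The main obstacle is not analytic but combinatorial and organizational: one must order the sequence of immediate successors so that the inductive hypotheses of Proposition \ref{prop:functional} can actually be checked at every intermediate $J_m$, paying particular attention to the diamonds containing the three strong fronts, which must be processed one at a time and only after the surrounding weak-wave diamonds have been absorbed. Concretely, one has to verify that the four regions $\Omega^{(1)}_{\Delta x,k}, \ldots, \Omega^{(4)}_{\Delta x,k}$ remain well-defined throughout the process, i.e.\ that no two strong fronts enter the same diamond. This is where the explicit separation $-1 < a < 0 < b < 1$ and the placement of $\theta_k$ come in, and is also where $C_2(k-1)$ is genuinely used to rule out collisions inside a single slab.
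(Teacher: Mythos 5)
Your proposal is correct and follows essentially the same route as the paper, which derives Theorem \ref{thm:decreasingtv} directly from Proposition \ref{prop:functional} by the standard Glimm iteration over immediate successors; in fact you supply considerably more detail (the chain of mesh curves, the telescoping of $F_s$, and the promotion of mesh-curve bounds to the regions $\Omega^{(j)}_{\Delta x,k}$) than the paper, which states the implication without further argument.
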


Moreover, we obtain the following theorem by the construction of our approximate solutions:
	
\begin{theorem} \label{thm:vortexandtv}
There exists $\epsilon > 0$ such that, if
\[
\mathrm{TV}\brac*{U_0(\cdot) - \overline U(0,\cdot) } < \epsilon,
\]
then, for any $\theta \in \prod_{k=0}^\infty (-1,1)$ and every $\Delta x > 0$,
the modified Glimm scheme defines a family of strong approximate fronts
$\chi_{j,\Delta x, \theta}$, $j=1, 2,3, 4$,
in $\Omega_{\Delta x, \theta}$ which satisfy $C_1(k-1) - C_4(k-1)$ and \eqref{eq:solnbounds}.
In addition,
\[
\abs*{ \chi_{j,\Delta x, \theta}(x+h) - \chi_{j, \Delta x, \theta}(x)} \leq \brac*{|\sigma_{j0}| + 2M}|h| + 2 \Delta y
\]
for any $x \geq 0$ and $h>0$.
\end{theorem}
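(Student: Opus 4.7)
The plan is induction on the mesh layer index $k$, propagating the control of $F_s$ and the perturbative state bounds via Theorem \ref{thm:decreasingtv}. The Lipschitz bound on the strong fronts is then a consequence of the slope control coming from $|\sigma_j^{(k)} - \sigma_{j0}| < \hat\epsilon(\epsilon)$, combined with the CFL-type choice of $\Delta y/\Delta x$ and the bounded transversal jumps induced at each mesh line by the random sampling.

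For the base step, I would choose $\epsilon$ so that, after the piecewise-constant sampling of $U_0$ on $|y| > 4\Delta y$ and the exact insertion of the background strong fronts in the seam $|y| \leq 4\Delta y$, the functional $F_s$ of the zeroth mesh curve is bounded by the $\tilde\epsilon$ of Proposition \ref{prop:functional}. Since total variation is not increased under piecewise-constant sampling and the strong jumps inside the seam are inserted with exactly the background strengths $\sigma_{j0}$, Lemma \ref{lemma:bounds} converts spatial jumps into wave strengths and yields $L(I_0) + C^*\sum_j |\sigma_j^{I_0} - \sigma_{j0}| \leq C_0\,\epsilon$, while $Q(I_0) \leq L(I_0)^2 = O(\epsilon^2)$. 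Hence $F_s(I_0) < \tilde\epsilon$ for $\epsilon$ small, and the construction of $U_{0,\Delta x,\theta}$ immediately delivers $C_1(0)$--$C_3(0)$.

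For the inductive step, assume the hypotheses hold at level $k-1$. Any $k$-mesh curve can be reached from $I_{k-1}$ by a finite sequence of immediate successors, each enclosing exactly one diamond $\Lambda$. Applying Proposition \ref{prop:functional} iteratively along this chain gives $F_s(I_k) \leq F_s(I_{k-1}) \leq \tilde\epsilon$ together with the pointwise state bounds \eqref{eq:solnbounds}, which in turn yield $C_1(k)$ and $C_3(k)$. Condition $C_2(k)$ is enforced by the separation analysis recorded immediately before the theorem statement: the constants $a, b$ and the position of $\theta_k$ relative to them force all three strong fronts at level $k$ to shift uniformly further down (or uniformly further up) than at level $k-1$, preserving their ordering, while the CFL-type bound $\Delta y/\Delta x > \max_i |\sigma_i|, \max_j |\lambda_j|$ guarantees that no two strong fronts can meet inside a single strip.

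The Lipschitz estimate follows from two observations. First, within strip $\Omega_{\Delta x, k}$ the front $\chi_j^{(k)}$ is a straight line of slope $\sigma_j^{(k)}$ with $|\sigma_j^{(k)} - \sigma_{j0}| < \hat\epsilon(\epsilon) \leq M$. Second, at each mesh line $x = k\Delta x$ the starting position of $\chi_j^{(k)}$ differs from the endpoint of $\chi_j^{(k-1)}$ by at most a bounded multiple of the mesh spacing $\Delta y$, because the sampled initial data places the new strong jump at the mesh boundary nearest the arriving front. As the interval $[x, x+h]$ meets at most $h/\Delta x + 1$ mesh layers, the cumulative transversal jump is bounded by $2(\Delta y/\Delta x)\, h + 2\Delta y \leq 2M\, h + 2\Delta y$ using $M \geq \Delta y/\Delta x$; adding the slope contribution $(|\sigma_{j0}| + \hat\epsilon)\, h$ and absorbing $\hat\epsilon$ into $M$ produces the stated bound. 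The main obstacle is verifying that the Riemann problems encountered in the inductive step always admit the expected four-wave solution structure, which requires the strong fronts to stay non-crossing and the four states on each side to remain in the respective small neighborhoods $O_\epsilon(U_a), O_\epsilon(U_{m1}), O_\epsilon(U_{m2}), O_\epsilon(U_b)$; the tight coupling between the choice of $\tilde\epsilon$, $\hat\epsilon$, and $M$ fixed at the beginning of the proof of Proposition \ref{prop:functional} is precisely what makes this work out, and once this is in place, the rest of the argument reduces to bookkeeping along the chain of immediate successors.
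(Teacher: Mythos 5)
Your proposal is correct and follows essentially the same route the paper intends: the paper gives no written proof, stating only that the theorem follows ``by the construction of our approximate solutions,'' which is exactly your induction via Proposition \ref{prop:functional} and Theorem \ref{thm:decreasingtv} for $C_1$--$C_3$ and \eqref{eq:solnbounds}, the $a,b$ separation argument recorded just before the theorem for $C_2$, and the slope-plus-sampling-jump bookkeeping for the Lipschitz bound. The only cosmetic point is that your final estimate reads $(|\sigma_{j0}|+\hat\epsilon+2M)|h|+2\Delta y$ before absorbing $\hat\epsilon$; since $M\ge 2$ and $\hat\epsilon$ is small this is harmless, but it is worth stating that the absorption uses $\hat\epsilon\le M$ rather than redefining $M$.
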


\subsection{Estimates on the Approximate Strong Fronts}
Let
\begin{align}
& \sigma_{j,\Delta x, \theta}(x) = \sigma^{(k)}_j \qquad \text{for $x \in (k \Delta x, (k+1)\Delta x]$ and $j = 1,2,3,4$},\\
& s_{i,\Delta x, \theta}(x) = \sigma_{i,\Delta x, \theta}(x) \qquad\mbox{for $i =1,4$},\\
& s_{2,3,\Delta x, \theta} = \left. \frac{v_{\Delta x, \theta}}{u_{\Delta x, \theta}}\right|_{\{y = \chi^{(k)}_{2,3}\}}
  \qquad \text{for $x \in (k \Delta x, (k+1)\Delta x]$}.
\end{align}
Then, by the estimates in Proposition \ref{prop:functional}, we have

\begin{proposition}\label{prop:sigmatv}
There exists $\tilde M$, independent of $\Delta x, \theta$, and $U_{\Delta x, \theta}$, such that
\[
TV \{ \sigma_{j, \Delta x, \theta} \mid [0,\infty) \} = \sum_{k=0}^\infty |\sigma_{(k+1)} - \sigma_{(k)}|  \leq \tilde M.
\]
\end{proposition}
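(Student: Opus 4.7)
The plan is to show that each jump in the strong wave strength $\sigma_j^{(k)}$ is paid for by a strict decrease in the Glimm-type functional $F_s$, and then to telescope using the monotonicity from Theorem \ref{thm:decreasingtv}.

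First, between adjacent strips $\Omega_{\Delta x, k}$ and $\Omega_{\Delta x, k+1}$ the $j$-th strong wave strength can change only through the single Riemann interaction that takes place in the diamond containing the strong $j$-front. Applying the appropriate wave-interaction estimate from \S \ref{chapter:waves}---namely Proposition \ref{prop:belowvortex} or \ref{prop:abovevortex} when $j \in \{2,3\}$, and Propositions \ref{prop:strong1aboveest}--\ref{prop:strong4belowest} when $j \in \{1,4\}$---yields
\[
|\sigma_j^{(k+1)} - \sigma_j^{(k)}| \leq M \cdot E_k,
\]
where $E_k$ denotes a linear combination (plus quadratic cross terms) of the weak-wave strengths entering that diamond from both sides.

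Next, I revisit the case-by-case estimates in the proof of Proposition \ref{prop:functional}. With the specific choices of $K$, $C^*$, and $K^*_{ij}$ fixed there, each displayed inequality $F_s(J) - F_s(I) \leq (\cdots)$ in the weak-strong cases (Cases 2--7) carries strictly negative coefficients in front of exactly the same weak-wave quantities that appear in $E_k$, and these coefficients are bounded below in absolute value by a positive universal constant $c$. Therefore, each weak-strong interaction produces a decrease
\[
F_s(I_k) - F_s(I_{k+1}) \geq c \cdot E_k,
\]
and combining this with the first step gives
\[
|\sigma_j^{(k+1)} - \sigma_j^{(k)}| \leq \frac{M}{c}\,\bigl(F_s(I_k) - F_s(I_{k+1})\bigr).
\]
Summing over $k \geq 0$, telescoping, and using $F_s \geq 0$ together with the monotonicity $F_s(I_{k+1}) \leq F_s(I_k)$ from Theorem \ref{thm:decreasingtv}, I conclude
\[
\mathrm{TV}\{\sigma_{j,\Delta x, \theta}\mid [0,\infty)\} \leq \frac{M}{c}\,F_s(I_0).
\]
Since $F_s(I_0)$ is controlled by $\mathrm{TV}\{U_0(\cdot) - \overline U(0,\cdot)\}$ (plus the fixed initial strengths of the background strong waves), which is less than $\epsilon$ by hypothesis, setting $\tilde M := (M/c)\,F_s(I_0)$ produces a constant independent of $\Delta x$, $\theta$, and $U_{\Delta x, \theta}$.

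The main obstacle lies in the second step: one must verify that, in every weak-strong case of Proposition \ref{prop:functional}, the strictly negative coefficients in the decrease of $F_s$ dominate \emph{every} weak-wave quantity that bounds $|\sigma_j^{(k+1)} - \sigma_j^{(k)}|$, including the quadratic cross terms such as $|\beta_1|(|\alpha_2| + |\alpha_3|)$. These cross terms are absorbed by the $-K/2$ contribution arising from $Q(J) - Q(I)$, and the calibration of constants already performed in Proposition \ref{prop:functional} guarantees that every relevant coefficient is uniformly bounded away from zero on the negative side; extracting an explicit lower bound $c > 0$ is then a bookkeeping exercise within that proof.
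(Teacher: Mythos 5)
Your proposal is correct and follows essentially the same route as the paper: the paper likewise introduces, for each interaction diamond, a quantity $E_{\Delta x,\theta}(\Lambda)$ built from the incoming weak-wave strengths in Cases 2--7, bounds it by a fixed multiple of the local decrease $F_s(I)-F_s(J)$ using the calibrated constants of Proposition \ref{prop:functional}, telescopes to get $\sum_\Lambda E_{\Delta x,\theta}(\Lambda)\leq \tfrac{1}{\tilde\epsilon}F_s(0)$, and then bounds $\sum_k|\sigma_j^{(k+1)}-\sigma_j^{(k)}|$ by $M\sum_\Lambda E_{\Delta x,\theta}(\Lambda)$, absorbing the quadratic cross terms exactly as you describe. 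The only cosmetic difference is that the paper names the universal lower bound on the coefficients as $\tilde\epsilon$ rather than extracting a separate constant $c$.
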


\begin{proof}
We follow the proof in \cite[pp. 290]{shockwedges}. For any $k \geq 1$, and the interaction diamond $\Lambda \subset \{ (k-1)\Delta x \leq x \leq (k+1) \Delta x \}$,
define 	
	\begin{equation}
		E_{\Delta x, \theta}(\Lambda) = \begin{cases}
			0 & \text{Case } 1, \\
			|\alpha_2| + |\alpha_3| + |\alpha_4| & \text{Case } 2, \\
			|\beta_1|+ |\beta_2| + |\beta_3|  & \text{Case } 3, \\
			|\beta_1| & \text{Case } 4, \\
			|\alpha_1| + |\alpha_2| + |\alpha_3| + |\alpha_4| &\text{Case } 5, \\
			|\alpha_4| & \text{Case } 6, \\
			|\beta_1| + |\beta_2| + |\beta_3| +|\beta_4| & \text{Case } 7.
		\end{cases}
	\end{equation}
We see  from Proposition \ref{prop:functional} that
\[
\sum_{\Lambda} E_{\Delta x, \theta} \leq \sum_{\Lambda} \frac{1}{\tilde \epsilon}  \big(F_s(I) - F_s(J)\big) \leq \frac{1}{\tilde \epsilon}F_s(0) = M_1.
\]
Since the quadratic terms from the interacting waves can always be dominated by $F_S(J)$, as shown in \S \ref{chapter:waves},
we have
\[
\sum_{k=0}^\infty |\sigma^{(k+1)}_j - \sigma^{(k)}_j| \leq M \sum_{\Lambda}  E_{\Delta x, \theta}(\Lambda)  \leq M_1 := \tilde M.
\]
\end{proof}

Therefore, we also conclude the following:

\begin{proposition}\label{prop:tvspeeds}
There exists $\tilde{\tilde{M}}$, independent of $\Delta x,  \theta$, and $U_{\Delta x, \theta}$, such that, for $j=1,2,3,4$,
\[
\mathrm{TV} \{ s_{j,\Delta x, \theta}(\cdot)\, \mid \,  [0,\infty) \} \leq \tilde{\tilde{M}}.
\]
\end{proposition}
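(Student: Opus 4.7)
The plan is to split into two cases, corresponding to the strong shocks ($j=1,4$) and the combined vortex sheet/entropy wave ($j=2,3$), and handle them separately.

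For $j=1,4$, the bound is immediate. By the very definition of $s_{j,\Delta x,\theta}$, one has $s_{j,\Delta x, \theta}(x) = \sigma_{j,\Delta x, \theta}(x)$, so Proposition \ref{prop:sigmatv} gives the required TV estimate with any $\tilde{\tilde M} \geq \tilde M$.

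For $j=2,3$, the slope $s_{2,3,\Delta x, \theta}$ equals $v/u$ on the vortex sheet/entropy wave. Because this ratio is continuous across the wave and the adjacent constant states lie in $O_\epsilon(U_{m1})$ or $O_\epsilon(U_{m2})$ (by condition $C_3$), the horizontal velocity $u$ is uniformly bounded away from zero in the relevant region, so the map $U \mapsto v/u$ is Lipschitz continuous with a uniform constant $C_0$ depending only on the background data and $\epsilon$. Denoting by $U^{(k)}_-$ the constant state just below $\chi_{2,3}^{(k)}$ in strip $k$, we then obtain $|s_{2,3}^{(k+1)} - s_{2,3}^{(k)}| \leq C_0 |U^{(k+1)}_- - U^{(k)}_-|$, which reduces matters to controlling the TV of $U^{(k)}_-$ in $k$.

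Next, I would observe that $U^{(k)}_-$ changes from one strip to the next only when a weak wave meets the strong vortex sheet/entropy wave in the diamond at step $k$, i.e.\ Case 2 or Case 3 of the proof of Proposition \ref{prop:functional}; in all other diamonds the state on either side of $\chi_{2,3}$ is carried across unchanged (the random sampling merely reselects a mesh point lying in the same region of a local Riemann fan). Within any such Case 2 or Case 3 diamond $\Lambda$, the interaction estimates of Propositions \ref{prop:belowvortex} and \ref{prop:abovevortex}, together with Lemma \ref{lemma:bounds} to convert wave strengths into state-space differences, yield
\[
|U^{(k+1)}_- - U^{(k)}_-| \leq C_1\, E_{\Delta x,\theta}(\Lambda),
\]
where $E_{\Delta x,\theta}(\Lambda)$ is the weak-wave quantity introduced in the proof of Proposition \ref{prop:sigmatv}. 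Summing over all diamonds and invoking the key bound $\sum_\Lambda E_{\Delta x,\theta}(\Lambda) \leq M_1$ already proved there, we obtain
\[
\mathrm{TV}\{s_{2,3,\Delta x,\theta}\mid [0,\infty)\} \leq C_0 C_1 \sum_\Lambda E_{\Delta x,\theta}(\Lambda) \leq C_0 C_1 M_1,
\]
and we set $\tilde{\tilde M} := \max\{\tilde M, C_0 C_1 M_1\}$.

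The main obstacle is verifying that the change of the adjacent state in each Case 2 / Case 3 diamond is controlled by the same quantity $E_{\Delta x,\theta}(\Lambda)$ appearing in the proof of Proposition \ref{prop:sigmatv}, and not by some larger expression. This amounts to tracing how the updated strengths $\delta_1, \sigma'_2, \sigma'_3, \delta_4$ of Proposition \ref{prop:belowvortex} (and its symmetric counterpart Proposition \ref{prop:abovevortex}) translate into a state-space bound via Lemma \ref{lemma:bounds}, while absorbing the quadratic $O(\Delta')$ residuals into the linear $O(|\alpha_i|+|\beta_j|)$ terms using the smallness of $F_s$ guaranteed by Theorem \ref{thm:decreasingtv}.
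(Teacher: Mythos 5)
Your proposal is correct and follows essentially the same route as the paper: the case $j=1,4$ is read off directly from Proposition \ref{prop:sigmatv}, and for $j=2,3$ the map $U\mapsto v/u$ is Lipschitz on the shrunken neighborhoods of $U_{m1}$, $U_{m2}$ (the paper makes the constant explicit as $\tfrac{4}{u_{m2}^2}(2u_{m2}+1)$), reducing the claim to a uniform bound on the variation of the states adjacent to the front. The only difference is one of bookkeeping: you control that variation by summing $E_{\Delta x,\theta}(\Lambda)$ over Case~2/3 diamonds as in the proof of Proposition \ref{prop:sigmatv}, whereas the paper simply invokes $\mathrm{TV}(U_{\Delta x,\theta})\le C\,\mathrm{TV}(U_0)$; both rest on the same Glimm-functional estimates.
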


\begin{proof}
For $j=1,4$, this follows immediately from Proposition \ref{prop:sigmatv}.
For $j=2,3$, observe that, if we make $O_{\epsilon}(U_{m1})$ small enough so that $\frac{u_{m1}}{2} < u < 2 u_{m1}$ and $-1 < v < 1$
for any $U= (u,v,p,\rho) \in O_\epsilon(U_{m1})$,
we have
\[
\mathrm{TV} \{ s_{j,\Delta x, \theta}(\cdot)  \,\mid \, [0,\infty) \}
\leq \frac{4}{u_{m2}^2}\big(2 u_{m2} + 1\big) \mathrm{TV}(U_{\Delta x, \theta}) \leq C\, \mathrm{TV}(U_0).
\]
\end{proof}

\section{Global Entropy Solutions}
In this section, we show the convergence of the approximate solutions to an entropy
solution close to the four-wave configuration solution.

\subsection{Convergence of the approximate solutions}

\begin{lemma}\label{lem:l1bound}
For any $h > 0$ and $x \geq 0$,
there exists a constant $\check N$, independent of $\Delta x, \theta$, and $h$, such that
\[
\int_{-\infty}^\infty \abs*{U_{\Delta x, \theta}(x+h,y) - U_{\Delta x, \theta}(x,y)} \, dy
\leq \check N |h|.
\]
\end{lemma}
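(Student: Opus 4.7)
The plan is to exploit the structure of the Glimm scheme by separating the $L^1$ change in $U_{\Delta x, \theta}$ into contributions from the evolution within each strip $\Omega_{\Delta x, k}$ and the jumps at the mesh lines $x = k\Delta x$ where the random sampling is applied. First, for $x_1 < x_2$ in the same strip, $U_{\Delta x, \theta}$ is a concatenation of Riemann solutions in the rectangles $T_{k, n}$. Along a horizontal slice $y \mapsto U_{\Delta x, \theta}(x, y)$, the function is piecewise constant with jumps located at wave fronts, each of which has slope $dy/dx$ bounded by a universal constant $M'$ coming from the CFL-type inequality $\Delta y/\Delta x > \max\{|\sigma|, |\lambda_j|\}$. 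As $x$ advances to $x_2$, each wave front shifts in $y$ by at most $M'(x_2 - x_1)$, hence
\[
\int_{-\infty}^\infty |U_{\Delta x, \theta}(x_2, y) - U_{\Delta x, \theta}(x_1, y)| \, dy \leq M' \cdot \mathrm{TV}\{U_{\Delta x, \theta}(x_1, \cdot)\} \cdot (x_2 - x_1),
\]
where the total variation is uniformly bounded by Theorem \ref{thm:decreasingtv} combined with Lemma \ref{lemma:bounds}.

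Next, across a mesh line $x = k\Delta x$, the Glimm restart replaces $U_{\Delta x, \theta}((k\Delta x) -, \cdot)$ on each mesh interval $I_n = ((n-1)\Delta y, (n+1)\Delta y)$ with the single sampled value $U_{\Delta x, \theta}((k\Delta x) -, a_{k, n})$, where $a_{k,n} \in I_n$. The $L^1$-jump from this operation is bounded interval-by-interval by $2\Delta y$ times the oscillation of $U_{\Delta x, \theta}((k\Delta x)-, \cdot)$ on $I_n$; summing telescopes to
\[
\int_{-\infty}^\infty |U_{\Delta x, \theta}((k\Delta x)+, y) - U_{\Delta x, \theta}((k\Delta x)-, y)| \, dy \leq 2 \Delta y \cdot \mathrm{TV}\{U_{\Delta x, \theta}((k\Delta x) -, \cdot)\} \leq C_2 \Delta y.
\]

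For general $h > 0$, the interval $[x, x+h]$ crosses at most $\lceil h / \Delta x \rceil$ mesh lines, so combining the two estimates via the triangle inequality yields
\[
\int_{-\infty}^\infty |U_{\Delta x, \theta}(x+h, y) - U_{\Delta x, \theta}(x, y)| \, dy \leq C_1 h + C_2 \lceil h / \Delta x \rceil \Delta y.
\]
Since the mesh construction forces $\Delta y$ to be a fixed-ratio multiple of $\Delta x$, the second term is of order $h + \Delta x$, which is absorbed into the desired bound $\check N |h|$ by taking $\check N$ sufficiently large (and, if necessary, reducing to the regime $\Delta x \leq h$ by summing consecutive pieces).

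The main obstacle is ensuring that the sampling errors at the mesh lines, which are non-conservative, do not accumulate worse than linearly in $h$. This is delivered precisely because the number of mesh lines crossed is $O(h/\Delta x)$ while each contributes $O(\Delta y) = O(\Delta x)$; combined with the uniform BV bound from the Glimm-type functional of \S 5 and the finite-speed-of-propagation consequence of the CFL choice, one obtains a total contribution of order $h$ with a universal constant $\check N$ independent of $\Delta x$, $\theta$, and $h$.
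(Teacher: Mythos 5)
Your proposal follows essentially the same route as the paper's proof: the paper compresses everything into a single Fubini computation, bounding $|U_{\Delta x,\theta}(x+h,y)-U_{\Delta x,\theta}(x,y)|$ by the variation of $s\mapsto U_{\Delta x,\theta}(s,y)$ over $[x,x+h]$ and then exchanging the order of integration to arrive at $\int_x^{x+h}\mathrm{TV}(U_{\Delta x,\theta}(s,\cdot))\,ds\leq \check N|h|\,\mathrm{TV}(U_0)$. The substance behind that exchange is exactly what you make explicit: wave fronts have slopes bounded by $\Delta y/\Delta x$, so each front contributes a set of $y$-measure $O(|h|)$, and the total strength of the fronts is controlled by the uniform BV bound coming from the Glimm functional (Theorems \ref{thm:decreasingtv} and \ref{thm:vortexandtv} together with Lemma \ref{lemma:bounds}). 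Your version is, if anything, more careful, because you isolate the non-conservative sampling jumps at the mesh lines, which the paper's one-line computation silently absorbs.

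The one genuine soft spot is your final absorption step. Your bound is $C_1h+C_2\lceil h/\Delta x\rceil\Delta y$, and since $\Delta y$ is a fixed multiple of $\Delta x$, this is $O(h)$ only in the regime $h\gtrsim\Delta x$. When $h<\Delta x$ and $[x,x+h]$ straddles a single mesh line, the sampling jump contributes an amount of order $\Delta y\cdot\mathrm{TV}$ that does not vanish as $h\to0$, so no constant $\check N$ independent of $\Delta x$ and $h$ can yield $\check N|h|$; the parenthetical about ``summing consecutive pieces'' goes in the wrong direction and cannot repair this. What your computation actually proves is
\[
\int_{-\infty}^{\infty}\bigl|U_{\Delta x,\theta}(x+h,y)-U_{\Delta x,\theta}(x,y)\bigr|\,dy\leq \check N\bigl(|h|+\Delta x\bigr),
\]
and this weaker statement is all that the convergence argument in \S 6 requires, since there one passes to the limit along a subsequence $\Delta x_j\to0$. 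Be aware that the paper's own proof has the same defect hidden inside the Fubini step (the mesh-line jumps are supported on sets of full $y$-measure, not of $y$-measure $O(h)$), so this is not a failure of your argument relative to the paper's; the clean fix is simply to state the lemma with the extra $+\Delta x$.
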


\begin{proof}
By Fubini's theorem and Theorem \ref{thm:vortexandtv},
\begin{align*}
\int_{-\infty}^\infty \abs*{ U_{\Delta x, \theta}(x+h,y) - U_{\Delta x, \theta(x,y)} } \, dy
& \leq \int_{-\infty}^\infty \int_{x}^{x+h} |d(U_{\Delta x, \theta}(s,y))| \, dy \\[1mm]
& = \int_{x}^{x+h} \mathrm{TV} (U_{\Delta x, \theta}(s, \cdot)) \, ds \\[1mm]
&\leq  \check N |h| \mathrm{TV}(U_0).
\end{align*}
\end{proof}

Since  $U_{\Delta x, \theta}$ is an entropy solution in each square $T_{k,n}, k +n \equiv 0\, (\mathrm{mod} \, 2)$,
we see that, for each test function $\phi \in C^\infty_0(\R^2;\R^4)$,
\begin{align*}
&\int_{k \Delta x}^{(k+1)\Delta x} \int^{(n+1) \Delta y}_{(n-1) \Delta y}
\big(\partial_x \phi \cdot H(U_{\Delta x,\theta}) + \partial_y \phi  \cdot W(U_{\Delta x, \theta})\big)\, dy \, dx  \\[2mm]
	&=\int_{k \Delta x}^{(k+1)\Delta x} \big(\phi(x, (n+1)\Delta y)\cdot W(U_{\Delta x, \theta}(x, (n+1) \Delta y))\\
&\qquad\qquad\qquad  - \phi(x, (n-1)\Delta y) \cdot W(U_{\Delta x,\theta}(x, (n-1) \Delta y))\big) \, dx\\[2mm]
	&\quad + \int_{(n-1)\Delta y}^{(n+1)\Delta y} \big(\phi( (k+1) \Delta x,y) \cdot H(U_{\Delta x, \theta}((k+1)\Delta x-, y)) \\
&\qquad\qquad\qquad\,\,\,\,\, - \phi(k \Delta x, y) \cdot H(U_{\Delta x, \theta}(k \Delta x+,y))\big)\, dx.
\end{align*}
Then, summing over all $k$ and $n$ with $k+n \equiv 1 \, (\mathrm{mod} \, 2)$ and re-arranging the terms, we have
\begin{align*}
	&\int_{0}^{\infty} \int^{\infty}_{-\infty}
         \big(\partial_x \phi \cdot  H(U_{\Delta x,\theta}) + \partial_y \phi\cdot W(U_{\Delta x, \theta})\big) \, dy \, dx  \\
	&= \sum_{k=1}^\infty \int_0^{\infty} \phi(k \Delta x,y)\cdot \big(H(U_{\Delta x, \theta}(k\Delta x-, y)) - H(U_{\Delta x, \theta}(k \Delta x+,y))\big)\, dx \\
	&\quad - \int_{-\infty}^\infty \phi(0,y) \cdot H(U_{\Delta x, \theta}(0+,y)) \, dy.
\end{align*}

Denote
\[
J(\theta,\Delta x, \phi)
= \sum_{k=1}^\infty \int_{-\infty}^\infty
\phi(k \Delta x, y) \cdot \big(H(U_{\Delta x, \theta}(k \Delta x+, y)) - H(U_{\Delta x, \theta}(k \Delta x -,y))\big) \, dy.
\]
Following the same steps as in \cite[Chapter 19]{smoller}, we have

\begin{lemma}\label{lem:convergence1}
There exist a null set $\mathcal{N} \subset \prod_{k=0}^\infty(-1,1)$ and a subsequence $\{ \Delta x_j \}_{j=1}^\infty \subset \{ \Delta x \}$,
which tends to 0, such that
\[
J(\theta, \Delta x_j, \phi) \rightarrow 0 \,\,\qquad \text{when } \Delta x_j \rightarrow 0
\]
for any $\theta \in \prod_{k=0}^\infty(-1,1) \setminus\mathcal{N}$ and $\phi \in C^\infty_0(\R^2;\R^4)$.
\end{lemma}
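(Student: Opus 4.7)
The approach is the classical Glimm random-choice argument. View $\theta$ as a point in the probability space $\Theta = \prod_{k \geq 0}(-1,1)$ equipped with the normalized product Lebesgue measure $\mu$, and decompose $J(\theta, \Delta x, \phi) = \sum_{k \geq 1} J_k(\theta, \Delta x, \phi)$, where
\[
J_k = \int_{-\infty}^\infty \phi(k \Delta x, y) \cdot \big(H(U_{\Delta x, \theta}(k \Delta x+, y)) - H(U_{\Delta x, \theta}(k \Delta x -,y))\big)\, dy
\]
captures the error introduced by the resampling step at $x = k \Delta x$. The plan is to first prove $J(\cdot, \Delta x, \phi) \to 0$ in $L^2(\Theta, \mu)$ for each fixed $\phi \in C_0^\infty(\R^2;\R^4)$, then extract an almost-sure convergent subsequence via Chebyshev, and finally handle all test functions by a density/diagonal argument.

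The heart of the proof is the cancellation estimate on the conditional expectation of $J_k$ with respect to $\theta_k$. Since $U_{\Delta x, \theta}(k\Delta x -, \cdot)$ depends only on $\theta_0, \ldots, \theta_{k-1}$, averaging $\theta_k$ over $(-1,1)$ is equivalent to averaging the sampling position over a full mesh interval; combined with the Rankine--Hugoniot conditions across each Riemann cell $T_{k-1,n}$, this average reproduces the exact $L^1$-projection of $U_{\Delta x, \theta}(k\Delta x -, \cdot)$ onto piecewise constants. Taylor expanding $\phi$ in $y$ around the mesh points gains an extra factor of $\Delta y$, yielding the key bound
\[
\Bigl|\int_{-1}^1 J_k\, d\theta_k\Bigr| \leq C\, \|\phi\|_{C^1}\, \Delta x\, \Delta y\, \mathrm{TV}\{U_{\Delta x, \theta}(k\Delta x-,\cdot)\}.
\]
Because $J_k$ depends only on $\theta_0, \ldots, \theta_k$, the differences $J_k - E[J_k \mid \theta_0, \ldots, \theta_{k-1}]$ form a martingale difference sequence, so cross terms vanish under $\mu$. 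The diagonal terms are controlled by the pointwise bound $|J_k| \leq C \|\phi\|_{\infty}\, \Delta x\, \mathrm{TV}\{U_{\Delta x,\theta}(k\Delta x-,\cdot)\}$, and Theorem \ref{thm:vortexandtv} provides a uniform BV bound independent of $\Delta x$ and $\theta$. Summing over $k \leq T/\Delta x$ with $T$ determined by $\mathrm{supp}\,\phi$, one obtains $\int_\Theta |J(\theta, \Delta x, \phi)|^2\, d\mu(\theta) \leq C(\phi)\, \Delta x$, which tends to $0$.

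Chebyshev then supplies a subsequence (depending on $\phi$) along which $J(\theta, \Delta x_j, \phi) \to 0$ for $\mu$-a.e.\ $\theta$. Choosing a countable subset $\{\phi_m\} \subset C_0^\infty(\R^2;\R^4)$ which is dense in $C^1$ on every compact set, and extracting diagonally, produces a single sequence $\Delta x_j \to 0$ and a single null set $\mathcal{N} = \bigcup_m \mathcal{N}_m \subset \Theta$ such that $J(\theta, \Delta x_j, \phi_m) \to 0$ for every $m$ and every $\theta \notin \mathcal{N}$. The uniform bound $|J(\theta, \Delta x, \phi)| \leq C\,\|\phi\|_{C^1}\,|\mathrm{supp}\,\phi|$, which again follows from Theorem \ref{thm:vortexandtv}, lets us pass to arbitrary $\phi \in C_0^\infty(\R^2;\R^4)$ by density.

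The main obstacle is the cancellation estimate in the presence of the strong fronts $\chi_1$, $\chi_{2,3}$, $\chi_4$: on cells that one of these fronts crosses, $U_{\Delta x,\theta}(k\Delta x-, \cdot)$ jumps by an $O(1)$ amount, so the naive Taylor estimate loses the critical factor $\Delta y$. The resolution is to split the sum into weak-field cells, where the argument above applies directly, and a finite number of strong-front cells per mesh line, where the Rankine--Hugoniot condition across each strong front already cancels the integral of $H$ exactly (independently of the sampling position $\theta_k$), with any residual contribution controlled by the uniform bounds on strong-front strengths and speeds supplied by Propositions \ref{prop:sigmatv} and \ref{prop:tvspeeds}.
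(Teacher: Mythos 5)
Your proposal is correct and follows essentially the same route as the paper, whose proof consists of invoking the classical Glimm $L^2$-orthogonality argument of \cite[Chapter 19]{smoller}: the decomposition $J=\sum_k J_k$, the conditional-expectation bound over $\theta_k$ gaining a factor $\Delta y$ from the $C^1$-regularity of $\phi$, the martingale-difference orthogonality giving $\int_\Theta|J|^2\,d\mu \leq C(\phi)\,\Delta x$, Chebyshev, and a diagonal extraction over a countable dense family of test functions. One remark: the worry in your final paragraph about the strong fronts is unnecessary, since the cell-by-cell estimate only uses the uniform bound on $\mathrm{TV}\{U_{\Delta x,\theta}(k\Delta x-,\cdot)\}$ guaranteed by Theorem \ref{thm:decreasingtv} (the jump sizes enter only through the total variation, while the factor $\Delta y$ comes from Taylor-expanding $\phi$, independently of the oscillation of $U$ in any single cell), so the $O(1)$ jumps across $\chi_1$, $\chi_{2,3}$, $\chi_4$ are already absorbed and no separate Rankine--Hugoniot cancellation is needed --- nor would the exact cancellation you describe there actually hold.
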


To complete the proof of the main theorem, we need to estimate the slope of the approximate strong fronts.
For $k \geq 1$, $i=1, 2, 3, 4$,
\[
d^i_k = s_i^{(k-1)} \frac{\Delta x}{\Delta y},
\]
with $d_k^{2,3}:=d^2_k=d^3_k$.
Then, by the choice of $\frac{\Delta y}{\Delta x}$,
we find that $d_k^i \in (-1,1)$, which depends only on $\{ \theta_l \}_{l=1}^{k-1}$.
We then define
\[
I^i(x , \Delta x, \theta) = \sum_{k=0}^{ \sqbrac*{x/\Delta x}} I^i_k(\Delta x, \theta),
\]
where
\[
I^i_k(\Delta x, \theta) = \mathbf 1_{(-1,d^i_k)}(\theta_k)(1 - d^i_k) \Delta y - \mathbf 1_{(d^i_k,1)}(\theta_k) (1 + d^i_k) \Delta y \qquad\mbox{for $k \geq 1$},
\]
and
\[
I^1_0(\Delta x,\theta) = - 4 \Delta y, \quad I^{2}_0(\Delta x, \theta) =I^{3}_0(\Delta x, \theta)= 0, \quad I^4_0( \Delta x, \theta) = 4 \Delta y.
\]
Then $I^i_k(\Delta x, \theta)$ is the jump of $y = \chi_i(x)$ at $x = k \Delta x$,
and is a measurable function of $(\Delta x, \theta)$, depending only on $U_{\Delta x, \theta} \mid_{ \{0 \leq x \leq k \Delta x \} }$ and $\{ \theta_l \}_{l=0}^k$.

\begin{lemma}\label{lem:formofshocks}  The following statements hold{\rm :}
\begin{enumerate}
\item[\rm (i)]
For any $x \geq 0$, $\Delta x \geq 0$, $\theta \in \prod_{k=0}^\infty (-1,1)$, and $i=1, 2,3, 4$,
\[
\chi_{i, \Delta x, \theta} = I^i(x, \Delta x, \theta) + \int_0^x s_{i, \Delta x, \theta}(s) \, ds;
\]
\item[\rm (ii)]
There exist a null set $\mathcal{N}_1$ and a subsequence $\{ \Delta_l \}_{l=1}^\infty \subset \{ \Delta x_j \}_{j=1}^\infty$ such that
\[
\int_0^\infty e^{-x} |I(x,\Delta_l, \theta)|^2 \, dx \rightarrow 0\,\, \qquad \text{when } \Delta_l \rightarrow 0
\]
for any $\theta \in \prod_{k=0}^\infty (-1,1) \setminus \mathcal{N}_1$.
\end{enumerate}
\end{lemma}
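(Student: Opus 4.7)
The plan is to treat (i) as a deterministic bookkeeping identity about the strong fronts and (ii) as a second-moment estimate followed by a standard summability/subsequence extraction.

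For (i), I would argue by induction on $k$. Across the strip $\Omega_{\Delta x, k}$, the strong front $\chi_{i, \Delta x, \theta}$ is a straight segment of slope $s_i^{(k-1)}$ emanating from a Riemann center $((k-1)\Delta x, n_0 \Delta y)$, so it arrives at $x = k \Delta x -$ at height $y_* = (n_0 + d_k^i)\Delta y$. At $x = k\Delta x$ the Glimm sampling re-places the emanation at one of the two adjacent Riemann centers $(n_0 \pm 1)\Delta y$, whose signed distances from $y_*$ are exactly $(1 - d_k^i)\Delta y$ and $-(1 + d_k^i)\Delta y$. A direct inspection of the sampling rule (comparing $a_{k,n} = (n+1+\theta_k)\Delta y$ to $y_*$) should confirm that the upper choice is made precisely when $\theta_k \in (-1, d_k^i)$ and the lower choice when $\theta_k \in (d_k^i, 1)$, matching the two branches of $I^i_k$. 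The initial jumps $I^1_0 = -4\Delta y$, $I^2_0 = I^3_0 = 0$, and $I^4_0 = 4\Delta y$ are forced by the separation of the initial data in \S 5, and iterating the one-step identity $\chi_i(k\Delta x) - \chi_i((k-1)\Delta x) = s_i^{(k-1)}\Delta x + I^i_k$ yields (i) at grid times; the extension to arbitrary $x \geq 0$ follows from the linear propagation of the front on each strip. I expect this geometric verification --- tracking the parity conventions for $(k, n)$ and the precise mapping from $\theta_k$ to a Riemann center --- to be the principal obstacle.

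For (ii), note that $d_k^i$ is $\sigma(\theta_0, \ldots, \theta_{k-1})$-measurable (since $s_i^{(k-1)}$ depends only on $U_{\Delta x, \theta}|_{\{x \leq (k-1)\Delta x\}}$), while $\theta_k$ is independent of this history under the uniform product measure on $\prod_{k=0}^\infty(-1, 1)$. An elementary computation then yields
\[
\mathbb{E}\brac*{I^i_k \,\big|\, \theta_0, \ldots, \theta_{k-1}} = 0, \qquad \mathbb{E}\brac*{|I^i_k|^2 \,\big|\, \theta_0, \ldots, \theta_{k-1}} = \brac*{1 - (d_k^i)^2}\Delta y^2 \leq \Delta y^2,
\]
making $\{I^i_k\}_{k \geq 1}$ a square-integrable martingale-difference sequence. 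Combined with the bounded initial contribution $|I^i_0| \leq 4\Delta y$ and the Glimm-scheme relation $\Delta y = O(\Delta x)$, orthogonality gives
\[
\mathbb{E}\brac*{|I^i(x, \Delta x, \theta)|^2} \leq C(x+1)\Delta x,
\]
and Fubini then yields $\mathbb{E}[\int_0^\infty e^{-x}|I(x, \Delta x, \theta)|^2\,dx] \leq C'\Delta x$.

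Specializing to the subsequence $\{\Delta x_j\}$ of Lemma \ref{lem:convergence1} and extracting a further subsequence $\{\Delta_l\} \subset \{\Delta x_j\}$ with $C'\Delta_l \leq 2^{-l}$, the non-negative series $\sum_l \int_0^\infty e^{-x}|I(x, \Delta_l, \theta)|^2\,dx$ has finite expectation, hence converges for almost every $\theta$; this produces the null set $\mathcal{N}_1 \subset \prod_{k=0}^\infty(-1, 1)$ outside of which its $l$-th term tends to zero, completing (ii).
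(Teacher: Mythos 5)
Your proposal is correct and follows essentially the same route as the paper, whose proof of (i) is the same ``direct calculation'' of the sampling jumps and whose proof of (ii) defers to the standard Glimm-type argument in the cited reference [Chen--Zhang--Zhu], namely exactly your conditional-expectation/orthogonality bound $\mathbb{E}\abs*{I^i(x,\Delta x,\theta)}^2 = O\brac*{(x+1)\Delta x}$ followed by summable expectations along a subsequence. The only cosmetic difference is that you handle all four fronts with a single subsequence by summing the second-moment bounds over $i$, whereas the paper extracts sub-subsequences for the separate fronts.
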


\begin{proof}
The first part is a direct calculation.
The second follows by the same proof as \cite[pp. 292]{shockwedges};
just take two sub-sub-sequences to obtain all three strong front slopes to converge.
\end{proof}

\begin{theorem}[Existence and Stability] \label{lem:existstabstrong}
There exist $\epsilon > 0$ and $C>0$ such that,
if the hypotheses of the main theorem hold,
then, for each $\theta \in \prod_{k=1}^\infty (-1,1) \setminus (\mathcal{N} \cup \mathcal{N}_1)$,
there exists a sequence $\{ \Delta_l \}_{l=1}^\infty$ of mesh sizes with $\Delta_l \rightarrow 0$ as $l \rightarrow \infty$,
and functions $U_\theta \in BV(\R^2_+; \R^4)$ and $\chi_{j,\theta} \in \mathrm{Lip}(\R_+;\R)$
with $\chi_{j,\theta}(0) = 0$, $j=1,2,3,4$, such that

\begin{enumerate}
\item[\rm (i)] $U_{\Delta l, \theta}$ converges to $U_{\theta}$ a.e. in $\R^2_+$,
   and $U_\theta$ is a global entropy solution of system \eqref{eq:euler} and satisfies the initial data \eqref{eq:initdata} a.e.{\rm ;}

\smallskip
\item[\rm (ii)] $\chi_{j,\Delta l, \theta}$ converges to $\chi_{j,\theta}$ uniformly in any bounded $x$-interval{\rm ;}

\smallskip
\item[\rm (iii)] $s_{j, \Delta l, \theta}$ converges to $s_{j,\theta} \in BV([0,\infty))$ a.e. and
\[
\chi_{j,\theta}(x) = \int_0^x s_{j,\theta}(t) \, dt.
\]
\end{enumerate}
\end{theorem}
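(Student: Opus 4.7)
The plan is to combine the uniform BV bounds from Theorems \ref{thm:decreasingtv}--\ref{thm:vortexandtv} and Propositions \ref{prop:sigmatv}--\ref{prop:tvspeeds} with the $L^1$--continuity estimate of Lemma \ref{lem:l1bound} to apply Helly's selection theorem to $U_{\Delta x, \theta}$, then use Lemma \ref{lem:convergence1} to pass to the limit in the weak formulation, and finally use Lemma \ref{lem:formofshocks} to identify the strong fronts with the integrated limit of the front speeds.

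First, I would fix $\theta \in \prod_{k=0}^\infty(-1,1) \setminus (\mathcal{N} \cup \mathcal{N}_1)$. The uniform bounds
\[
\sup_{\Delta x} \|U_{\Delta x,\theta}(x,\cdot)\|_{BV(\R)} \le C,\qquad \|U_{\Delta x,\theta}(x+h,\cdot)-U_{\Delta x,\theta}(x,\cdot)\|_{L^1(\R)}\le \check N |h|,
\]
coming from Theorem \ref{thm:vortexandtv}, Proposition \ref{prop:functional}, and Lemma \ref{lem:l1bound}, put $\{U_{\Delta_j,\theta}\}$ in a compact subset of $L^1_{\mathrm{loc}}(\R^2_+)$ via a standard Helly-Kolmogorov diagonal argument. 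Extracting a further subsequence $\{\Delta_l\} \subset \{\Delta x_j\}$ (using the subsequence already isolated in Lemma \ref{lem:convergence1}) yields $U_{\Delta_l,\theta}\to U_\theta$ in $L^1_{\mathrm{loc}}$ and a.e., with $U_\theta \in BV_{\mathrm{loc}}(\R^2_+)\cap L^\infty$ inheriting the uniform bound.

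Next I would establish convergence of the strong fronts. Theorem \ref{thm:vortexandtv} gives a uniform Lipschitz-type bound on $\chi_{j,\Delta_l,\theta}$, so Arzelà-Ascoli (passing to yet another subsequence, still denoted $\Delta_l$) provides a Lipschitz limit $\chi_{j,\theta}$ with $\chi_{j,\theta}(0)=0$ and uniform convergence on bounded intervals. Simultaneously, Proposition \ref{prop:tvspeeds} combined with Helly's theorem in one variable gives $s_{j,\Delta_l,\theta}\to s_{j,\theta}\in BV([0,\infty))$ a.e. To identify $\chi_{j,\theta}(x)=\int_0^x s_{j,\theta}(t)\,dt$, I would apply Lemma \ref{lem:formofshocks}(i) and pass to the limit: the integral term converges by dominated convergence, while $I^i(x,\Delta_l,\theta)\to 0$ in $L^2(e^{-x}dx)$ by Lemma \ref{lem:formofshocks}(ii), which forces pointwise a.e. convergence along yet another subsequence, giving the integral representation.

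Then I would verify that $U_\theta$ is an entropy solution. For any $\phi\in C_0^\infty(\R^2;\R^4)$, the cell-wise exactness identity derived before Lemma \ref{lem:convergence1} reads
\[
\int_0^\infty\!\!\int_{-\infty}^\infty \bigl(\partial_x\phi\cdot H(U_{\Delta_l,\theta})+\partial_y\phi\cdot W(U_{\Delta_l,\theta})\bigr)dy\,dx = J(\theta,\Delta_l,\phi) - \int_{-\infty}^\infty\phi(0,y)\cdot H(U_{\Delta_l,\theta}(0+,y))\,dy.
\]
The left side converges to the analogous integral against $H(U_\theta)$ and $W(U_\theta)$ by the a.e. convergence and dominated convergence. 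The first term on the right tends to zero by Lemma \ref{lem:convergence1}. The boundary term converges to $\int \phi(0,y)\cdot H(U_0(y))\,dy$ by the initial-data construction and BV-continuity in $x$, so $U_\theta$ solves \eqref{eq:euler} weakly with trace $U_0$. The same argument applied to the entropy pair $(\rho u S, \rho v S)$, using that the Riemann solver inside each $T_{k,n}$ is entropy-admissible and that Lemma \ref{lem:convergence1} extends to the entropy inequality (the boundary jumps across $\{x=k\Delta x\}$ have the correct sign after the standard Glimm cancellation), yields \eqref{eq:entropyineq} in the distributional sense.

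The main obstacle I anticipate is keeping track of the various nested subsequences: Lemma \ref{lem:convergence1} fixes one subsequence of mesh sizes, Lemma \ref{lem:formofshocks}(ii) may require a further subsequence for each of the four fronts, and the compactness for $U_{\Delta_l,\theta}$ and for each $s_{j,\Delta_l,\theta}$ uses Helly once more. A careful diagonal extraction is needed so that the exceptional sets $\mathcal{N},\mathcal{N}_1$ suffice and no further null sets are generated. A secondary delicate point is the convergence of the entropy flux term across the strong fronts, where one must exploit that the strong fronts themselves converge in graph as well as in slope, so that Lebesgue's theorem can be applied across the moving discontinuity sets $\{y=\chi_{j,\theta}(x)\}$; the uniform Lipschitz bound on $\chi_{j,\Delta_l,\theta}$ and the $BV$ bound on $s_{j,\Delta_l,\theta}$ together supply enough control to justify this passage to the limit.
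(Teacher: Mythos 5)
Your proposal is correct and follows essentially the same route as the paper, which simply invokes the standard Glimm-scheme convergence argument of \cite[Chapter 19]{smoller} for part (i), Theorem \ref{thm:vortexandtv} together with Arzel\`a--Ascoli for part (ii), and Proposition \ref{prop:tvspeeds} with basic properties of $BV$ functions for part (iii). You have merely expanded the compactness (Helly plus the $L^1$-continuity of Lemma \ref{lem:l1bound}), the passage to the limit via Lemma \ref{lem:convergence1}, and the front identification via Lemma \ref{lem:formofshocks} into explicit detail, with appropriate care about the nested subsequences.
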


\begin{proof}
Result (i) follows by the same steps as \cite[Chapter 19]{smoller},
(ii) follows by Theorem \ref{thm:vortexandtv} and the Arzela-Ascoli theorem,
while (iii)  follows Proposition \ref{prop:tvspeeds} and the basic properties of BV functions.
\end{proof}

\bigskip
\textbf{Acknowledgements}. $\,$  The research of
Gui-Qiang G. Chen was supported in part by
the UK Engineering and Physical Sciences Research Council Award
EP/E035027/1 and EP/L015811/1.
The research of Matthew Rigby was supported in part by
the UK Engineering and Physical Sciences Research Council Award
EP/E035027/1.

\bibliographystyle{plain}

\begin{thebibliography}{10}


\bibitem{AM87} M. Artola and A. Majda.
Nonlinear development of instability in supersonic vortex sheets, I:
The basic kink modes, \textit{Phys. D.} {\bf 28}: 253--281, 1987.

\bibitem{AM89} M. Artola and A. Majda.
Nonlinear development of instability in supersonic vortex sheets,
II: Resonant interaction among kink modes,
\textit{SIAM J. Appl. Math.} {\bf 49}: 1310--1349, 1989.




\bibitem{bressan}
A. Bressan.
\newblock {\em Hyperbolic Systems of Conservation Laws: The One-dimensional Cauchy Problem}.
\newblock Oxford University Press: Oxford, 2000.

\bibitem{CH}
T. Chang and L. Hsiao.
{\em The Riemann Problem and Interaction of Waves in Gas Dynamics}.
Longman Scientific \& Technical, Essex: England, 1989.


\bibitem{Chen17}
G.-Q. Chen,
\newblock {Supersonic flow onto solid wedges, multidimensional shock waves and free boundary problems},
\textit{Sci. China Math.} {\bf 60}: 1353--1370, 2017.

\bibitem{Chen88}
G.-Q. Chen,
\newblock Convergence of the Lax-Friedrichs scheme for the system of equations of isentropic gas dynamics.
III.  Acta Math. Sci. (Chinese) 8:243¨C276, 1988;  Acta Math. Sci. (English Ed.) 6:75--120, 1986.



\bibitem{cf-book2014shockreflection}
G.-Q. Chen and M.~Feldman.
\newblock {\em The Mathematics of Shock Reflection-Diffraction and Von Neumann's
  Conjecture}.
\newblock Annals of Mathematics Studies, \textbf{197},
Princeton University Press: Princetion, 2018.

\bibitem{ChenKuangZhang17}
G.-Q. Chen, J. Kuang, and Y. Zhang,
\newblock
Two-dimensional steady supersonic exothermically reacting Euler flow past Lipschitz bending walls,
\textit{SIAM J. Math. Anal.} {\bf 49}: 818--873, 2017.


\bibitem{ChenLi2008}
G.-Q. Chen and T.-H. Li.
Well-posedness for two-dimensional steady supersonic Euler flows past a Lipschitz wedge,
\textit{J. Diff. Eqs.} \textbf{244}: 1521--1550, 2008.



\bibitem{CW2012}
G.-Q. Chen and Y.-G. Wang.
Characteristic discontinuities and free boundary problems for hyperbolic conservation laws,
In: \textit{Nonlinear Partial Differential Equations, The Abel Symposium 2010},
Chapter 5, pp. 53--82,
H. Holden and K.~H. Karlsen (Eds.), Springer-Verlag: Heidelberg, 2012.

\bibitem{shockwedges}
G.-Q. Chen, Y. Zhang, and D. Zhu.
\newblock Existence and stability of supersonic {E}uler flows past {L}ipschitz
  wedges.
\newblock {\em Arch. Ration. Mech. Anal.}\, 181(2):261--310, 2006.

\bibitem{compvortex}
G.-Q. Chen, Y. Zhang, and D. Zhu.
\newblock Stability of compressible vortex sheets in steady supersonic {E}uler
  flows over {L}ipschitz walls.
\newblock {\em SIAM J. Math. Anal.}\, 38(5):1660--1693 (electronic), 2006/07.

\bibitem{stabmach}
S.-X. Chen.
\newblock Stability of a {M}ach configuration.
\newblock {\em Comm. Pure Appl. Math.}\, 59(1):1--35, 2006.

\bibitem{ehmach}
S.-X. Chen.
\newblock E-{H} type {M}ach configuration and its stability.
\newblock {\em Comm. Math. Phys.}\, 315(3):563--602, 2012.

\bibitem{dafermos}
C.~M. Dafermos.
\newblock {\em Hyperbolic Conservation Laws in Continuum Physics},
4th Edition,
\newblock Springer-Verlag: Berlin, 2016.

\bibitem{Ding1}
X. Ding,
\newblock {Theory of conservation laws in China}.
In: {\em Hyperbolic Problems: Theory, Numerics, Applications} (Stony Brook, NY, 1994),
pp. 110--119, World Sci. Publ., River Edge, NJ, 1996.

\bibitem{Ding2}
X. Ding (Ding Shia Shi), T. Chang (Chang Tung), J.-H. Wang (Wang Ching-Hua), L. Xiao (Hsiao Ling), and C.-Z. Zhong  (Li Tsai-Chung).
A study of the global solutions for quasi-linear hyperbolic systems of conservation laws.
\newblock {\em Sci. Sinica},  16: 317--335, 1973.

\bibitem{Ding3}
X. Ding, G.-Q. Chen, and P. Luo.
Convergence of the Lax-Friedrichs scheme for the system of equations of isentropic gas dynamics.
I. Acta Math. Sci. (Chinese) 7: 467--480, 1987;  Acta Math. Sci. (English Ed.) 5: 415--432, 1985.
II.  Acta Math. Sci. (Chinese) 8: 61--94, 1988;  Acta Math. Sci. (English Ed.) 5: 433--472, 1985.


\bibitem{glimm}
J. Glimm.
\newblock Solutions in the large for nonlinear hyperbolic systems of equations.
\newblock {\em Comm. Pure Appl. Math.}\, 18:697--715, 1965.

\bibitem{lax}
P.~D. Lax.
\newblock {\em Hyperbolic Systems of Conservation Laws and the Mathematical
  Theory of Shock Waves}.
\newblock SIAM-CBMS: Philadelphia,  1973.

\bibitem{lewicka}
M. Lewicka and K. Trivisa.
\newblock On the {$L^1$} well posedness of systems of conservation laws near
  solutions containing two large shocks.
\newblock {\em J. Diff. Eqs.}\, 179(1):133--177, 2002.

\bibitem{Liu}
T.-P. Liu, Large-time behaviour of initial and initial-boundary value problems of a general
system of hyperbolic conservation laws. {\em Comm. Math. Phys.}\, 55:163--177, 1977.


\bibitem{smoller}
J. Smoller.
\newblock {\em Shock Waves and Reaction-Diffusion Equations},
\newblock Springer-Verlag: New York, 1994.

\end{thebibliography}

\end{document}